 \newlength\headseptemp
\numberwithin{equation}{section}    
\theoremstyle{plain}
\newtheorem{Theorem}{Theorem}[section]
\newtheorem{Proposition}[Theorem]{Proposition}
\newtheorem{Corollary}[Theorem]{Corollary}
\newtheorem{Lemma}[Theorem]{Lemma}
\theoremstyle{definition}
\newtheorem{Definition}[Theorem]{Definition}
\theoremstyle{remark}
\newtheorem{Remark}[Theorem]{Remark}
\newcommand{\RR}{\mathbb{R}}
\newcommand{\CC}{\mathbb{C}}
\newcommand{\NN}{\mathbb{N}}
\newcommand{\ZZ}{\mathbb{Z}}
\newcommand{\word}{\mathcal{W}}
\newcommand{\graph}{\mbox{gr}}
\newcommand{\Graph}{\mbox{Gr}}
\begin{document}

\title[Spectra of Schreier graphs of Grigorchuk's group]{Spectra of Schreier graphs of Grigorchuk's group and Schroedinger
operators with aperiodic order}

\author{Rostislav Grigorchuk}
\address{Mathematics Department, Texas A\&M University, College Station, TX 77843-3368, USA}
\email{grigorch@math.tamu.edu}

\author{Daniel Lenz}
\address{Mathematisches Institut \\Friedrich Schiller
Universit{\"a}t Jena \\07743 Jena, Germany }
\email{daniel.lenz@uni-jena.de}

\author{Tatiana Nagnibeda}
\address{Section de Math\'{e}matiques, Universit\'{e} de Gen\`{e}ve, 2-4, Rue du
Li\`{e}vre, Case Postale 64 1211 Gen\`{e}ve 4, Suisse}
\email{Tatiana.Smirnova-Nagnibeda@unige.ch}

\keywords{Substitutional subshift, self-similar group, Schreier
graph, Laplacian,  spectrum of Schroedinger operators}
\date{\today}

\begin{abstract}
We study spectral properties of the Laplacians on Schreier graphs
arising from  Grigorchuk's group acting on the boundary of the
infinite binary tree. We establish a connection between the action
of $G$ on its space of Schreier graphs and a subshift associated to
a non-primitive substitution and relate the Laplacians on the
Schreier graphs to discrete Schroedinger operators with aperiodic
order. We use this relation to prove that the spectrum of the
anisotropic Laplacians is a Cantor set of Lebesgue measure zero. We
also use it to show absence of eigenvalues both almost-surely and
for certain specific graphs. The methods developed here apply to a
large class of examples.
\end{abstract}

\maketitle



\unitlength=0,4mm \textwidth = 16.00cm \textheight = 22.00cm
\oddsidemargin= 0.12in \evensidemargin = 0.12in
\setlength{\parindent}{8pt} \setlength{\parskip}{5pt plus 2pt minus
1pt} \setloopdiam{10}\setprofcurve{7}

\section*{Introduction}
In this article we relate two previously unconnected areas. These
are   Schreier graphs of  self-similar groups and Schroedinger
operators associated to aperiodic order. This allows us to solve a
problem, which was open for some fifteen years, viz  the spectral
type problem for the Laplacians on Schreier graphs of the first
group of intermediate growth, for all possible weights attached to
the generators.

The first group of intermediate growth, introduced by the first
author in \cite{Gri80, Gri84}, is generally known as Grigorchuk's
group $G$ and this is how we will refer to it.\footnote{in spite of
the first author's reluctance}
The group $G$ can be viewed as a group of automorphisms of the full
infinite binary tree $\mathcal T$. The action by automorphisms on
the tree $\mathcal T$ extends by continuity to an action by
homeomorphisms on the boundary $\partial \mathcal T$ of the tree.
These actions give rise to Schreier graphs (see Section
\ref{Section-Schreier} for detailed  definition): for every $n\in
\NN$, a finite graph $\varGamma_n$ arises from the (transitive)
action on the $n$-th level of the tree; and  for every infinite ray
$\xi \in\partial \mathcal T$ an infinite graph $\varGamma_\xi$
arises from the orbit of $\xi$ under  the action of the group on the
boundary $\partial \mathcal T$. As Schreier graphs, these graphs
have edges labeled by the generators of the group in such a way
that each vertex has exactly one edge of each label incident with
it. The group $G$ naturally comes with a set of $4$ generators $\{a,b,c,d\}$ all of which are involutions, so in this case we can disregard the
orientation.

Thinking of  the labels $a,b,c,d$ as encoding some weights
 $t,u,v,w\in \RR$ on the edges of the graphs,  we obtain the  Laplacians
$M_n (t,u,v,w)$ and $M_\xi (t,u,v,w)$ acting on the $l^2$-space on
the vertex set of the corresponding graph. It is the spectral theory
of these Laplacians  that is meant by spectral theory of the
Schreier graphs of $G$. In the case when $t,u,v,w >0$ with $t + u +
v  + w = 1$ these operators are the Markov operators of the random
walk on these graphs with transition probabilities $t,u,v,w$.

Spectral theory of these Laplacians  was first studied in the
article \cite{BG}, and methods developed there were successfully
applied to various examples of self-similar groups
\cite{GS1,GS2,GN,GZ}. However, only the \lq\lq isotropic\rq\rq\ case
 with the values $u = v = w$ that imply periodicity of the operators $M_\xi (t,u,v,w)$, could be treated so far. The spectral problem in
the case of arbitrary weights remained open.

Here, we solve this case  by a new and completely different
approach. Our approach relies on the construction of a subshift
associated to a certain substitution intimately related to the
algebraic structure of the group. This substitution $$\kappa:
a\mapsto a c a, b\mapsto d, c\mapsto b, d \mapsto c$$ first appeared
in connection with Grigorchuk's group in the work of Lysenok
\cite{Lys}, where it was used to get a recursive presentation of $G$
by generators and relations:
$$G = \langle a,b,c,d \mid 1 = a^2 = b^2 = c^2 = d^2 = bcd =  \kappa^k ((ad)^4)
 =\kappa^k ((adacac)^4), k = 0,1,2,....\rangle .$$
It is remarkable that the substitution $\kappa$ serves not only to
define $G$ algebraically, but also, as will be shown here, to
describe its spectral properties and to determine $G$ in terms of
topological dynamics as a subgroup of the topological full group of
a minimal Cantor system.

For us it will be convenient to recode the substitution $\kappa$  on
a new alphabet arising by replacing the letters b,c,d by y,x,z. We
will call the corresponding substitution $\tau$. Our approach then
relies on constructing and studying the subshift
$(\varOmega_\tau,T)$ associated to $\tau$. This is done in Section
\ref{The-substitution}. There,  we also discuss the occurrences of
third powers in the subshift, which will be used later in our
discussion of absence of eigenvalues. Moreover, we also give a
fairly complete study of some  additional basic  properties of the
subshift, which are not necessary for the treatment of spectral
theory. In particular, we determine its index (which is four), and
show how the fixed point of the substitution can be generated by an
automaton.

The crucial main step of the paper, carried out in Section
\ref{Connecting}, is to relate the subshift $(\varOmega_\tau,T)$ to
the dynamical system $(X,G)$ (defined in Section
\ref{Section-Schreier}) arising from the action of the group on the
space of its Schreier graphs. More specifically, our main result,
Theorem \ref{Main-connection} shows that $\varOmega_\tau$ admits an
action of $G$ and
 there
exists a continuous surjective map
$$\psi : \varOmega_\tau \longrightarrow X$$
intertwining the respective actions of $G$. Hence,  $(X,G)$ is a
factor of $(\varOmega_\tau,G)$). Theorem  \ref{Main-connection} is
of interest in itself: for example, it implies that  $G$ is a
subgroup of the topological full group of the subshift defined by
the substitution, see Remark \ref{Connection-Bon}. In our context,
it allows us to show that there exists a family of Schroedinger
operators $(H_\omega)_{\omega\in\varOmega_\tau}$ associated to the
subshift such that  the operator $M_{\psi(\omega)} (t,u,v,w)$ is
unitarily equivalent to $H_\omega$ (Proposition
\ref{unitary-equivalence}). In this way the spectral problem for the
$M_x (t,u,v,w)$, $x\in X$,  is translated into a spectral problem
for the $H_\omega$, $\omega \in\varOmega_\tau$.

The operators $(H_\omega)$ belong to a class known as Schroedinger
operators with aperiodic order. Such operators have  been studied
intensely over the last thirty  years, see e.g. the surveys
\cite{Dam,DEG}, both for its remarkable properties and for its
connection to the so-called quasicrystals (see e.g.
\cite{BGr,BM,KLS,Moody} for further information on this topic).
Combining the  combinatorial information on  $(\varOmega_\tau,T)$
obtained in  Section \ref{The-substitution} with the general theory
of Schroedinger operators with aperiodic order we are able to
determine the spectral theory of the family $(H_\omega)$, and thus
also the spectral theory of the $M_x (t,u,v,w)$, $x\in X$, by the
established unitary equivalence. Our main results show that their
spectrum is a Cantor set of Lebesgue measure zero (Theorem
\ref{Main-Laplacian-Cantor}) and that the absence of eigenvalues
holds almost surely as well as for some specific points (Theorem
\ref{Main-Laplacian-Absence-eigenvalues}). This implies in
particular purely singular continuous spectrum almost surely and for
some specific points.

\smallskip

We choose, in this paper, to concentrate on one example,
Grigorchuk's group $G$, which is the most prominent example in the
theory of groups acting on rooted trees. (In fact, the theory grew
out of this example!) Our results extend readily to various families
of groups acting by automorphisms of regular rooted trees, provided
that the Schreier graphs of the induced action on the boundary of
the tree are linear. The most famous such case is given by  the
uncountable family $\{G_\omega\}_{\omega\in\{0,1,2\}^\Bbb N}$
constructed in \cite{Gri80}, in which the group $G$ corresponds to
the sequence $\omega = (012)^\infty$; but there are also other
families generalizing Grigorchuk's group $G$, see \cite{Sun, BDN,
Nek1}. To such a family of groups indexed by sequences over a
certain finite alphabet, a family of subshifts can be associated,
similarly to what is done here in the case of the group $G$, though
the subshift may not be defined by a substitution if the group in
question is not self-similar. Part of our results extend to all such
groups, others require some additional conditions depending on the
particular infinite sequence. An interesting new aspect that appears
in this more general study is how the corresponding dynamical and
spectral properties vary over the family. The details are to follow
in a forthcoming paper.

\textbf{Acknowledgments.} R. G. was partially supported by the NSF grant DMS-1207669 and by ERC AG COMPASP.
 The authors acknowledge support of the Swiss
National Science Foundation. Part of this research was carried out
while D. L. and R. G. were visiting the Department of mathematics of
the University of Geneva. The hospitality of the department is
gratefully acknowledged. The authors also thank Yaroslav Vorobets
for allowing them to use his figures $3$ and $4$. Finally, the
authors would like to thank the anonymous referee for a  careful
reading of the manuscript resulting in various helpful suggestions.

\section{Grigorchuk's group $G$,  its Schreier
graphs and the associated Laplacians}\label{Grigorchuk}
In this section we introduce the main object of our interest:
Grigorchuk's group $G$ and the Laplacians on the associated Schreier graphs.


\subsection{Grigorchuk's group G}
\label{Group-G}

Let us denote by $\mathcal T$ the \textit{rooted binary  tree}. The
vertex set of $\mathcal T$ is given by $\{0,1\}^\ast$, i.e. the set
of all words over the alphabet $\{0,1\}$. The root of $\mathcal T$
is the empty word. There is an edge between
 $v$ and  $w$ whenever $w = v  k$  or $v = w k$ holds
for some $k\in\{0, 1\}$. The words $w \in\{0,1\}^n$ constitute the
\textit{$n$-the level} of the tree. (In the tree, they are at
combinatorial distance exactly $n$ from the root.)
The set $\{0,1\}^\NN$   of one-sided  infinite words  can be
identified with the  boundary  $\partial \mathcal T $ of $\mathcal T
$ consisting of infinite geodesic rays in $\mathcal T$ emanating
from the root (i.e. infinite paths starting at the root all of whose
edges are pairwise different). When equipped with the product
topology  $\{0,1\}^\NN$ is a  compact space.

The central object of our study is  Grigorchuk's group $G$. It is
generated by four automorphisms $a,b,c,d$ of the rooted binary tree
defined on the vertices via
\begin{itemize}
\item[] $  a(0 w) = 1 w$,  $ a (1 w) = 0 w;$
\item[] $  b(0w) = 0 a (w)$, $  b (1 w) = 1 c(w);$
\item[] $  c(0w) = 0 a (w)$,  $c (1 w) = 1 d(w);$
\item[] $  d(0w) = 0 w$,  $  d(1w) = 1 b(w),$
\end{itemize}
where the vertex $w$ is an arbitrary  word over  $\{0,1\}$.
Grigorchuk's group is a self-similar group and the automorphisms can
also be expressed in the self-similar form as
$$
a=\epsilon(id,id), \qquad b=e(a,c), \qquad  c=e(a,d), \qquad
d=e(id,b),
$$
where $e$ and $\epsilon$ are, respectively, the trivial and the
non-trivial permutations in the group $Sym(2)$ and $id$ is the
identity acting on the tree. We refer the interested reader to
\cite{Nek,GNS} for more details and  information about self-similar
groups.

Observe that all the generators are involutions and that
$\{1,b,c,d\}$ commute and constitute a group isomorphic to   the
Klein group $\ZZ_2\times \ZZ_2$. Let as also mention that there are
many more relations and the group is not finitely presented.  For
our subsequent discussion it will be important that $G$ acts
\textit{transitively} on each level, i.e. for arbitrary words $w,u$
over $\{0,1\}$ with the same length there exists a $g\in G$ with $g
u = w$.

\subsection{The Schreier graphs of $G$ and the dynamical system
$(X,G)$}\label{Section-Schreier}
The action of  the group $G$   on the vertices of the rooted binary
tree and on its boundary  induces on these sets the structure of
\textit{Schreier graphs}, with respect to the generating set
$\{a,b,c,d\}\subset G$ (see Figure \ref{Schreier-finite}). These are
labeled graphs and we will freely use standard notation on such
graphs as discussed e.g. in \cite{Vor2}. 
Specifically, for $z\in V(\mathcal T) \cup
\partial \mathcal T$ the Schreier graph $\Gamma_z$ has as its
vertices the \textit{orbit} $G z$  of $z$ under the action of $G$
and there is an edge with label $s\in\{a,b,c,d\}$ and between the
vertices $x$ and $y$ if and only if $s x = y$ holds. Note that  the
edges can indeed be seen as undirected   as all generators
$\{a,b,c,d\}$ are involutions.  The graphs $\varGamma_w$ and
$\varGamma_v$ coincide (as non-rooted graphs) whenever $v$ and $w$
are in the same orbit  of the action of $G$.  For the first three
levels of the tree the resulting graphs are shown in Figure
\ref{Schreier-finite}.
\begin{figure}[h!]
\begin{center}
\hspace*{-.5cm}
\includegraphics[scale=0.75]{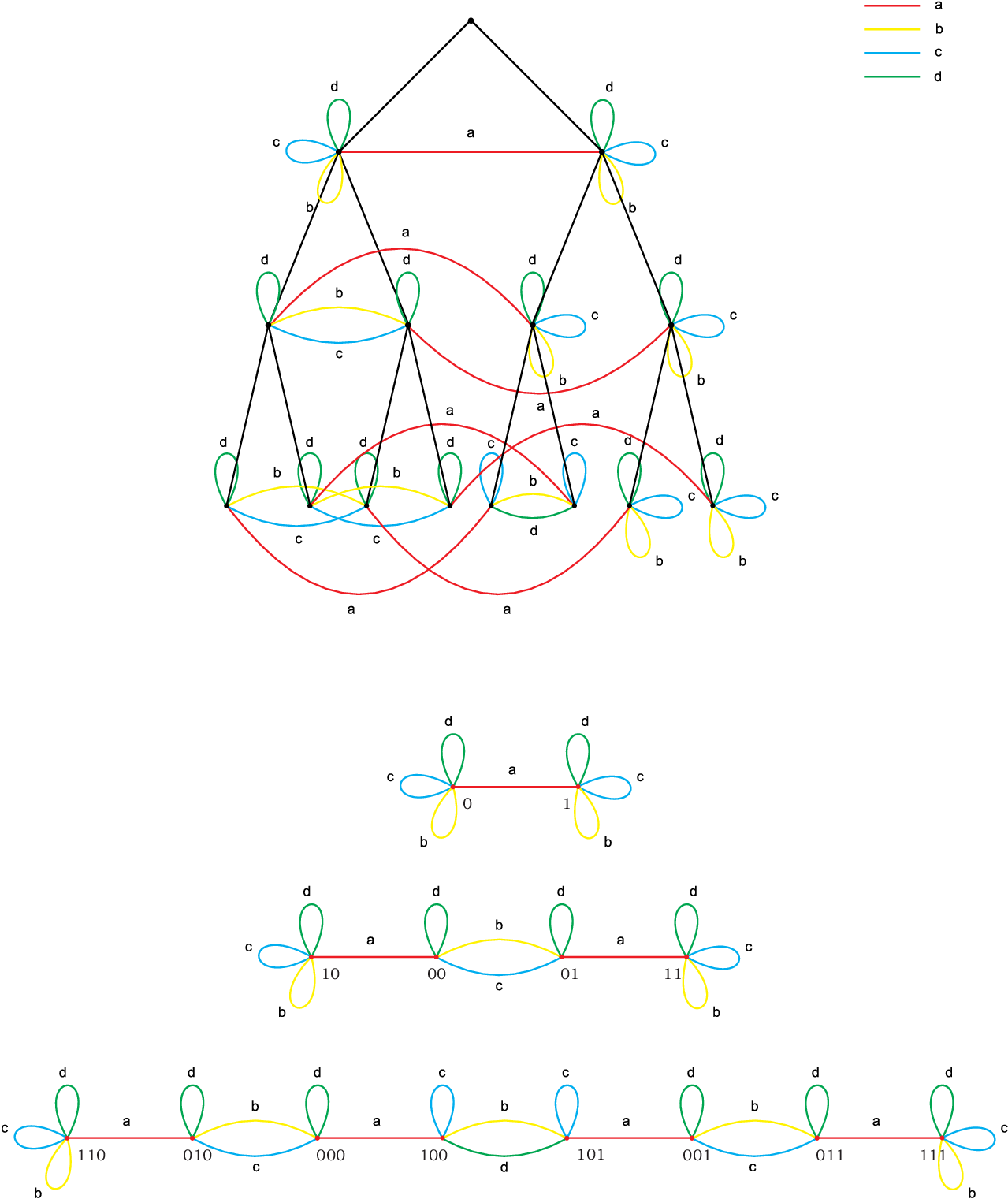}
\caption{The finite Schreier graphs of the first, second and third
level.} \label{Schreier-finite}
\end{center}
\end{figure}
As $G$ acts transitively on each level of the tree, for each $n\in
\NN$  the graph
$$\varGamma_n:= \varGamma_{1^n}$$
coincides with $\varGamma_w$ for all $w\in V(\mathcal T)$ with
$|w|=n$.

From  \cite{BG} we infer the following description of the Schreier
graphs: The Schreier graph $\Gamma_n$ has  $2^n$ vertices and  a
linear shape; it has $2^{n-1}$ simple edges, all labeled by $a$, and
$2^{n-1}-1$ cycles of length 2 whose edges are labeled by $b,c,d$.
It is regular of degree 4. The Schreier graphs corresponding to the
orbits of the action on the boundary are infinite and will be
referred to as \textit{orbital Schreier graphs}. The orbital
Schreier graphs have either two ends or one end. The graph
$\Gamma_{1^\infty}$ corresponding to the orbit of the rightmost
infinite ray in $T$ is one-ended, see Figure \ref{One-ended}, (and
so are then clearly all graphs in  the same orbit).
\begin{figure}[ht]
\begin{center}
\hspace*{-.5cm}
\includegraphics[scale=0.75]{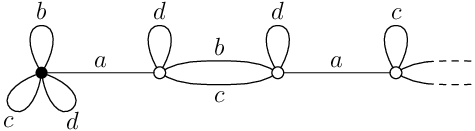} \caption{The one-ended graph $\varGamma_{1^\infty}$.} \label{One-ended}
\end{center}
\end{figure}
All  the other orbital Schreier graphs $\Gamma_\xi, \xi\notin G\cdot
1^\infty$,  are two-ended. They are all isomorphic as unlabeled
graphs.

We will view the Schreier graphs as rooted and be interested in them
up to isomorphism classes only. Thus, we introduce the map
$$\mathcal{F} :V(\mathcal{T})\cup
\partial \mathcal{T}  \longrightarrow \mathcal G_*
(\{a,b,c,d\}),\;\mathcal F (v):=[ (\varGamma_v, v)],$$ where
$\mathcal G_* (\{a,b,c,d\})$ is the space of isomorphism classes of
rooted graphs with labels in $\{a,b,c,d\}$ and  $(\varGamma_v,v)$
stands for the graph $\varGamma_v$ with root $v$ and $[\cdot]$
denotes the isomorphism class.
As is well-known (see e.g. Section 3.1 in \cite{BG} or Section 2.2
of \cite{DDMN} and compare \cite{GZ99} as well) finite Schreier
graphs converge to infinite orbital Schreier graphs, as follows.
Here and in the sequel  we use the usual  topology of local
convergence  on the space $ \mathcal G_* (\{a,b,c,c\})$  of
isomorphism classes of rooted connected graphs with  labels in
$\{a,b,c,d\}$, as discussed e.g. in \cite{Vor2}.

\begin{Lemma}\label{F-continuous}
 For every $\xi = \xi_1 \xi_2\ldots \xi_n\ldots  \in \{0,1\}^\NN$, the sequence
$(\mathcal F (\xi_1\ldots \xi_n))$ converges to  $\mathcal F (\xi)$.
\end{Lemma}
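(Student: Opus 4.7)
The plan is to unpack the meaning of local convergence for labeled rooted graphs and then exploit the fact that Grigorchuk's group acts by tree automorphisms, so its action commutes with truncation to finite levels. Concretely, convergence in $\mathcal G_*(\{a,b,c,d\})$ means that for every $R\in\NN$ the $R$-balls around the roots coincide as labeled rooted graphs, so the goal reduces to producing, for each $R$, a level $N(R)$ such that the label-preserving map
$$g\xi \longmapsto g(\xi_1\ldots\xi_n),\qquad g\in G\;\text{with}\;|g|\le R,$$
is a well-defined isomorphism between the $R$-ball of $(\varGamma_\xi,\xi)$ and the $R$-ball of $(\varGamma_{\xi_1\ldots\xi_n},\xi_1\ldots\xi_n)$ for all $n\ge N(R)$.

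The first step is the key algebraic observation that every $g\in G$ is a tree automorphism, hence preserves levels; consequently $g(\xi_1\ldots\xi_n)$ equals the length-$n$ prefix of $g\xi$ for all $n$, so the map above is certainly well defined and surjective onto the $R$-ball on the finite level, and it takes an edge $\{h\xi,sh\xi\}$ labeled by $s$ to the edge $\{h(\xi_1\ldots\xi_n),sh(\xi_1\ldots\xi_n)\}$ also labeled by $s$. Thus labels are automatically preserved, including self-loops whenever $sh\xi=h\xi$.

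The second step handles injectivity and the absence of spurious coincidences at finite level. List the (finitely many) vertices of the $R$-ball in $\varGamma_\xi$ as $g_1\xi,\ldots,g_K\xi$ with $|g_i|\le R$, and form the finite set
$$H_R:=\bigl\{g_i^{-1}sg_j : 1\le i,j\le K,\; s\in\{1,a,b,c,d\}\bigr\}.$$
For each $h\in H_R$ with $h\xi\neq\xi$, continuity of the $G$-action on $\partial\mathcal T$ (equivalently, the fact that if $h\xi\neq\xi$ then $h\xi$ and $\xi$ differ in some finite prefix) yields an $N_h$ with $h(\xi_1\ldots\xi_n)\neq \xi_1\ldots\xi_n$ for all $n\ge N_h$. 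Setting $N(R):=\max_h N_h$, no two distinct ball vertices are identified for $n\ge N(R)$ and no extra self-loop or extra edge appears in the finite ball. This gives the desired isomorphism of $R$-balls, and since $R$ was arbitrary, the lemma follows.

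I do not expect a real obstacle here; the only point requiring a little care is making sure that the truncation map does not create extraneous identifications or self-loops at the finite level, which is precisely why one must take $N(R)$ large enough to separate all the finitely many group elements in $H_R$ that act non-trivially on $\xi$. The faithfulness of the action on the orbit of $\xi$ — guaranteed because any tree automorphism fixing a cofinal set of prefixes of $\xi$ fixes $\xi$ itself — is what makes this separation possible.
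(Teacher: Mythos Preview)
The paper does not give its own proof of this lemma; it simply introduces it with ``As is well-known (see e.g.\ Section 2.2 of \cite{DDMN})'' and moves on. Your argument supplies precisely the elementary verification the paper omits, and it is correct: the map $g\xi\mapsto g(\xi_1\ldots\xi_n)$ is well defined and label-preserving because tree automorphisms commute with taking prefixes, and your finite set $H_R$ together with the threshold $N(R)$ correctly rules out all spurious identifications at the finite level, both of vertices (the case $s=1$) and of edge-endpoints (the cases $s\in\{a,b,c,d\}$).

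One small comment: your closing sentence about ``faithfulness of the action on the orbit of $\xi$'' is unnecessary and slightly misleading. What you actually use, and already stated correctly a few lines earlier, is just that if $h\xi\neq\xi$ then the two boundary points differ at some fixed coordinate $k$, so their length-$n$ prefixes differ for every $n\ge k$; no faithfulness hypothesis enters.
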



In \cite{Vor2}, Vorobets studied the closure $\overline{\mathcal
F(\partial \mathcal T)}$  in the space $ \mathcal G_*
(\{a,b,c,c\})$.  He showed that
the one-ended graphs are exactly the isolated points of this closure
$\overline{\mathcal F(\partial \mathcal T)}$, and that the other
points in $\overline{\mathcal F(\partial \mathcal T)}$ are two ended
graphs. This suggests to consider the compact subset of the space $\mathcal G_*(\{a,b,c,c\})$
$$X := \overline{\mathcal F(\partial \mathcal T)} \setminus
\{\text{isolated points}\}$$
which is called \textit{the space of Schreier graphs} of $G$.
 Then, the  group $G$ acts on  $X$ by
changing the root of the graph and this action is minimal (i.e. each
orbit is dense)  and uniquely ergodic, i.e., admits  a unique
invariant probability measure which will be denoted as  $\nu$. The dynamical system $(X,G)$ will be the  focus of attention in later sections.

In \cite{Vor2}, the following precise description of $X$ is given. The space $X$
is the disjoint union $$X = X_1 \sqcup X_2$$ of two sets $X_1$ and
$X_2$. Here, $X_2$ consists of all isomorphism classes of two-ended
rooted Schreier graphs $\{(\Gamma_\xi,\xi)  : \xi\in\partial
\mathcal T \setminus G\cdot 1^\infty\}$. The set $X_1$ consists of
three  countable families of isomorphism classes  of two-ended
graphs. These families are obtained by gluing two copies of the
one-ended graph $\Gamma_\xi, \xi\in G\cdot 1^\infty$,  at the root
in three possible ways corresponding to choosing a pair $(b,c)$,
$(b,d)$ or $(c,d)$, and then choosing an arbitrary vertex of the
arising graph as the root. One of these three possibilities is shown
in Figure \ref{Connecting-two-copies}. There, the chosen pair is
$(c,d)$ and to avoid confusion with other edges with the same
labels, the labels at the gluing point are denoted with a prime (and
the root is chosen arbitrarily).  These new graphs are again
Schreier graphs of $G$.

\begin{figure}[ht]
\begin{center}
\hspace*{-.5cm}
\includegraphics[scale=.75]{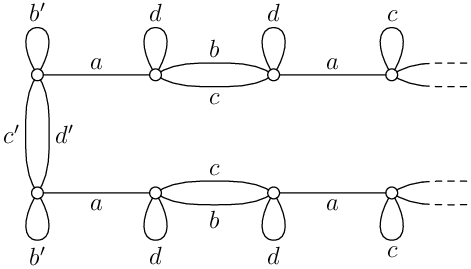}
\caption{Connecting two copies of $\Gamma_{1^\infty}$}
\label{Connecting-two-copies}
\end{center}
\end{figure}

Then, $(\partial \mathcal T,G)$ is a factor of $(X,G)$,  viz there
exists a surjective continuous map
$$\phi : X\longrightarrow
\partial \mathcal T$$ intertwining the respective
actions of $G$. Indeed, the decomposition of $X$ into isomorphism
classes of the $(\varGamma_\xi,\xi)$ and the three families
mentioned above gives rise to this map and  $\phi : X_2
\longrightarrow \mathcal{T}\setminus G\cdot 1^\infty$ is one-to-one
and onto with inverse given by $\mathcal{F}$ and $\phi : X_1
\longrightarrow G \cdot 1^\infty$ is three-to-one and onto.


\subsection{Laplacians associated to the Schreier graphs of
$G$}\label{Operators} In this section we introduce the operators
whose spectral theory is our main concern in this paper.


%



Let  $\varGamma$ be a graph with vertex set $V$ and  edges labeled
by $a,b,c,d$. Replacing the labels by $t,u,v,w\in\RR$ we obtain a
weighted graph and this graph gives rise to the  Laplacian
$M_\varGamma (t,u,v,w)$.  Specifically, the Laplacian $M_\varGamma
(t,u,v,w)$ is the  selfadjoint operator
$$M_\varGamma (t,u,v,w): \ell^2 (V)\longrightarrow \ell^2 (V)$$
acting via
$$(M_\varGamma (t,u,v,w) \varphi) (p) = \sum_{q\stackrel{e}{\sim} p} w(e)
f(q).$$ Here, the sum is over all edges $e$ emanating from $p\in V$
and the weight $w (e)$ of the edge $e$ is given by $t,u,v,w$
depending on whether its label is $a,b,c,d$ respectively. In the
case $\varGamma = \varGamma_\xi$ with  $\xi \in \partial \mathcal T$
we  write $M_\xi (t,u,v, w)$ instead of $M_{\varGamma_\xi}
(t,u,v,w)$. For $x\in X$ (which is an isomorphism class of graphs)
we choose a representative $\widehat{x}$ and then  write - with a
slight abuse of language - $M_x (t,u,v,w)$ to denote the operator
$M_{\widehat{x}} (t,u,v,w)$.

\section{The substitution $\tau$,  its subshift
$(\varOmega_\tau,T)$ and the associated finite words
$\mbox{Sub}_\tau$}\label{The-substitution}
 In this section we
study the two-sided subshift induced by a particular substitution.
The one-sided subshift induced by this substitution had already been
studied by Vorobets \cite{Vor1}, and some of our  results can be seen as
 two-sided counterparts to his.  His
investigation relies on a connection to Toeplitz sequences. Here, we
develop a new approach based on what we call the $n$-decomposition
and the $n$-partition of the elements of the subshift.  This is
close in spirit to the partition-based approach to Sturmian
dynamical systems which was developed in \cite{DL0}  and then applied in the spectral theory (see
the survey \cite{Dam}) and in some combinatorial questions
\cite{DL5,DL6}. For general background on subshifts we
refer to e.g. the textbook \cite{LM}.

Whenever $\mathcal{A}$ is a finite set, referred to as
\textit{alphabet},  we will consider the set $\mathcal{A}^\ast$ of
finite words (including the empty word)
as well as the set $\mathcal{A}^\ZZ$
of two-sided infinite words over the alphabet $\mathcal{A}$ and the
set $\mathcal{A}^\NN$
of one-sided infinite words over $\mathcal{A}$.
If $v,w$ are finite words and $\omega\in \mathcal{A}^\ZZ$ satisfies
$ \omega_1 \ldots \omega_{|v|} = v\:\; \mbox{and}\:\:
\omega_{-|w|+1}\ldots \omega_0 = w$ we write
$$\omega = ... w|v...$$
and say that $|$ \textit{denotes the position of the origin}. We
equip $\mathcal{A}$ with the  discrete topology and
$\mathcal{A}^{\ZZ}$ with the  product topology.
A pair $(\varOmega,T)$ is called a
\textit{subshift} over $\mathcal{A}$ if $\varOmega$ is a closed
subset of $\mathcal{A}^{\ZZ}$ which is invariant under the
\textit{shift transformation}
$$T :
\mathcal{A}^{\ZZ} \longrightarrow \mathcal{A}^{\ZZ}, \;\: (T \omega)
(n):=\omega (n+1).$$
Whenever $\omega$ is a word over $\mathcal{A}$ (finite or infinite,
indexed by $\NN$ or by $\ZZ$) we define
$$\mbox{Sub} (\omega) := \mbox{Finite subwords of $\omega$}.$$
By convention, the set of finite subwords includes the empty word.
Every subshift  $(\Omega,T)$ comes naturally with the set
$\mbox{Sub} (\Omega)$ of associated finite words given by
 $$ \mbox{Sub} (\Omega)
:= \bigcup_{\omega\in \Omega} \mbox{Sub}(\omega).$$ A word $v \in
\mbox{Sub} (\Omega) $ is said to \textit{occur with bounded gaps} if
there exists an $L_v >0$ such that every $w\in \mbox{Sub} (\Omega)$
with $|w|\geq L_v$ contains a copy of $v$. As is well known (and not
hard to see)  $(\Omega,T)$ is minimal  if and only if every $v\in
\mbox{Sub} (\Omega)$ occurs with bounded gaps.  We will be concerned
with  a  strengthening of the bounded gaps condition:  A subshift
$(\varOmega,T)$ is called \textit{linearly repetitive} (LR), if
there exists a constant $C>0$ such that every word $v \in
\mbox{Sub}(\varOmega)$  occurs  in every word $w\in
\mbox{Sub}(\varOmega)$ of length at least $C |v|$. This notion has
been discussed under various names (including  linear recurrence) in
various contexts \cite{DHS,LP,Sol}. A linearly repetitive subshift
is uniquely
 ergodic and minimal \cite{Dur} and even allows for a uniform
 subadditive ergodic theorem \cite{Len3}.



\subsection{Basic features of the substitution  $\tau$}
 Consider the alphabet $\mathcal{A} = \{a,x,y,z\}$ and
let  $\tau$ be  the substitution mapping $a\mapsto a x a$, $x\mapsto
y$, $y\mapsto z$, $z\mapsto x$. Let $\mbox{Sub}_\tau$ be  the
associated set of finite words given by
$$\mbox{Sub}_\tau =\bigcup_{s\in \mathcal{A}, n\in \NN\cup\{0\} } \mbox{Sub}(\tau^n
(s)).$$ Then, the following three properties obviously hold:

\begin{itemize}

\item The letter $a$ is a prefix of $\tau^n (a)$ for all $n\in
\NN\cup\{0\}$.

\item The lengths $|\tau^n (a)|$ converge to $\infty$ for $n\to
\infty$.

\item Every letter of $\mathcal{A}$ occurs in $\tau^n(a)$ for some  $n$.

\end{itemize}

Since  $a$ is a prefix of $\tau (a)$  we have that  $\tau^n (a)$ is
a prefix of $\tau^{n+1} (a) $ for all $n\in\NN\cup\{0\}$. As the
length of $\tau^n (a)$ goes to infinity, there exists then  a unique
one-sided infinite word $\eta$ such that $\tau^n (a)$ is a prefix of
$\eta$ for all $n\in\NN\cup\{0\}$. This $\eta$ is a fixed point of
$\tau$
 i.e. $\tau (\eta) = \eta$. We will refer to it as \textit{the fixed
 point of the substitution $\tau$}.
 By the
third property we  have
$$\mbox{Sub}_\tau = \mbox{Sub} (\eta).$$
We can now associate to $\tau$ the subshift
$$\varOmega_\tau:=\{ \omega\in \mathcal{A}^\ZZ : \mbox{Sub} (\omega)\subset
\mbox{Sub}_\tau\}.$$ It can be easily seen that every other letter
of $\eta$ is the letter  $a$. In particular, the letter  $a$ occurs
in $\eta$ with bounded gaps. This implies that every word of
$\mbox{Sub}_\tau$ occurs with bounded gaps (as any such  word is a
subword of $\tau^n (a)$ and $\eta$ is a fixed point of $\tau$). For
this reason $(\varOmega_\tau,T)$ is minimal and $\mbox{Sub}(\omega)
= \mbox{Sub}_\tau$ for all $\omega \in \varOmega_\tau$. From Theorem
1 of \cite{DL2} (or a simple direct argument based on the above
three properties) we then obtain the following.

\begin{Theorem}\label{Theorem-tau-linear-repetitive} The subshift $(\varOmega_\tau,T)$ is linearly
repetitive.
\end{Theorem}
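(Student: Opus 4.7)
The plan is to invoke the general criterion of [DL2, Theorem 1], which gives hypotheses for linear repetitivity of substitutive subshifts that do not require primitivity. Most of the groundwork has already been laid in the paragraph preceding the statement: the existence of the fixed point $\eta$, the identification $\mbox{Sub}_\tau = \mbox{Sub} (\eta)$, minimality of $(\varOmega_\tau, T)$, and the bounded-gap occurrence of the letter $a$ in $\eta$ (in fact every letter at an odd position of $\eta$ is an $a$). So the work is essentially to feed these inputs into the abstract criterion and verify any remaining technical hypothesis.

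The one extra ingredient I would record is the growth rate of $|\tau^n (a)|$. A one-line induction from $\tau(a) = axa$ together with $|\tau(s)| = 1$ for $s \in \{x,y,z\}$ gives $|\tau^n(a)| = 2^{n+1} - 1$, whereas $|\tau^n(s)|$ remains $1$ for $s \in \{x,y,z\}$. Thus $a$ is the unique expanding letter of $\tau$, and it occurs with gap exactly two in $\eta$. These two features, geometric expansion of $a$ and uniformly bounded gaps of $a$ in $\eta$, are precisely the engine behind the criterion of [DL2].

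Heuristically the reason linear repetitivity then holds is the following. Given $v \in \mbox{Sub}_\tau$, choose the minimal $n$ with $v$ a subword of $\tau^n(a)$; the geometric growth of $|\tau^n(a)|$ forces $|v|$ to be comparable to $|\tau^n(a)| \sim 2^n$ up to a universal constant. Since $a$ reappears at every even position of $\eta$, the block $\tau^n(a)$ itself reappears along $\eta$, and hence along every $\omega \in \varOmega_\tau$, at regularly spaced positions with spacing of order $|\tau^n(a)|$ and hence of order $|v|$. Therefore $v$ occurs in every factor of length $C|v|$ for some absolute $C$, which is exactly linear repetitivity.

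The main obstacle, and the reason for quoting [DL2] rather than writing the argument out from scratch, is the recognizability step implicit in the previous paragraph: one needs long factors in $\mbox{Sub}_\tau$ to admit canonical decompositions into $\tau$-images, so that the hierarchy of occurrences propagates cleanly between scales. In the primitive case this is classical, but here the letters $x,y,z$ are merely permuted by $\tau$ rather than expanded, so one must argue that the expansion carried by $a$ alone suffices and that parsing a long factor into alternating $a$'s and single even-position letters is unambiguous. This is precisely the situation [DL2, Theorem 1] is built to handle.
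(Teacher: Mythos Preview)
Your proposal is correct and follows essentially the same route as the paper: the paper also establishes that every other letter of $\eta$ is an $a$ (hence bounded gaps, hence minimality) and then simply invokes Theorem~1 of \cite{DL2}. Your additional remarks on the growth $|\tau^n(a)|=2^{n+1}-1$ and the heuristic for linear repetitivity are fine elaborations but not needed beyond what the paper records before citing \cite{DL2}.
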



Our further considerations will be based on a more careful study of
the words $\tau^n (a)$. We set
$$p^{(0)}:= a\;\:\mbox{and}\;\:
 p^{(n)} :=\tau^n (a)\;\: \mbox{for}\;\: n\in \NN.$$
  A direct calculation gives
$p^{(n+1)} = \tau^{n+1} (a) = \tau^n (axa) =\tau^n (a) \tau^n
(x)\tau^n (a),$ i.e.
$$ (RF)\;\:\hspace{1cm}  p^{(n+1)} =  p^{(n)}  \tau^n (x)  p^{(n)}\;\:\mbox{with}\;\:  \tau^n (x) =\left\{ \begin{array}{ccc}  x &:& n = 3 k, k\in \NN\cup \{0\} \\ y &:& n = 3k+1, k\in \NN\cup\{0\}\\
z&:& n = 3 k + 2, k \in \NN \cup\{0\}  \end{array}, \right. $$ We
will refer to $(RF)$ as the \textit{recursion formula for the words
$p^{(n)}$}.\label{recursion}

We will now define three special elements
$\omega^{(x)},\omega^{(y)},\omega^{(z)}\in\varOmega_\tau$ closely
related to $\eta$, that will be useful in our subsequent analysis of
the subshift.

\begin{Lemma}[The special words $\omega^{(x)},\omega^{(y)},\omega^{(z)}$]\label{omega-x-y-z} For every $n\in \NN\cup\{0\}$ and every letter  $s\in\{x,y,z\}$ the word $p^{(n)} s
p^{(n)}$ occurs in $\eta$. In particular, for all $s\in\{x,y,z\}$
there exists a unique  element $\omega^{(s)}\in\varOmega_\tau$  such
that $$\omega^{(s)} = ... p^{(n)} s | p^{(n)}...$$ holds for all
natural numbers $n$,  where the $|$ denotes the position of the
origin.
\end{Lemma}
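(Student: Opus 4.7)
The plan is to prove the two claims in sequence: first the occurrence of $p^{(n)} s p^{(n)}$ in $\eta$ via iteration of the recursion formula $(RF)$, then the construction and uniqueness of $\omega^{(s)}$.

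For the occurrence, a single application of $(RF)$ gives $p^{(n+1)} = p^{(n)} \tau^n(x) p^{(n)}$ as a prefix of $\eta$ and produces only one of the three possible separators between adjacent copies of $p^{(n)}$. Iterating three times yields
\[
p^{(n+3)} = p^{(n)} \tau^n(x) p^{(n)} \tau^{n+1}(x) p^{(n)} \tau^n(x) p^{(n)} \tau^{n+2}(x) p^{(n)} \tau^n(x) p^{(n)} \tau^{n+1}(x) p^{(n)} \tau^n(x) p^{(n)},
\]
so the letters separating consecutive copies of $p^{(n)}$ inside $p^{(n+3)}$ exhaust $\{\tau^n(x), \tau^{n+1}(x), \tau^{n+2}(x)\}$. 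Since $\tau$ cycles $x \mapsto y \mapsto z \mapsto x$, this set equals $\{x,y,z\}$ irrespective of $n \bmod 3$. As $p^{(n+3)}$ is a prefix of $\eta$, each of $p^{(n)} x p^{(n)}$, $p^{(n)} y p^{(n)}$, $p^{(n)} z p^{(n)}$ occurs in $\eta$.

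For the construction of $\omega^{(s)}$ I would exploit that $(RF)$ makes $p^{(n)}$ both a prefix and a suffix of $p^{(n+1)}$, so the family $(p^{(n)})$ is nested on both sides. Set $\omega^{(s)}_0 := s$, define $\omega^{(s)}_k$ for $k \geq 1$ to be the $k$-th letter of $p^{(n)}$ for any $n$ with $|p^{(n)}| \geq k$, and define $\omega^{(s)}_k$ for $k \leq -1$ to be the $(|p^{(n)}|+k+1)$-th letter of $p^{(n)}$ for any $n$ with $|p^{(n)}| \geq -k$. The prefix/suffix nesting makes these definitions independent of the chosen $n$, and by construction the centering identity $\omega^{(s)} = \ldots p^{(n)} s \, | \, p^{(n)} \ldots$ holds for every $n$. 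Uniqueness is then immediate, and $\omega^{(s)} \in \varOmega_\tau$ follows because any finite subword of $\omega^{(s)}$ is contained in the block $p^{(n)} s p^{(n)}$ for $n$ large enough, and this block lies in $\mbox{Sub}_\tau = \mbox{Sub}(\eta)$ by the first step. The only real obstacle is the first step: one must notice that a single application of $(RF)$ is insufficient and that three iterations are needed to expose all three possible separators between adjacent copies of $p^{(n)}$.
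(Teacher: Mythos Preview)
Your proof is correct and follows essentially the same approach as the paper. The only cosmetic difference is in the first step: the paper observes that $\tau^3(a)$ contains $axa$, $aya$, $aza$ and then applies $\tau^n$ (using that $\eta$ is a fixed point and that $\tau^n$ permutes $\{x,y,z\}$), whereas you expand $p^{(n+3)}$ directly via three applications of $(RF)$; these are the same computation phrased from opposite ends, and your treatment of the second statement is simply a more explicit version of the paper's appeal to the prefix/suffix nesting of the $p^{(n)}$.
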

\begin{proof}
Note that $\tau^3 (a)$ contains $a x a$, $a y a$ and $a z a$. As
$\eta$ is a fixed point of $\tau$ and $\tau^n$ is injective on
$\{x,y,z\}$ the first statement follows. By the recursion formula
(RF) each $p^{(n+1)}$ starts and ends with $p^{(n)}$. Given this,
the second statement follows from the first statement.
\end{proof}

The  sequences $\omega^{(x)},\omega^{(y)},\omega^{(z)}$  are
different but all agree on $\NN$. Hence $\varOmega_\tau$ is not
periodic and this persist even after renaming the letters (provided
not all are given the same name).

\begin{Proposition}\label{Proposition-Shift-non-periodic} Let $\mathcal{B}$ be a finite set and  $ C : \mathcal{A} \longrightarrow
\mathcal{B}$ a map such that $C (x) = C(y) = C(z)$ does not hold.
Then, the subshift $\varOmega^{(C)}_\tau:=\{ C\circ \omega : \omega
\in \varOmega_\tau\}$ is not periodic and linearly repetitive.
\end{Proposition}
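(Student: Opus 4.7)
The plan is to treat the two assertions separately, viewing $\varOmega^{(C)}_\tau$ as the image of $\varOmega_\tau$ under the continuous shift-equivariant letter-to-letter map $\omega\mapsto C\circ\omega$.

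For linear repetitivity, I would show that the constant $C_0$ from Theorem \ref{Theorem-tau-linear-repetitive} works for $\varOmega^{(C)}_\tau$ as well. Any $v\in\mbox{Sub}(\varOmega^{(C)}_\tau)$ of length $n$ lifts to some $\tilde v\in\mbox{Sub}(\varOmega_\tau)$ with $C\circ\tilde v=v$; similarly, any $w\in\mbox{Sub}(\varOmega^{(C)}_\tau)$ of length $\ell\geq C_0 n$ lifts to $\tilde w\in\mbox{Sub}(\varOmega_\tau)$ of the same length. Linear repetitivity of $\varOmega_\tau$ places $\tilde v$ inside $\tilde w$, and applying $C$ letter by letter yields the desired occurrence of $v$ inside $w$.

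For non-periodicity, the decisive input is the trio $\omega^{(x)},\omega^{(y)},\omega^{(z)}$ supplied by Lemma \ref{omega-x-y-z}. Since the identity $\omega^{(s)}=\ldots p^{(n)}s\mid p^{(n)}\ldots$ holds for every $n\in\NN$ with the \emph{same} blocks $p^{(n)}$ flanking the origin, the three elements agree at every position $i\neq 0$ and carry the three distinct letters $x,y,z$ at position $0$. Applying $C$ coordinate-wise, the three images $C\circ\omega^{(s)}\in\varOmega^{(C)}_\tau$ still agree off position $0$ and have values $C(x),C(y),C(z)$ there. By the hypothesis on $C$, at least two of these values differ; fix $s,t\in\{x,y,z\}$ with $C(s)\neq C(t)$.

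Now suppose for contradiction that $\varOmega^{(C)}_\tau$ is periodic, i.e.\ contains a periodic point. Linear repetitivity (just established) implies that $(\varOmega^{(C)}_\tau,T)$ is minimal, so the existence of one periodic element forces every element to share a common period $q\geq 1$. Then
\[
C(s) \;=\; (C\circ\omega^{(s)})_0 \;=\; (C\circ\omega^{(s)})_q \;=\; (C\circ\omega^{(t)})_q \;=\; (C\circ\omega^{(t)})_0 \;=\; C(t),
\]
where the outer equalities use $q$-periodicity and the middle one uses the agreement of $\omega^{(s)}$ and $\omega^{(t)}$ at the position $q\neq 0$; this contradicts $C(s)\neq C(t)$. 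The argument has no serious obstacle; the only delicate point is the passage from \emph{``some periodic element exists''} to \emph{``all elements share a single period $q$''}, for which the minimality of $\varOmega^{(C)}_\tau$ inherited from $\varOmega_\tau$ is essential.
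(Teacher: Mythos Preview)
Your proof is correct and follows essentially the same approach as the paper: linear repetitivity is inherited through the factor map by lifting words, and non-periodicity comes from the three special elements $\omega^{(x)},\omega^{(y)},\omega^{(z)}$ agreeing off a single position while not all being mapped to the same image under $C$. The paper's proof is more terse---it only records that the images agree on $\NN$ but are not all equal, and asserts this ``can easily be seen'' to preclude periodicity---whereas you spell out the passage through minimality and a common period explicitly; but the underlying argument is the same.
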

\begin{proof} Obviously, the map $\varOmega_\tau\longrightarrow \varOmega^{(C)}_\tau,
 \omega \mapsto C\circ \omega,$ is
continuous and onto and preserves linear repetitivity. In
particular, $(\varOmega^{(C)}_\tau,T)$ is a minimal subshift.
Moreover, the two-sided infinite words $C\omega^{(x)},
C\omega^{(y)}, C\omega^{(z)}$ all agree  on $\NN$ but are not all
equal (due to the assumption on $C$). This can easily be seen to
imply that the subshift $\varOmega_C$ is not periodic.
\end{proof}

Recall that a non-empty  word $w = w_1\ldots w_l\in\mathcal{A}^\ast$
with $w_j\in \mathcal{A}$ is called a \textit{palindrome} if $w =
w_l \ldots w_1$. The recursion formula and a simple induction imply
that, for any $n\in\NN$,   the word  $p^{(n)}$ is a palindrome of
length $2^{n+1} -1$ and starts and ends with $p^{(k)}$ for every
$k\in \NN\cup\{0\}$ with $k\leq n$.

As every other letter of $\eta$ is the letter   $a$ and $\eta$ is a
fixed point of $\tau$ we immediately infer that there exists a
sequence $(r_j)_{j\in\NN}$ in $\{x,y,z\}$ such that for all
$n\in\NN\cup\{0\}$ the word $\eta$ has a (unique) decomposition as
$$ \eta = p^{(n)}  \tau^n (r_1)  p^{(n)} \tau^n (r_2) ....$$
This way of writing $\eta$ will be called the
\textit{$n$-decomposition of $\eta$} and the  sequence
$(r_j)_{j\in\NN}$ the \textit{derived sequence of $\eta$}. Note that
the combinatorial properties of the sequence $(\tau^n (r_j))_{j\in
\NN}$
 are exactly the same as the combinatorial properties  of the sequence $(r_j)_{j\in\NN}$ as $\tau^n$
 is injective on $\{x,y,z\}$. Among these combinatorial properties
 we note the following.

\begin{Proposition}\label{Isolation-eta} In the derived sequence $(r_j)_{j\in\NN}$ the  letters $y$ and $z$
always occur  isolated preceded and followed by an $x$. The letter
$x$ always occurs either isolated (i.e. preceded and followed by
elements of $\{y,z\}$) or  in the form $x x x$. The analogue
statements hold for every natural number $n$ for the sequence
$(\tau^n (r_j))_j$ (with $x,y,z$ replaced by $\tau^n (x), \tau^n
(y)$ and $\tau^n (z)$).
\end{Proposition}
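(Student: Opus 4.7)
The plan is to derive a self-similar recursion for the derived sequence $(r_j)_{j\in\NN}$ from the fixed-point identity $\tau(\eta)=\eta$, and then to read off both combinatorial claims by an elementary case analysis; the statement for general $n$ will follow immediately from the fact that $\tau^n$ permutes $\{x,y,z\}$.

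Starting from the $0$-decomposition $\eta = a r_1 a r_2 a r_3 \cdots$ and observing that $\tau(r_j)\in\{x,y,z\}$ is a single letter for each $j$, I would compute
$$\tau(\eta) \;=\; \tau(a)\,\tau(r_1)\,\tau(a)\,\tau(r_2)\,\tau(a)\,\tau(r_3)\cdots \;=\; (axa)\,\tau(r_1)\,(axa)\,\tau(r_2)\,(axa)\,\tau(r_3)\cdots$$
and match it, letter by letter, against the $0$-decomposition of $\eta$ itself. The $a$'s align in the obvious way, and inspecting the remaining positions yields
$$r_{2k-1} \;=\; x \qquad\text{and}\qquad r_{2k} \;=\; \tau(r_k) \qquad\text{for every } k\geq 1.$$
This is the one key identity; everything else is a short consequence.

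From the recursion, $y$ and $z$ can appear only at even indices (since all odd-indexed entries are $x$), and every even index is flanked by odd indices carrying $x$. Hence each occurrence of $y$ or $z$ is isolated and bordered by $x$, which establishes the first part of the claim. For the letter $x$: an occurrence $r_j = x$ at an even position $j = 2k$ forces $r_k = z$ (since $\tau$ sends $z$ to $x$), so both neighbours $r_{j-1}$ and $r_{j+1}$ at odd positions are also $x$, producing at least the block $xxx$ centred at $j$. This block cannot extend to $xxxx$ on either side: $r_{j-2}=x$ or $r_{j+2}=x$ would force $r_{k-1}=z$ or $r_{k+1}=z$ respectively, contradicting the just-established isolation of $z$. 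Conversely, any $x$ not belonging to such a block of three is automatically bordered by elements of $\{y,z\}$ by the alphabet constraint.

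Finally, for general $n$: since $\tau$ acts as a cyclic permutation on $\{x,y,z\}$, its iterate $\tau^n$ restricts to a bijection of $\{x,y,z\}$ onto itself, so the sequence $(\tau^n(r_j))_j$ is obtained from $(r_j)_j$ by pure relabeling of the three non-$a$ letters; all block and adjacency patterns are preserved, and the analogous statements follow. The only non-trivial step in the whole argument is the derivation of the recursion from $\tau(\eta)=\eta$, which amounts to a straightforward bookkeeping of positions; I do not anticipate any substantial obstacle.
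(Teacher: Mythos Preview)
Your argument is correct and shares the paper's overall strategy: both apply $\tau$ to the $0$-decomposition $\eta = a r_1 a r_2 \cdots$, compare with $\eta$ itself, and read off that every odd-indexed $r_j$ equals $x$ (hence $y$ and $z$ are isolated); both then reduce the general-$n$ statement to the bijectivity of $\tau^n$ on $\{x,y,z\}$. The only genuine difference is in the exclusion of $xxxx$. The paper passes back to $\eta$, reads the hypothetical block as $p_1 x p_1 x p_1 l$ inside the $1$-decomposition, and derives a contradiction from the $2$-decomposition via $p_2 = p_1 y p_1 \neq p_1 x p_1$. You instead stay entirely within the derived sequence by extracting the sharper recursion $r_{2k}=\tau(r_k)$: a fourth consecutive $x$ at an even index would force two adjacent $z$'s in $r$, contradicting the isolation of $z$ just established. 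Your route is a bit more self-contained and sidesteps the alignment bookkeeping in $\eta$; the paper's formulation, on the other hand, ties in directly with the $n$-decomposition machinery used throughout the section.
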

\begin{proof} As $\eta$ is a fixed point, we have
$$\eta = a r_1 a r_2 a ...= \tau (a) \tau (r_1) \tau (a) \tau (r_2) ...= p_1
\tau (r_1) p_1 \tau (r_2) ...$$ with $p_1 = \tau (a) = a x a$ and
$\tau(r_j)\in\{x,y,z\}$ for all $j\in\NN$. From this we find that
every other letter of $r$ must be the letter $x$. This shows the
claim on $y$ and $z$ and it also shows that $x$ appears in $r$
either isolated or with at least three $x$ in a row. Assume that
there is a block of the form $x x x x$ occurring in $r$. Then, $ a x
a x a x a x a l$ must occur in $\eta$ with $l\neq x$. In the
$1$-decomposition of $\eta$ this yields   $ p_1 x p_1 x p_1 l$. This
gives a contradiction when we consider the $2$-decomposition as $p_2
\neq p_1 x p_1$.  The last statement follows as $\tau^n$ is
injective on $\{x,y,z\}$.
\end{proof}




%

Our next aim is to provide an analogue of the $n$-decomposition of
$\eta$ for each $\omega\in\varOmega_\tau$.  We are thus looking for
each  $n\in\NN\cup\{0\}$ and $\omega\in\varOmega_\tau$ for a
decomposition of the form
$$\omega =  ... p^{(n)} s_0 p^{(n)} s_1 p^{(n)} s_2...$$
with  $s_k\in\{x,y,z\} $ for all $k\in \ZZ$ and  the origin
$\omega_0$ belonging to $s_0 p^{(n)}$. Such a decomposition will be
referred to as the \textit{$n$-decomposition} of $\omega$.
In such a decomposition the positions of the $s_j$'s are given by $q
+ 2^{n+1} \ZZ$ with $q\in \{0,\ldots, 2^{n+1}-1\}$. Thus, the
positions are given by an element of $\ZZ / 2^{n+1} \ZZ$. This
suggests the following definition.

\begin{Definition}[$n$-partition] For  $n\in \NN\cup\{0\}$ we call
an element $P\in  \ZZ / 2^{n+1} \ZZ $ an  $\textit{n-partition}$ of
$\omega\in \varOmega_\tau$ if for every $q\in P$ the following two
properties hold:
\begin{itemize}
\item $\omega_{q} \in \{x,y,z\}$;
\item $\omega_{q+1} \ldots \omega_{q+2^{n+1} -1} = p^{(n)}$.
\end{itemize}

\end{Definition}
Clearly, for $\omega\in\varOmega_\tau$,  the existence (resp.
uniqueness) of an $n$-partition is equivalent to the existence
(resp. uniqueness) of an $n$-decomposition.  In this sense these two
concepts are equivalent.
 It is not apparent that such an $n$-partition (or an
$n$-decomposition)  exists at all. Here is our corresponding result.
\begin{Theorem}[Existence and Uniqueness of $n$-partitions]\label{theorem-n-partition}  Let $n\in\NN\cup\{0\}$ be given. Then every $\omega\in
\varOmega_\tau$ admits a unique $n$-partition $ P^{(n)} (\omega)$
and the map
$$ P^{(n)} : \varOmega_\tau \longrightarrow \ZZ / 2^{n+1} \ZZ, \; \omega \mapsto P^{(n)} (\omega),$$ is
continuous and equivariant (i.e. $P^{(n)} (T\omega) = P^{(n)}
(\omega) + 1$).
\end{Theorem}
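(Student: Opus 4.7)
The plan is induction on $n$, establishing existence and uniqueness of $P^{(n)}(\omega)$; continuity and equivariance will then follow from the evident observation that the defining conditions refer only to finitely many coordinates of $\omega$. The key combinatorial input is Proposition \ref{Isolation-eta}, whose constraints transfer from $\eta$ to every $\omega\in\varOmega_\tau$ thanks to $\mbox{Sub}(\omega)\subset\mbox{Sub}_\tau=\mbox{Sub}(\eta)$. For the base case $n=0$ one has $p^{(0)}=a$, and the definition asks for $\omega_q\in\{x,y,z\}$ and $\omega_{q+1}=a$ at each $q\in P$. Since every other letter of $\eta$ is $a$, no two consecutive letters of $\eta$ can both lie in $\{x,y,z\}$ or both equal $a$; this restriction passes to every $\omega\in\varOmega_\tau$, so the positions at which $\omega$ takes a value in $\{x,y,z\}$ form precisely one of $2\ZZ$ or $2\ZZ+1$, which is the unique $0$-partition.

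For the inductive step, assume $P^{(n)}$ exists uniquely and fix the associated $n$-decomposition
\[
\omega=\ldots p^{(n)} s_{-1} p^{(n)} s_0 p^{(n)} s_1 \ldots,\qquad s_k\in\{\tau^n(x),\tau^n(y),\tau^n(z)\}.
\]
Because the $n$-decomposition is unique in both $\omega$ and $\eta$ and $\mbox{Sub}(\omega)\subset\mbox{Sub}(\eta)$, every finite subword of $(s_k)_{k\in\ZZ}$ also appears in the derived sequence of $\eta$ at level $n$, and Proposition \ref{Isolation-eta} applies: $\tau^n(y)$ and $\tau^n(z)$ are always isolated between two $\tau^n(x)$'s, while $\tau^n(x)$ appears only isolated or in a run of exactly three. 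Consequently the gap between any two successive positions of $(s_k)$ carrying a letter from $\{\tau^n(y),\tau^n(z)\}$ is $2$ or $4$, so all such positions share a common parity $\varepsilon\in\{0,1\}$, and $s_k=\tau^n(x)$ whenever $k\not\equiv\varepsilon\pmod 2$. Using the recursion $p^{(n+1)}=p^{(n)}\tau^n(x)p^{(n)}$ from $(RF)$, one then groups pairs of consecutive $n$-blocks
\[
p^{(n)}\, s_{2k+\varepsilon-1}\, p^{(n)}\, s_{2k+\varepsilon} \;=\; p^{(n+1)}\, s_{2k+\varepsilon}
\]
into $(n+1)$-blocks, exhibiting the desired $(n+1)$-decomposition and hence producing $P^{(n+1)}(\omega)$.

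The main obstacle in the induction is uniqueness of $P^{(n+1)}(\omega)$: the only conceivable alternative partition would differ from the one just constructed by $2^{n+1}$ modulo $2^{n+2}$ and, by the same grouping rule, would force $s_k=\tau^n(x)$ for \emph{every} $k\in\ZZ$; but then $\omega$ would contain arbitrarily long blocks $(p^{(n)}\tau^n(x))^m$, whose derived sequence is an arbitrarily long run of $\tau^n(x)$'s, contradicting Proposition \ref{Isolation-eta}. Finally, each value $P^{(n)}(\omega)$ depends only on the restriction of $\omega$ to a window of length $O(2^n)$ around the origin (enough to read off one occurrence of a block $p^{(n)}s$ and thereby pin down the residue class), so $P^{(n)}$ is locally constant and hence continuous. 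Equivariance is immediate from direct substitution in the two defining conditions: if $q$ witnesses the partition for $\omega$, then a translate of $q$ by $1$ witnesses it for $T\omega$, which yields the claimed shift on $\ZZ/2^{n+1}\ZZ$.
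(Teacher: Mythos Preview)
Your approach is genuinely different from the paper's and more constructive: the paper obtains existence of $P^{(n)}$ by a soft compactness argument (approximate $\omega$ by translates of $\omega^{(x)}$, which carry obvious $n$-partitions, and pass to a constant subsequence in the finite target $\ZZ/2^{n+1}\ZZ$), and then proves uniqueness by a separate induction whose punchline is that two distinct $(n{+}1)$-partitions refining the same $n$-partition would force $\omega$ to be periodic, contradicting Proposition~\ref{Proposition-Shift-non-periodic}. You instead run a single induction, building $P^{(n+1)}$ explicitly from the level-$n$ derived sequence via Proposition~\ref{Isolation-eta}.

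There is, however, a real gap in your inductive step. You assert that ``because the $n$-decomposition is unique in both $\omega$ and $\eta$ and $\mbox{Sub}(\omega)\subset\mbox{Sub}(\eta)$, every finite subword of $(s_k)_{k\in\ZZ}$ also appears in the derived sequence of $\eta$ at level $n$.'' Uniqueness of the global $n$-partition does \emph{not} by itself imply this: a finite block $w=p^{(n)}s_j p^{(n)}\cdots s_{j+m}p^{(n)}$ occurring somewhere in $\eta$ need not be aligned with $\eta$'s $n$-decomposition, so you cannot immediately read $s_j\ldots s_{j+m}$ off from $\eta$'s derived sequence. What is missing is precisely the alignment statement of Proposition~\ref{Prop-Alignment} (every occurrence of $p^{(n)}sp^{(n)}$ in $\eta$ sits at a position $1+k\,2^{n+1}$). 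In the paper this is proved \emph{after} the present theorem, but in fact its inductive proof uses only Proposition~\ref{Isolation-eta} for $\eta$, so you could legitimately fold it into your induction. Alternatively, you can strengthen your induction hypothesis to include continuity of $P^{(n)}$ (derived from uniqueness by the compactness argument the paper gives), and then transfer the isolation constraints to $\omega$ by approximating $\omega$ with shifts of $\omega^{(x)}$ whose $n$-partitions match; this is exactly how the paper obtains Corollary~\ref{Isolation}. Your final continuity justification (``read off one occurrence of a block $p^{(n)}s$ and thereby pin down the residue class'') has the same defect: without alignment, a window may contain misaligned copies of $p^{(n)}$, so locating ``one occurrence'' is not automatic. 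The paper's argument that uniqueness plus compactness of $\ZZ/2^{n+1}\ZZ$ yields continuity avoids this issue cleanly.

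Once the transfer is justified, the rest of your argument (the parity $\varepsilon$, the grouping $p^{(n)}\tau^n(x)p^{(n)}=p^{(n+1)}$, and the observation that the only alternative $(n{+}1)$-partition would force $s_k\equiv\tau^n(x)$) is correct and matches the paper's uniqueness step in substance.
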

\begin{proof} \textit{Existence of $P^{(n)} (\omega)$.} Recall that
$\omega^{(x)}\in\varOmega_\tau$ is the unique word with
$\omega^{(x)} = ...p^{(n)} x| p^{(n)}...$, where $|$ denotes the
position of the origin. Now, obviously, $\omega^{(x)}$ admits an
$n$-partition (by its very definition and $(RF)$). Moreover,  the
subshift is minimal. Hence, every $\omega\in \varOmega_\tau$ can be
approximated by a sequence of translates of $\omega^{(x)}$. These
all carry natural $n$-partitions coming from the $n$-partition of
$\omega^{(x)}$. As the values of these $n$-partitions all lie within
the finite set $\ZZ / 2^{n+1} \ZZ$ we can assume (after restricting
attention to a subsequence) without loss of generality that these
values are all equal. Now the existence of an $n$-partition for
$\omega$ is clear.

\textit{Uniqueness.} As discussed in the paragraph preceding the
theorem the concepts of $n$-decomposition and $n$-partition are
equivalent in the sense that the  existence (uniqueness)  of an
$n$-partition implies the existence (uniqueness) of an
$n$-decomposition and vice versa. This will be used in order to
obtain the uniqueness. Our proof proceeds by induction. The case $n
=0$ is clear. (In this case $p^{(0)} =a$.) Let us now show how to
proceed from $n$ to $n+1$. Consider an $(n+1)$-decomposition of
$\omega$. Such a decomposition exists by the already shown part.
Chose $s\in \{x,y,z\}$ with $p^{(n+1)} = p^{(n)} s p^{(n)}$. Then,
out of the $n+1$ decomposition of $\omega$ we obtain an
$n$-decomposition by just replacing $p^{(n+1)} $ by $p^{(n)} s
p^{(n)}$ in the corresponding decomposition of $\omega$. This
$n$-decomposition  is unique by our induction assumption. Now, it is
not hard to see that non-uniqueness of the $(n+1)$-decomposition can
only occur if $\omega$ is periodic. However, as discussed above in
Proposition \ref{Proposition-Shift-non-periodic} there is no
periodic sequence in $\varOmega$.

\textit{Continuity.} This is a direct consequence of uniqueness. Let
$\omega^{(k)}$ be a sequence converging to $\omega$ and let $P_k$
and $P$ be the respective $n$-partitions. We have to show $P_k \to
P$. As the space $\ZZ / 2^{n+1} \ZZ$ is finite (hence compact), it
suffices to show that any converging subsequence of $(P_k)$
converges to $P$. Now, it is clear that whenever a subsequence of
$(P_k)$ converges to some $P^{'}$ then $P^{'}$ is an $n$-partition
of $\omega$. By uniqueness we infer $P^{'} =P$ and this gives the
desired statement.

\textit{Equivariance.} This is a direct consequence of the existence
and uniqueness statements.
\end{proof}
\begin{Corollary}\label{Isolation} Let $n\in\NN\cup \{0\}$ and
$\omega \in \varOmega_\tau$ be arbitrary and consider the
$n$-decomposition  $\omega =  ... p^{(n)} s_0 p^{(n)} s_1 p^{(n)}
s_2...$. Then, the letters $\tau^n (y), \tau^n (z)$  occur isolated
in the sequence $(s_j)$ and the letter $\tau^n (x)$ is either
isolated or arises  in the form $\tau^n (x) \tau^n (x)\tau^n (x)$.
\end{Corollary}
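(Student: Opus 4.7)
The plan is to deduce the claim for a general $\omega \in \varOmega_\tau$ from its analogue for $\eta$ (Proposition \ref{Isolation-eta}), via minimality of the subshift combined with continuity and uniqueness of the $n$-partition provided by Theorem \ref{theorem-n-partition}.

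First I would introduce the \emph{derived-sequence map}
$$D^{(n)} : \varOmega_\tau \longrightarrow \{x,y,z\}^{\ZZ}$$
sending $\omega$ to the sequence $(s_j)$ of separators in its $n$-decomposition. Since $P^{(n)}$ is continuous with values in the discrete set $\ZZ/2^{n+1}\ZZ$, the separator positions are locally constant in $\omega$, and so $D^{(n)}$ is continuous.

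Next, I would verify the claim for the distinguished element $\omega^{(x)}$ from Lemma \ref{omega-x-y-z}. Its positive half coincides with $\eta$, so the positive part of $D^{(n)}(\omega^{(x)})$ equals $(\tau^n(r_j))_{j \geq 1}$, for which Proposition \ref{Isolation-eta} already delivers isolation. Palindromicity of each window $p^{(n)} x p^{(n)}$ forces $\omega^{(x)}$ itself to be a palindrome about position $-1$, so $D^{(n)}(\omega^{(x)})$ is the symmetric two-sided extension, with central letter $s_0 = x$ and neighbors $s_{\pm 1} = \tau^n(r_1) = \tau^n(x)$ (using $r_1 = x$, immediate from $\eta_1 = x$). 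The pattern at the center is then $\tau^n(x)\,x\,\tau^n(x)$, which is admissible in every residue class modulo $3$: when $n \equiv 0 \pmod 3$ one has $x = \tau^n(x)$, giving the permitted triple $\tau^n(x)\tau^n(x)\tau^n(x)$, while for $n \equiv 1, 2 \pmod 3$ the letter $x$ equals $\tau^n(z)$ or $\tau^n(y)$ respectively and appears correctly isolated between two copies of a distinct letter. The same reasoning applies verbatim to $\omega^{(y)}$ and $\omega^{(z)}$.

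For general $\omega \in \varOmega_\tau$, minimality of $(\varOmega_\tau, T)$ supplies a sequence $T^{k_i}\omega^{(x)} \to \omega$; by discreteness of $\ZZ/2^{n+1}\ZZ$ and continuity of $P^{(n)}$, eventually $P^{(n)}(T^{k_i}\omega^{(x)}) = P^{(n)}(\omega)$, and hence $D^{(n)}(T^{k_i}\omega^{(x)})$, which is a shift of $D^{(n)}(\omega^{(x)})$, converges pointwise to $D^{(n)}(\omega)$. The isolation property is finitely checkable (its forbidden patterns have length at most four) and is therefore preserved under pointwise limits in $\{x,y,z\}^{\ZZ}$, which yields the claim. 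The only real obstacle I anticipate is the central verification for $\omega^{(x)}$: this is entirely elementary but requires the small case distinction modulo $3$ to identify the central $x$ with the appropriate image $\tau^n(x)$, $\tau^n(y)$ or $\tau^n(z)$, and to confirm that no forbidden pattern appears at the join of the two palindromic halves.
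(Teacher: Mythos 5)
Your proposal is correct and is essentially the paper's argument: the paper proves this corollary in one line as ``a direct consequence of Proposition \ref{Isolation-eta} and the previous theorem'', and your write-up simply fills in that deduction (transfer from $\eta$ to the symmetric word $\omega^{(x)}$, then to arbitrary $\omega$ via minimality, continuity and uniqueness of $P^{(n)}$, using that the forbidden patterns have bounded length). Two cosmetic points: $\omega^{(x)}$ is symmetric around position $0$, not $-1$, and in the case $n\equiv 0 \pmod 3$ one should also record that $s_{\pm 2}=\tau^n(r_2)=\tau^n(y)\neq\tau^n(x)$ so the central run has length exactly three, which is the ``join'' check you already announce.
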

\begin{proof} By Proposition
\ref{Isolation-eta} the statement is true for the $n$-decomposition
of $\eta$. By construction it then easily  follows   for the
$n$-decomposition of $\omega^{(x)}$ as well. By minimality   any
$\omega\in \varOmega_\tau$ can be approximated by elements  of the
form  $T^k \omega^{(x)}$ with suitable $k\in\ZZ$.  Now continuity
and equivariance  of the $n$-partition shown  in the previous
theorem easily give  the desired statement.
\end{proof}



\subsection{Palindromes and reflection symmetry in
  $\mbox{Sub}_\tau$ }\label{section-reflection}
In this section we  study  palindromes in $\mbox{Sub}_\tau$  and the
corresponding reflection  symmetries.

For a finite word $w = w_1\ldots w_n$ we define the reflected word
$w^R$ by $w^R = w_n \ldots w_1$. There are two different ways to
extend  this operation to two  sided infinite words. One way is to
associate to a two-sided infinite sequence $\omega$ the   sequence
$\omega^R$ defined by $\omega^R (n) := \omega (-n)$. Thus, the
operation $R$ on the two sided infinite words is just the reflection
at the origin. The other way  is to associate to a two sided
infinite sequence $\omega$ the sequence $\widetilde{\omega}$ given
by $\widetilde{\omega} (n) := \omega (1-n)$. Thus, for $\omega =
...\omega_{-1} \omega_0 | \omega_1 \omega_2...$ we have
$\widetilde{\omega} = ...\omega_2 \omega_1 | \omega_0
\omega_{-1}...$ In this sense, $\widetilde{}$ is just the reflection
at $|$. Of course, both reflections are related. In fact, we have
$\widetilde{\omega} = T^{-1} \omega^R .$ We now turn to studying how
$\varOmega_\tau$ is compatible with the reflection operations $R$
and $\; \widetilde{}$.

\begin{Proposition} \label{Inversion} For every $\omega\in\varOmega_\tau$ the element
$\widetilde{\omega}$ also belongs to $\varOmega_\tau$, and the map
$$\varOmega_\tau\longrightarrow \varOmega_\tau,\;\omega\mapsto
\widetilde{\omega},$$ is a homeomorphism without fixed point.
\end{Proposition}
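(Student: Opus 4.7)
The plan is to verify in turn: (i) that $\widetilde{\omega}\in\varOmega_\tau$ whenever $\omega\in\varOmega_\tau$; (ii) that $\omega\mapsto\widetilde{\omega}$ is a homeomorphism; and (iii) that it has no fixed point. Step (iii) will be the main obstacle, since (i) and (ii) are essentially soft.

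First, for (i), I would reduce the question to showing that $\mbox{Sub}_\tau$ is closed under word reversal. This is because $\mbox{Sub}(\widetilde{\omega})=\{w^R:w\in\mbox{Sub}(\omega)\}$, and $\mbox{Sub}(\omega)\subset\mbox{Sub}_\tau$. Every word in $\mbox{Sub}_\tau$ is a subword of some $p^{(n)}$: the elements of $\mbox{Sub}(\tau^n(a))=\mbox{Sub}(p^{(n)})$ are already of this form, and the single letters $x,y,z$ each occur in some $p^{(n)}$ by the third basic property listed after the definition of $\mbox{Sub}_\tau$. Since each $p^{(n)}$ is a palindrome by the remark following the recursion formula $(RF)$, reversing a subword of $p^{(n)}$ yields another subword of $p^{(n)}$, so $\mbox{Sub}_\tau$ is closed under reversal and thus $\widetilde{\omega}\in\varOmega_\tau$.

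Second, for (ii), the formula $\widetilde{\omega}(n)=\omega(1-n)$ defines a continuous map in the product topology, because each coordinate of $\widetilde{\omega}$ depends continuously on a single coordinate of $\omega$. Moreover, the map is an involution: $\widetilde{\widetilde{\omega}}(n)=\widetilde{\omega}(1-n)=\omega(1-(1-n))=\omega(n)$. A continuous involution on the compact Hausdorff space $\varOmega_\tau$ is automatically a homeomorphism.

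For (iii), the main obstacle, I would exploit the alternation of the letter $a$ with letters of $\{x,y,z\}$ forced by the substitution. From $\tau(a)=axa$ and the $1$-decomposition $\eta=p^{(1)}r_1 p^{(1)}r_2\cdots$ with $p^{(1)}=axa$ and $r_j\in\{x,y,z\}$, the letter $a$ and a letter from $\{x,y,z\}$ strictly alternate along $\eta$, so $aa\notin\mbox{Sub}_\tau$. Since $\mbox{Sub}(\omega)=\mbox{Sub}_\tau$ for every $\omega\in\varOmega_\tau$ by minimality, the positions of $a$ in $\omega$ form exactly one of the two cosets $2\ZZ$ or $2\ZZ+1$; in particular $\omega_0\neq\omega_1$. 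But a fixed point $\omega=\widetilde{\omega}$ would require $\omega_0=\widetilde{\omega}(0)=\omega(1)=\omega_1$, a contradiction.
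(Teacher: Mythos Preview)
Your proof is correct and follows essentially the same approach as the paper's: invariance of $\mbox{Sub}_\tau$ under reversal via the palindromicity of the $p^{(n)}$ for part (i), and the fact that exactly one of $\omega_0,\omega_1$ equals $a$ for part (iii). The paper's proof is a terse two-sentence version of what you wrote; your additional explanation of the homeomorphism property and your derivation of the alternation of $a$ with $\{x,y,z\}$ from the $1$-decomposition of $\eta$ simply fill in details the paper leaves to the reader.
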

\begin{proof} As all $p^{(n)}$'s are palindromes, the set
$\mbox{Sub}_\tau$ is invariant under $R$, i.e. $R(\mbox{Sub}_\tau)
=\mbox{Sub}_\tau$. This easily gives the first statement. As exactly
one of the letters $\omega_0$ and $\omega_1$ is $a$, the map
$\widetilde{}$ cannot have a fixed point.
\end{proof}

By the previous proposition, the map  $\; \widetilde{}\; $ does not
have fixed points on $\varOmega_\tau$. However, there are  words
which are fixed points of   $R$. We say that $\omega$ is
\textit{symmetric around $p\in \ZZ$} if $\omega_{p + k} =
\omega_{p-k}$ for all $k\in \NN$ (i.e. if $T^p \omega$ is a fixed
point of  $R$). Then, clearly
 each $\omega^{(s)}$, $s\in\{x,y,z\}$,  is symmetric around $0$. In
 fact, the converse is valid as well  as shown in the next theorem.

\begin{Theorem}\label{Symmetric-word} Let $\omega\in\varOmega_\tau$ be symmetric around
$p\in  \ZZ$. Then, there exists an $s\in\{x,y,z\}$ with $\omega =
T^p \omega^{(s)}$.
\end{Theorem}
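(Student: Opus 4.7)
The plan is to reduce to the case where the symmetry center is at the origin, and then use uniqueness of the $n$-partitions from Theorem~\ref{theorem-n-partition} to identify $\omega$ with one of the $\omega^{(s)}$. Each $\omega^{(s)}$ is symmetric around $0$ (its $n$-decomposition is symmetric and $p^{(n)}$ is a palindrome), so after applying the appropriate power of $T$ it suffices to prove: if $\omega\in\varOmega_\tau$ is symmetric around $0$, then $\omega=\omega^{(s)}$ for some $s\in\{x,y,z\}$; the desired identity then follows by undoing the shift.

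Fix $n$ and let $P^{(n)}(\omega)\in\ZZ/2^{n+1}\ZZ$ denote the unique $n$-partition. If $q$ is a separator position of $\omega$, meaning $\omega_q\in\{x,y,z\}$ and $\omega_{q+1}\ldots\omega_{q+2^{n+1}-1}=p^{(n)}$, then applying the symmetry $\omega_k=\omega_{-k}$ together with the palindromicity of $p^{(n)}$ (immediate from the recursion formula $(RF)$ by induction) shows that $-q-2^{n+1}$ is also a separator. Hence $P^{(n)}(\omega)$ is fixed by the involution $[q]\mapsto[-q]$ on $\ZZ/2^{n+1}\ZZ$, which forces $P^{(n)}(\omega)\in\{0,2^n\}\pmod{2^{n+1}}$.

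Next, rule out the case $P^{(n)}(\omega)\equiv 2^n\pmod{2^{n+1}}$ by comparing consecutive levels. From $(RF)$, $p^{(n+1)}=p^{(n)}\tau^n(x)p^{(n)}$, so every separator for the $(n+1)$-partition is automatically a separator for the $n$-partition, giving $P^{(n+1)}(\omega)\equiv P^{(n)}(\omega)\pmod{2^{n+1}}$. If $P^{(n)}(\omega)\equiv 2^n\pmod{2^{n+1}}$ held, then $P^{(n+1)}(\omega)\in\{2^n,\,3\cdot 2^n\}\pmod{2^{n+2}}$, which is disjoint from the set $\{0,2^{n+1}\}\pmod{2^{n+2}}$ dictated by the symmetry argument applied at level $n+1$. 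This contradiction forces $P^{(n)}(\omega)\equiv 0\pmod{2^{n+1}}$ for every $n$, so $0$ is a separator at every level. Setting $s:=\omega_0\in\{x,y,z\}$, we obtain $\omega_1\ldots\omega_{2^{n+1}-1}=p^{(n)}$ and, by palindromicity combined with symmetry, $\omega_{-2^{n+1}+1}\ldots\omega_{-1}=p^{(n)}$. Thus $\omega$ and $\omega^{(s)}$ coincide on $[-2^{n+1}+1,\,2^{n+1}-1]$ for every $n$, so $\omega=\omega^{(s)}$.

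The main obstacle is this level-to-level argument ruling out $P^{(n)}(\omega)\equiv 2^n$: it requires tracking the refinement from $P^{(n+1)}$ to $P^{(n)}$ via the explicit recursion $p^{(n+1)}=p^{(n)}\tau^n(x)p^{(n)}$ and combining this with the symmetry-derived constraint at level $n+1$. The remaining steps, namely the reduction to $p=0$, the palindromicity of $p^{(n)}$, and the final convergence to $\omega^{(s)}$, are essentially formal once uniqueness of the $n$-partition is in hand.
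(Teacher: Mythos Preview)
Your proof is correct and takes a genuinely different route from the paper's. The paper first shows directly that $\omega_0\neq a$ by looking at the three letters $\omega_{-1}\omega_0\omega_1$ and invoking Corollary~\ref{Isolation} (the derived sequence has $y,z$ isolated and $x$ only in powers $1$ or $3$), then sets $s=\omega_0$ and proves by induction on $n$ that the $n$-decomposition has a ``break point'' at the origin, again using Corollary~\ref{Isolation} at each step to reproduce the same case analysis one level higher. Your argument bypasses Corollary~\ref{Isolation} entirely: you exploit uniqueness of the $n$-partition together with the palindromicity of $p^{(n)}$ to see that symmetry forces $P^{(n)}(\omega)\in\{0,2^n\}$, and then use the compatibility $P^{(n+1)}(\omega)\equiv P^{(n)}(\omega)\pmod{2^{n+1}}$ coming from $(RF)$ to eliminate the $2^n$ option. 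This is cleaner and more structural, avoiding the letter-by-letter case analysis; the paper's approach, on the other hand, stays closer to the combinatorics of the derived sequence and does not need the level-to-level comparison.
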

\begin{proof} Without loss of generality we can assume  $p =0$.
We consider the $n$-decomposition of $\omega$. We say that the
$n$-decomposition has a \textit{break point} at the origin if the
$n$-decomposition around the origin looks like $p^{(n)} s |p^{(n)}$
with $|$ denoting the position of origin.  It suffices to show that
the $n$-decomposition has a break point at the origin for every
$n\in\NN\cup \{0\}$. This is done by induction:

$n=0$: Consider the word $\omega_{-1}\omega_0\omega_1$ and note that
$\omega_{-1} = \omega_1$ must hold by symmetry. Assume that
$\omega_0  =a $ holds. As the letter  $a$ occurs isolated, we infer
that $\omega_{-1} = \omega_1$ cannot be $a$. Hence, it must be $x,y$
or $z$.  After deletion of all the letters  $a$ from $\omega$, the
letters $y,z$ occur isolated and $x$ occurs either isolated or with
power $3$  by Corollary \ref{Isolation}. Therefore we get  a
contradiction to $\omega_0 = a$.  Set $s:=\omega_0\in\{x,y,z\}$.
Then,  $\omega_{-1}\omega_0\omega_1= as | a$ and the
$0$-decomposition has a break point.

$n\Longrightarrow n+1$: As the $n$-decomposition of $\omega$ has a
break point at the origin and $\omega$ is symmetric, the
$n$-decomposition of $\omega$ around the origin looks like
$$\omega = ... t \,  p^{(n)} s |p^{(n)}\,  t ...$$
(with a suitable $t\in\{x,y,z\}$). We have to  show that $p^{(n)} s
|p^{(n)}$ does not become  $p^{(n+1)}$ in the $(n+1)$-decomposition
of $\omega$. Assume the contrary. Then the $(n+1)$-decomposition of
$\omega$ looks like
$$\omega = ... t\,  p^{(n+1)} \, t ...$$
and  the  letter $t$ is not isolated in the $(n+1)$-decomposition.
Hence, by Corollary \ref{Isolation},  the letter $t$ occurs with a
third power and the $(n+1)$-decomposition of $\omega$ looks like
$$\omega = t p^{(n+1)} t (p^{(n)} s |p^{(n)}) t p^{(n+1)} v$$
or
$$\omega =v p^{(n+1)} t (p^{(n)} s |p^{(n)}) t p^{(n+1)} t$$
with $v\in \{x,y,z\}$ with $v\neq t$, where we have written
$(p^{(n)} s | p^{(n)})$ instead  of $p^{(n+1)}$ to denote the
position of the origin. This, however, is a contradiction to the
symmetry of $\omega$.
\end{proof}

\begin{Corollary}\label{Orbital-uniqueness} Let
$\omega\in\varOmega$ be given. Then, the orbits $\{T^n \omega :
n\in\ZZ\}$ and $\{T^n \widetilde{\omega} :n\in \ZZ\}$ are disjoint
 unless $\omega = T^p \omega^{(s)}$ for some  $s\in\{x,y,z\}$ and  $p\in\ZZ$.
\end{Corollary}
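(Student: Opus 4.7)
The plan is to reduce the statement to a single reflection identity of the form $\omega^R = T^k\omega$ and then analyze the parity of $k$. First I would use the identity $\widetilde\omega = T^{-1}\omega^R$ from Section~\ref{section-reflection}: if the orbits meet, there are $n,m\in\ZZ$ with $T^n\omega = T^m\widetilde\omega = T^{m-1}\omega^R$, hence $\omega^R = T^k\omega$ where $k:=n-m+1$. Coordinate-wise this reads $\omega(-j) = \omega(j+k)$ for every $j\in\ZZ$.

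I would then split according to the parity of $k$. In the even case $k = 2p$, substituting $j = -(p+\ell)$ rearranges the identity into $\omega(p+\ell) = \omega(p-\ell)$ for all $\ell\in\ZZ$, so $\omega$ is symmetric around the integer $p$. Theorem~\ref{Symmetric-word} then yields $s\in\{x,y,z\}$ with $\omega = T^p\omega^{(s)}$, which is exactly the conclusion.

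The odd case is where I expect the main work, and the plan is to show it cannot occur. When $k = 2p+1$, setting $j = -p$ in the identity gives $\omega(p) = \omega(p+1)$, i.e.\ two consecutive letters of $\omega$ must coincide. The key observation is that this never happens in $\varOmega_\tau$: from the recursion $(RF)$ and the fact that every other letter of $\eta$ is an $a$, the fixed point has the pattern $a\,?\,a\,?\,a\,?\,\ldots$ with $?\in\{x,y,z\}$, so the two-letter word $ss$ does not belong to $\mbox{Sub}(\eta) = \mbox{Sub}_\tau$ for any single letter $s$. Since $\mbox{Sub}(\omega)\subset\mbox{Sub}_\tau$ for every $\omega\in\varOmega_\tau$, the equality $\omega(p)=\omega(p+1)$ is impossible, ruling out the odd case.

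The main obstacle is therefore the odd case, but it dissolves once one notes the strict alternation of $a$'s with letters from $\{x,y,z\}$ in $\eta$. Everything else is bookkeeping: rephrasing orbital intersection as a shifted reflection identity, and then handing the even case directly to Theorem~\ref{Symmetric-word}.
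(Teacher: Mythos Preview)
Your argument is correct and follows the same route as the paper, which simply asserts that the two orbits can only intersect if $\omega$ is symmetric around some integer and then invokes Theorem~\ref{Symmetric-word}. Your parity split makes explicit precisely the step the paper hides behind ``it is not hard to see'': the odd case would force two equal consecutive letters, which the alternating $a$-pattern rules out.
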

\begin{proof}
It is not hard to see that the two orbits in question can only
intersect if $\omega$ is symmetric. Thus, the statement of the
corollary follows from the previous theorem.
\end{proof}

\subsection{Powers in $\mbox{Sub}_\tau$}
In this section we study  powers in  $\mbox{Sub}_\tau$ and determine
the index (i.e. the   supremum over all powers).




Our first result gives the existence of three-blocks followed by a
(long) prefix of the same  block. Thus, it 'almost' gives the
existence of a four-block.

\begin{Lemma}[Almost four-blocks in  $\omega^{(s)}$] \label{Three-blocks-in-omega-s} Let $s\in \{x,y,z\}$ be
given and let $\omega^{(s)}$ be  the unique word with $\omega^{(s)}
= ... p^{(n)} s|p^{(n)}...$ for all $n$ (where $|$ denotes the
position of the origin). Then,
$$\omega^{(s)} = ...p^{(3n + k)} s p^{(3n +k)} s| p^{(3n +k)} s p^{(3n +k)}...$$
for all $n\in \NN\cup \{0\}$, where $k=0$ for $s = x$, $k=1$ for $s
= y$ and $k=2$ for $s = z$.
\end{Lemma}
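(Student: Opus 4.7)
The plan is to reduce the statement to a single application of the defining property of $\omega^{(s)}$ at a carefully chosen level, combined with the recursion formula $(RF)$.

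First, I would recall two ingredients already at hand. On the one hand, by Lemma \ref{omega-x-y-z}, the element $\omega^{(s)}$ is characterized by
$$\omega^{(s)} = \ldots p^{(m)}\, s\, |\, p^{(m)}\ldots$$
for \emph{every} $m\in\NN\cup\{0\}$. On the other hand, the recursion formula $(RF)$ gives
$$p^{(m+1)} \;=\; p^{(m)}\,\tau^m(x)\,p^{(m)},$$
where $\tau^m(x)$ cycles through $x,y,z$ with period three according to $m\bmod 3$.

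The key observation is that the choice of $k$ in the statement is exactly the one that makes $\tau^{3n+k}(x)=s$:
\begin{itemize}
\item if $s=x$ and $k=0$, then $\tau^{3n}(x)=x=s$;
\item if $s=y$ and $k=1$, then $\tau^{3n+1}(x)=y=s$;
\item if $s=z$ and $k=2$, then $\tau^{3n+2}(x)=z=s$.
\end{itemize}
Hence in each of the three cases, $(RF)$ at level $m=3n+k$ yields the identity
$$p^{(3n+k+1)} \;=\; p^{(3n+k)}\, s\, p^{(3n+k)}.$$

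Now I would apply the defining property of $\omega^{(s)}$ at level $m=3n+k+1$, which reads
$$\omega^{(s)} = \ldots p^{(3n+k+1)}\, s\, |\, p^{(3n+k+1)}\ldots,$$
and then substitute the above expansion of $p^{(3n+k+1)}$ on both sides of the origin to obtain precisely
$$\omega^{(s)} = \ldots p^{(3n+k)}\, s\, p^{(3n+k)}\, s\, |\, p^{(3n+k)}\, s\, p^{(3n+k)}\ldots,$$
which is the claim. There is no real obstacle: the content of the lemma is just that the level-$(m+1)$ description of $\omega^{(s)}$ can be re-expanded via $(RF)$ into a level-$m$ description that exposes an additional copy of the letter $s$ on each side of the origin, and this is possible precisely when the recursion places $s$ (rather than one of the other two non-$a$ letters) at the joining position, i.e.\ exactly in the three arithmetic progressions of indices prescribed by the statement.
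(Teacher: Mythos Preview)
Your proof is correct and follows the same idea as the paper: invoke the defining property of $\omega^{(s)}$ at a level one step above $3n+k$ and then expand via the recursion formula $(RF)$, using that $\tau^{3n+k}(x)=s$ for the prescribed $k$. In fact your write-up is more explicit than the paper's one-line sketch, which just says to start from the defining property and apply $(RF)$ ``the corresponding number of times.''
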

\begin{proof} From the definition of
$\omega^{(s)}$ we have $\omega^{(s)} = ...p^{(3 n + 3)} s| p^{(3n
+3)}$ for all $n\in\NN$. Now, the lemma follows after we apply  the
recursion formula $(RF)$ from page~\pageref{recursion} the
corresponding number of times.
\end{proof}

As shown in the next lemma, similar almost four-block structures
occur in $\lambda$-almost every element of $\varOmega_\tau$, where
$\lambda$ is the unique $T$-invariant probability measure on
$\varOmega_\tau$.

\begin{Lemma}[Almost four-blocks in almost every $\omega$] \label{Three-blocks-in-almost-every-omega} For
$\lambda$-almost every $\omega \in\varOmega_\tau$ there exist
sequences of words $(w_n)$, $(v_n)$ with $v_n$ prefix of $w_n$ for
each $n\in \NN$  and $|w_n| \to \infty$ and $\frac{|v_n|}{|w_n|}\to
1$ and  $\omega = ... w_n w_n | w_n v...$
\end{Lemma}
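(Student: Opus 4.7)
The plan is to extend the construction of almost four-blocks at the origin, established for the three symmetric points $\omega^{(s)}$ in the preceding lemma, to $\lambda$-almost every $\omega$, using the ergodic theory of $(\varOmega_\tau, T)$.

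First I would verify that the set $A \subset \varOmega_\tau$ of $\omega$ satisfying the lemma's conclusion is measurable and is, up to $\lambda$-null sets, $T$-invariant. Indeed, if $\omega = \ldots w_n w_n | w_n v_n \ldots$ at the origin, then $\omega$ carries period $|w_n|$ on a window of length $3|w_n|+|v_n|$ around the origin, and applying $T$ transfers this periodicity to $T\omega$: setting $\tilde w_n$ equal to the cyclic shift of $w_n$ by one position and $\tilde v_n$ equal to the prefix of $\tilde w_n$ of length $|v_n|-1$, one obtains $T\omega \in A$ since $|w_n|\to\infty$ forces $|\tilde v_n|/|\tilde w_n|\to 1$. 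By the ergodicity of $\lambda$ (a consequence of the unique ergodicity of $(\varOmega_\tau,T)$), this gives $\lambda(A)\in\{0,1\}$.

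The main step is then to show $\lambda(A) > 0$. The combinatorial input is Corollary~\ref{Isolation}: for every $\omega$ and every scale $n$, the derived sequence at scale $n$ contains $\tau^n(x)\tau^n(x)\tau^n(x)$ triples with positive density $\rho$ independent of $n$ by self-similarity of $\tau$. Each such triple produces in $\omega$ a period-$2^{n+1}$ interval of length $4\cdot 2^{n+1}-1$. Whenever $\omega$'s origin lies in such an interval, the length-$2^{n+1}$ window of $\omega$ at positions $1, \ldots, 2^{n+1}$ — automatically a cyclic shift of $p^{(n)}\tau^n(x)$ — together with a long enough prefix of it provides the $w_n, v_n$ witnessing the pattern at the origin. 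Equidistribution of the phase $P^{(n)}(\omega)$ under $\lambda$ combined with the positive density $\rho$ shows that if one permits $|v_n|/|w_n| \geq 1 - \epsilon_n$ for some $\epsilon_n \to 0$, the $\lambda$-measure of the favorable event at scale $n$ is bounded below by a quantity of order $\rho\epsilon_n$; choosing $\epsilon_n = 1/n$ the measures sum to infinity, and a Borel--Cantelli-type argument, exploiting the weak correlations across scales coming from the 2-adic factor map $\varOmega_\tau \to \ZZ_2$, yields $\lambda(A) > 0$ and hence $\lambda(A) = 1$.

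The principal obstacle will be the positivity step: a direct Borel--Cantelli applied to the rigid event ``$(p^{(n)}\tau^n(x))^3 p^{(n)}$ at origin'' produces measures of order $2^{-n}$, which are summable and would incorrectly give $\lambda(A) = 0$. The resolution lies in the flexibility of the lemma's statement — $w_n$ is unconstrained and $v_n$ is an arbitrary prefix — so that cyclic shifts of $p^{(n)}\tau^n(x)$ are admissible as $w_n$, turning each scale into a $\lambda$-positive-measure event of order $\rho/n$, and it is the correlations across scales controlled by the substitutive (equivalently, 2-adic) structure that make a divergent-series Borel--Cantelli applicable.
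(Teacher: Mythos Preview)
Your reduction via ergodicity and essential $T$-invariance of the target set $A$ is sound, and your measure estimate $\lambda(E_n)\asymp\rho\,\epsilon_n$ for the favourable event at scale $n$ (origin positioned so that $|v_n|/|w_n|\geq 1-\epsilon_n$) is correct. The genuine gap is the final step: you pass from $\sum_n\lambda(E_n)=\infty$ to $\lambda(\limsup E_n)>0$ by a ``Borel--Cantelli-type argument, exploiting the weak correlations across scales coming from the 2-adic factor map'', but the second Borel--Cantelli lemma requires a quantitative decorrelation hypothesis, and the events $E_n$ here are \emph{not} weakly correlated in any evident way. The phase $P^{(n+1)}(\omega)$ determines $P^{(n)}(\omega)$, so the events at consecutive scales are strongly dependent; to run Borel--Cantelli you would need a pairwise bound of the type $\lambda(E_n\cap E_m)\leq C\,\lambda(E_n)\lambda(E_m)$, and nothing in the 2-adic structure hands you this.

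The repair is to abandon the scale-dependent threshold $\epsilon_n\to 0$. Fix $\epsilon>0$; then by linear repetitivity the almost-four-block $(\tau^n(ax))^3\,\tau^n(a)$ has density at least $c/|\tau^n(ax)|$, and since roughly $\epsilon\,|\tau^n(ax)|$ positions of the origin within it yield the pattern with $|v_n'|/|w_n'|\geq 1-\epsilon$, one gets $\lambda(E_n^{(\epsilon)})\geq c\,\epsilon$ \emph{uniformly} in $n$. Reverse Fatou then gives $\lambda(\limsup_n E_n^{(\epsilon)})\geq c\,\epsilon>0$ with no independence assumption whatsoever, and your invariance/ergodicity step forces $\lambda(A_\epsilon)=1$, where $A_\epsilon$ is the set of $\omega$ admitting a sequence with $\liminf|v_n|/|w_n|\geq 1-\epsilon$. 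Intersecting over $\epsilon=1/k$ and diagonalising gives $\lambda(A)=1$. This is precisely the mechanism behind Lemma~4.2 of \cite{DL2}, which the paper simply quotes; once repaired as above, your argument reproduces its proof.
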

\begin{proof} From the previous lemma (with $k = n =0$) we infer
that the word $axaxaxa$ belongs to $\mbox{Sub}_\tau$ i.e. the word
$u = w^3 v$ with $w = ax$ and $v = a$ occurs in $\eta$. Thus, all
words of the form $\tau^n (w^3 v)$ will occur in $\eta$ as well.
 Clearly,  $|\tau^n (w)| = |\tau^n (a)| + |\tau^n (x)| = |\tau^n (v)| +
 1$ and $|\tau^n (w)|\to\infty$ and $\frac{|\tau^n (v)|}{|w|}\to 1$
 follows. Now, the  desired statements follows from  Lemma 4.2 in \cite{DL2}.
The Lemma 4.2 of \cite{DL2} only deals with almost sure existence of
$w_n$ with the desired properties. However, close inspection of the
proof shows existence of $v_n$ as well.
\end{proof}

The preceding  results show  that there is quite a supply of
three-blocks at hand for elements of $\varOmega_\tau$. This will be
used in the proof of one of  our main results  on  spectral theory.
The preceding results also naturally lead to the question whether
four blocks are present. It turns out that this is not the case.
While this will not be used in the proofs of our main results we
next include a discussion.





\begin{Proposition} \label{Prop-Alignment}
Consider a natural number $n$ and $s\in \{x,y,z\}$. If $p^{(n)} s
p^{(n)}$ occurs in $\eta$ at the position $l$ (i.e. $\eta_l
\eta_{l+1} \ldots \eta_{l + | p^{(n)} s p^{(n)}  | -1} = p^{(n)} s
p^{(n)}$ holds), then $l$ is of the form $1 + k 2^{n+1}$ for some
$k\in\NN\cup\{0\}$. This means that if $ p^{(n)} s p^{(n)}$ occurs
somewhere in $\eta$ then both of its words $p^{(n)}$ actually agree
with blocks $p^{(n)}$ appearing in the $n$-decomposition $\eta =
p^{(n)} r_1^{(n)} p^{(n)} r_2^{(n)} p^{(n)} ....$
\end{Proposition}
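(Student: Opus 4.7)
The plan is to prove the proposition by induction on $n \geq 0$.

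For the base case $n = 0$, we have $p^{(0)} s p^{(0)} = asa$. Since every other letter of $\eta$ is an $a$ and these $a$'s occupy precisely the odd positions, the condition $\eta_l = a$ forces $l$ to be odd, so $l = 1 + 2k$ for some $k \in \NN \cup \{0\}$.

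For the induction step, assume the statement at level $n$ and suppose $p^{(n+1)} s p^{(n+1)}$ occurs at position $l$ in $\eta$. Using the recursion formula $(RF)$ to write $p^{(n+1)} = p^{(n)} \tau^n(x) p^{(n)}$, we expand
\[
p^{(n+1)} s p^{(n+1)} = p^{(n)} \tau^n(x) p^{(n)} s p^{(n)} \tau^n(x) p^{(n)}.
\]
The prefix $p^{(n)} \tau^n(x) p^{(n)}$ occurs at position $l$ and has the form $p^{(n)} t p^{(n)}$ with $t = \tau^n(x) \in \{x,y,z\}$. The induction hypothesis therefore yields $l = 1 + k \cdot 2^{n+1}$ for some $k \in \NN \cup \{0\}$. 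Consequently, the four copies of $p^{(n)}$ inside $p^{(n+1)} s p^{(n+1)}$ align with the $(k+1)$-th through $(k+4)$-th blocks of the $n$-decomposition $\eta = p^{(n)} \tau^n(r_1) p^{(n)} \tau^n(r_2) \ldots,$ and reading off the three separator letters gives $\tau^n(r_{k+1}) = \tau^n(x)$, $\tau^n(r_{k+2}) = s$, $\tau^n(r_{k+3}) = \tau^n(x)$. Injectivity of $\tau^n$ on $\{x,y,z\}$ then yields $r_{k+1} = r_{k+3} = x$.

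It remains to show that $k$ is even, which produces $l = 1 + (k/2) \cdot 2^{n+2}$ as desired. Comparing the $n$- and $(n+1)$-decompositions of $\eta$ via $p^{(n+1)} = p^{(n)} \tau^n(x) p^{(n)}$ and using uniqueness yields the recursion $r_{2j-1} = x$ and $r_{2j} = \tau(r_j)$ for every $j \in \NN$. Suppose for contradiction that $k$ is odd; then both $k+1$ and $k+3$ are even, and the recursion combined with $r_{k+1} = r_{k+3} = x$ and $\tau(z) = x$ gives $r_{(k+1)/2} = r_{(k+3)/2} = z$. Since $(k+3)/2 = (k+1)/2 + 1$, the sequence $(r_j)$ would contain two consecutive $z$'s, contradicting Proposition \ref{Isolation-eta}, which asserts that every $z$ in $(r_j)$ is isolated.

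The main obstacle is identifying the right combinatorial constraint to exclude odd values of $k$. The key observation is that the two copies of $p^{(n+1)}$ in $p^{(n+1)} s p^{(n+1)}$ force two separators of value $\tau^n(x)$ in the $n$-level derived sequence at positions $k+1$ and $k+3$; via the doubling recursion $r_{2j} = \tau(r_j)$ these unfold, when $k$ is odd, to two adjacent letters $z$ in $(r_j)$, at which point the isolation property of $z$ from Proposition \ref{Isolation-eta} closes the argument.
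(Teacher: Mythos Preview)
Your proof is correct and follows the same inductive skeleton as the paper: induct on $n$, use the inductive hypothesis to align the four copies of $p^{(n)}$ with the $n$-decomposition, and then rule out the ``shifted'' alignment via the isolation property of Proposition~\ref{Isolation-eta}.

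The only difference is in how the contradiction is extracted. The paper splits into two cases according to whether $s$ equals $t=\tau^n(x)$: if $s\neq t$ the shifted alignment is impossible because the middle separator would have to be $t$; if $s=t$ the shifted alignment would force at least four consecutive $t$'s in the level-$n$ separator sequence, contradicting the ``at most three in a row'' clause of Corollary~\ref{Isolation}. You instead avoid the case split by invoking the self-similar recursion $r_{2j-1}=x$, $r_{2j}=\tau(r_j)$ to pull the information $r_{k+1}=r_{k+3}=x$ down to level~$0$, obtaining two adjacent $z$'s and contradicting the isolation of $z$ directly. Your route is a bit cleaner (no case distinction) and uses only Proposition~\ref{Isolation-eta} rather than the more general Corollary~\ref{Isolation}; the paper's route stays at level~$n$ throughout. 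Both arguments are short and rest on the same combinatorial input.
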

\begin{proof} This follows by induction on $n$. The case $n=0$ is
clear. Let us assume that the statement holds for $n$. Consider an
occurrence of
$$ w = p^{(n+1)} s p^{(n+1)} = p^{(n)} t p^{(n)} s p^{(n)} t p^{(n)}$$
with suitable letters $s$ and $t$ in $\{x,y,z\}$. By assumption the
$p^{(n)}$ are well aligned with the $n$-decomposition.

\smallskip

\textit{Case 1: $s \neq t$.} By  $s \neq t$,  both $p^{(n)} t
p^{(n)}$ in the above formula for $w$  will become a $p^{(n+1)}$ in
the $(n+1)$-decomposition. This readily gives the statement.

\smallskip

\textit{Case 2: $s=t$.} By Corollary \ref{Isolation}, there can be
no more than three occurrences of $s =t$ in a row. Thus, again the
first and the last $p^{(n)} t p^{(n)}$ will become a $p^{(n+1)}$ in
the $(n+1)$-decomposition and the desired statement follows.
\end{proof}

If $w$ is a finite word in $\mbox{Sub}_\tau$ and $v$ is a prefix of
$w$ and $N$ is a natural number we define the \textit{index of the
word $w$ in $w^N v$} by $N +  \frac{|v|}{|w|}$ and denote it by $Ind
(w, w^N v)$. We then define the \textit{index of the word $w$}  by
$$Ind (w):=\max\{ Ind(w,w^N v) : \mbox{$v$  prefix of $w$}, N\in \NN, w^N
v\in\mbox{Sub}_\tau\}.$$ As our subshift is minimal and aperiodic
the index of every word can easily be seen to be finite. We define
the \textit{index of the subshift} (also known as \textit{critical
exponent})  as
$$Ind (\varOmega_\tau) =\sup \{Ind (w) : w\in\mbox{Sub}_\tau\}.$$

\begin{Theorem}[Index of $\varOmega_\tau$]
The sequence $\eta$ does not contain a fourth power i.e. for every
$w\in\mbox{Sub}_\tau$ the inequality $Ind (w) < 4 $ holds.  In
particular, we have $ 4 =  Ind (\varOmega_\tau)$.
\end{Theorem}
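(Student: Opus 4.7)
The plan is to show $w^4\notin\mbox{Sub}_\tau$ for every nonempty $w$ (equivalently, $Ind(w)<4$) by a descent argument, and then to match the lower bound $Ind(\varOmega_\tau)\ge 4$ coming from Lemma \ref{Three-blocks-in-omega-s} against this upper bound.

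First I would record a structural observation on the derived sequence $(r_j)_{j\in\NN}$. Comparing the $0$-decomposition $\eta=a r_1 a r_2 a r_3\ldots$ with the fixed-point equation $\eta=\tau(\eta)=(axa)\,\tau(r_1)\,(axa)\,\tau(r_2)\ldots,$ one reads off $r_{2k-1}=x$ and $r_{2k}=\tau(r_k)$ for every $k\in\NN$. Thus the derived sequence itself carries an ``every other letter is $x$'' structure mirroring the ``every other letter is $a$'' structure of $\eta$ (and $\tau$ restricts to a bijection of $\{x,y,z\}$).

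Next I would argue by descent on $|w|$. Suppose $w^4\in\mbox{Sub}_\tau$ with $|w|=\ell\ge 1$. If $\ell$ is odd, then the four consecutive copies of $w$ in a given occurrence of $w^4$ have starting positions of alternating parities in $\eta$; since the $a$'s sit exactly at positions of one fixed parity, matching the four copies of $w$ letter by letter forces $a$ to appear at every position of $w$, so $w=a^\ell$. But the $a$'s in $\eta$ are isolated, so $aa\notin\mbox{Sub}_\tau$, a contradiction. Hence $\ell=2m$ is even, the letters of $w$ alternate between $a$ and elements of $\{x,y,z\}$, and extracting the $\{x,y,z\}$-letters of $w$ in order produces a word $u$ of length $m$ such that $u^4$ is a factor of $(r_j)$.

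Now the same type of reduction applies inside $(r_j)$. An odd length of $u$ forces $u=x^{|u|}$ by the same parity argument, whence $xxxx\in\mbox{Sub}_\tau$, which is excluded by Proposition \ref{Isolation-eta}. An even length $|u|=2m'$ instead yields, after extracting every other letter of $u$ (those lying at the $r_{2k}=\tau(r_k)$ positions) and applying $\tau^{-1}$ letterwise, a fourth power of length $m'$ in $(r_j)$ again. Iterating, the length is halved at each step until an odd value is reached, with the terminal case $|u|=1$ again being excluded by Proposition \ref{Isolation-eta} since it would produce $xxxx$, $yyyy$, or $zzzz$. This closes the descent and establishes $Ind(w)<4$ for all $w\in\mbox{Sub}_\tau$.

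Finally, Lemma \ref{Three-blocks-in-omega-s} supplies, for every $n$ and the appropriate $k\in\{0,1,2\}$, the factor $(p^{(3n+k)} s)^3 p^{(3n+k)}\in\mbox{Sub}_\tau$. Reading this as $w^3 v$ with $w=p^{(3n+k)}s$ and $v=p^{(3n+k)}$ a prefix of $w$ of length $|w|-1$ gives $Ind(w)\ge 3+(|w|-1)/|w|\to 4$ as $n\to\infty$. Combined with the upper bound, this yields $Ind(\varOmega_\tau)=4$. The main obstacle is the clean bookkeeping of the descent across the parity cases (of the starting positions of $w^4$ in $\eta$ and of $u^4$ in $(r_j)$, and of the successive lengths); once the recursion $r_{2k-1}=x$, $r_{2k}=\tau(r_k)$ is in place, the rest is mechanical.
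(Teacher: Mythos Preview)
Your proof is correct and takes a genuinely different route from the paper. The paper first handles $|w|\le 3$ by inspection, then for $2^{n+1}\le |w|\le 2^{n+2}-1$ invokes Proposition~\ref{Prop-Alignment} (the alignment lemma for occurrences of $p^{(n)}sp^{(n)}$ in the $n$-decomposition) to show that any cube forces $|w|=2^{n+1}$, after which a fourth power would yield a fourth power of a single letter in the derived sequence, contradicting Corollary~\ref{Isolation}. Your argument bypasses Proposition~\ref{Prop-Alignment} entirely: you observe the self-similarity $r_{2k-1}=x$, $r_{2k}=\tau(r_k)$ of the derived sequence and run a clean halving descent---odd length is killed immediately by parity, even length passes to a fourth power of half the length inside $(r_j)$, and the recursion terminates at a forbidden $xxxx$ (or $yyyy$, $zzzz$) via Proposition~\ref{Isolation-eta}. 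This is more elementary and entirely self-contained; the paper's route, on the other hand, yields the finer intermediate statement that any cube in $\mbox{Sub}_\tau$ has period a power of two. For the lower bound you use Lemma~\ref{Three-blocks-in-omega-s} directly, whereas the paper cites Lemma~\ref{Three-blocks-in-almost-every-omega}; both give $Ind(\varOmega_\tau)\ge 4$ in the same way. One cosmetic point: in your odd-$\ell$ case, the parity observation already gives an immediate contradiction (each $w_i$ would have to equal both $a$ and a non-$a$ letter), so the detour through $w=a^\ell$ and $aa\notin\mbox{Sub}_\tau$ is not needed, though it is not wrong.
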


\begin{proof} We  show that $\eta$ does not contain a fourth power.
The statement on the index is then a direct consequence from Lemma
\ref{Three-blocks-in-almost-every-omega}.

We consider the index of a word $w\in \mbox{Sub}_\tau$. We first
consider the case $|w|\leq 3$. If $|w| = 1$ or $|w| = 3$ then $w$
either starts and ends with an $a$ or starts and ends with a letter
which is not $a$. In both cases $w w$ can not occur as exactly every
other letter of $\eta$ is an $a$. If $w = 2$ we have $w\in
\{ax,ay,az, xa,ya,za\}$ and from Corollary \ref{Isolation} we infer
that $w w w w $ can not occur. We now consider the case $ |p^{(n)}|
+1 = 2^{n+1} \leq |w|\leq |p^{(n+1)}|$ for some $n\geq 1$. Assume
that $w w w$ occurs in $\mbox{Sub}_\tau$.

\smallskip

\textit{Claim.} The length of $w$ is given by $|w|= |p^{(n)}| +1 =
2^{n+1}$.

\textit{Proof of Claim.} Consider the $n$-decomposition of $\eta$ at
an occurrence of $www$. By 'chopping off' a suffix  of $ w w w $ of
length less than $|p^{(n)}| < |w|$ we infer that there exists a word
$v$ of the same length as $w$ such that $v v$ occurs in $\eta$ and
starts exactly at the beginning of a $p^{(n)}$ of the
$n$-decomposition of $\eta$ i.e. such that $ v v$ is a prefix  of a
part of the $n$-decomposition given by
$$ p^{(n)} r_k^{(n)} p^{(n)} r_{k+1}^{(n)}  p^{(n)} r_{k+2}^{(n)} p^{(n)}.$$
Consider the $n-1$ decomposition of this word. It is given by
$$ p^{(n-1)} s p^{(n-1)} r_k^{(n)} p^{(n-1)} s p^{(n-1)} r_{k+1}^{(n)}
p^{(n-1)} s p^{(n-1)} r_{k+2}^{(n)} p^{(n-1)} s p^{(n-1)}$$ with
$s\in\{x,y,z\}$ suitable. From Proposition \ref{Prop-Alignment}
(applied with $p^{(n)}$ instead of $p^{(n+1)}$) we then infer that
the words $v$ must be well-aligned with the occurrences of the
$p^{(n-1)}$. This gives $|v| =|p^{(n)}| +1$ or $|v| =|p^{(n)}| +1 +
|p^{(n-1)}|$. The second case can easily be seen to yield $s = r_k =
r_{k+1} = r_{k+2}$ and this contradicts Proposition \ref{Isolation}.
This proves the claim.

Assume now that $w$ with $|p^{(n)}| +1 = 2^{n+1} \leq |w|\leq
|p^{(n+1)}|$ is such that $ w w w w $ occurs in $\eta$. By the
claim, we then have  $|w| =|p^{(n)}| +1$. Considering the
$n$-partition of $\eta$ we infer the existence of a  fourth power of
a single letter in the sequence $r^{(n)}$ contradicting  Corollary
\ref{Isolation}.
\end{proof}


The remaining two parts of this section  are  not used in the proofs
of our main results on spectral theory. Instead they  contain some
discussion of further properties of the subshift $\varOmega_\tau$.
In this context we also note that it is possible to determine the
word complexity of the subshift  based on the decompositions
provided above. Details are given in \cite{GLN-survey,GLN-note}.

\subsection{Generating the fixed point $\eta$ by an automaton}
In this section we show that the fixed point of the substitution can
be generated by an automaton. This fits to the  general theory on
how to exhibit fixed points of substitutions by automata, see e.g.
the monograph \cite{AS} to which we also refer for background on
automata.

\begin{figure}[h!]
\begin{center}
\hspace*{-1cm}
\includegraphics[scale=0.3]{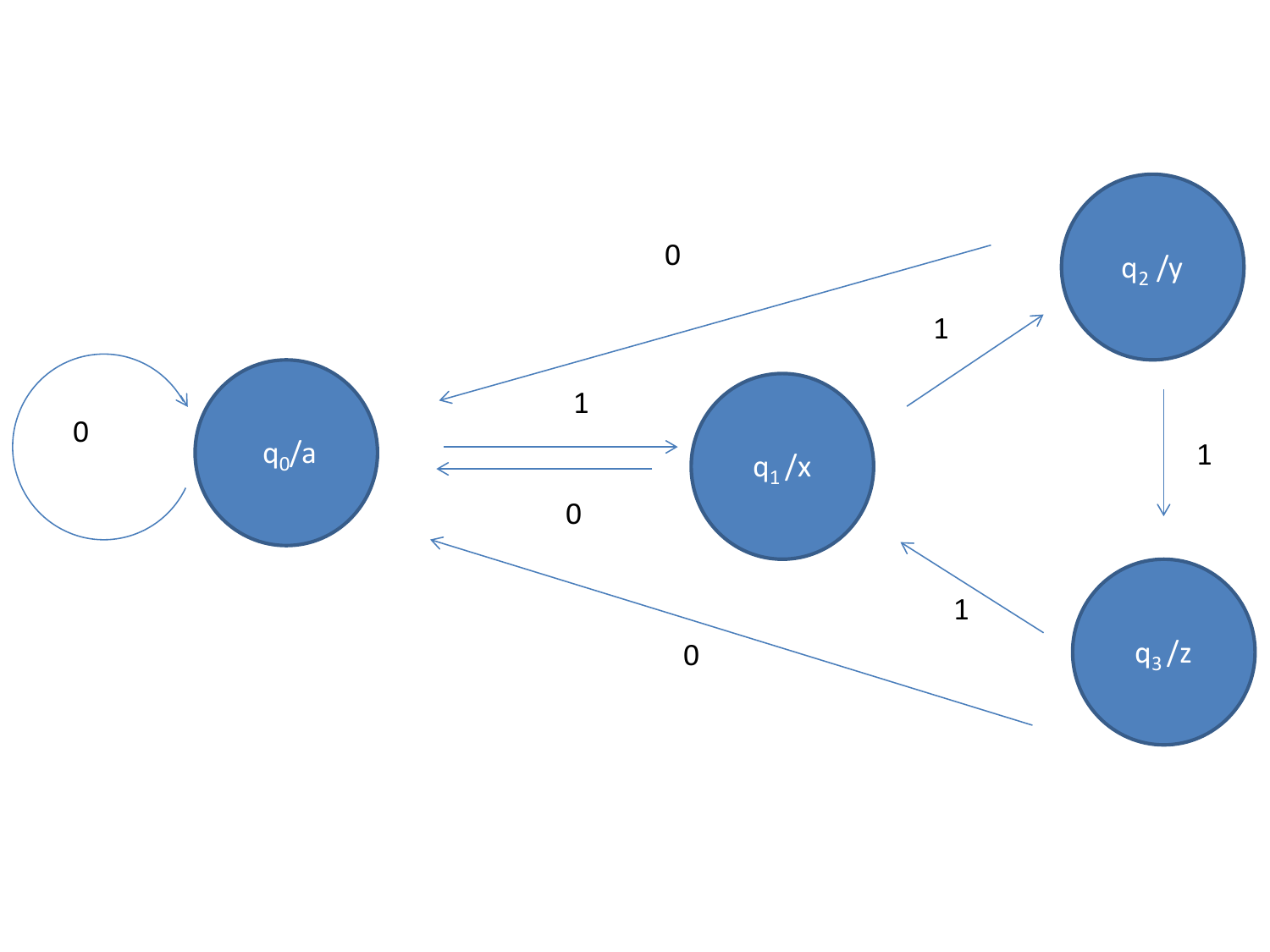}
\hspace*{-1cm}
 \caption{The automaton generating $\eta$}
\label{automaton}
\end{center}
\end{figure}

Consider the automaton $\mathcal{A}$ from figure \ref{automaton}. It
is an automaton over the alphabet $\{0,1\}$ with  four states
$q_0,q_1,q_2,q_3$  labeled by $a,x,y,z$ respectively. Then, the
infinite  sequence
$$\mathcal{A}_{q_0} : \NN\cup\{0\}\longrightarrow \{a,x,y,z\}$$
generated by the automaton with initial state $q_0$ is defined as
follows: Write $n\in \NN\cup \{0\}$ in its binary expansion as
$$n = x_0 2^i + x_1 2^{i-1} + \cdots + x_{i-1} 2 + x_i$$
with $i\in \NN\cup \{0\}$ and $x_j \in \{0,1\}$, $j = 0, \ldots, i$.
Consider now the path $p_n$ in the automaton  starting in $q_0$ and
following the sequence $x_0 x_1\ldots x_i$. Then, $\mathcal{A}_{q_0}
(n)$ is defined to be the label of the state where this path ends.
\begin{Theorem} The fixed point $\eta$ of $\tau$ agrees with
$\mathcal{A}_{q_0}$ (where the fixed point is considered as a map
from $\NN\cup \{0\}$ to $\{a,x,y,z\}$).
\end{Theorem}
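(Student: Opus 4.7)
The strategy is to recognise $\eta$ as a $2$-automatic sequence. Although the substitution $\tau$ itself is not uniform (it expands $a$ to length three while fixing the lengths of $x,y,z$), I will show that $\eta$ is nevertheless the fixed point of a genuine $2$-uniform substitution $\psi$ on $\{a,x,y,z\}$ with prefix $a$, and that the automaton $\mathcal{A}$ of Figure \ref{automaton} is the canonical automaton attached to $\psi$ by the standard substitution--automaton correspondence (see e.g.\ \cite{AS}). The theorem will then follow immediately from that correspondence.

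To construct $\psi$, I start from the $0$-decomposition $\eta = a\, r_{1}\, a\, r_{2}\, a\, r_{3}\,\cdots$, so that $\eta_{2k} = a$ for every $k\geq 0$ and $\eta_{2k+1} = r_{k+1}$. Applying $\tau$ to this decomposition and using $\tau(\eta) = \eta$ yields
$$
\eta \;=\; (axa)\,\tau(r_{1})\,(axa)\,\tau(r_{2})\,(axa)\,\tau(r_3)\,\cdots ,
$$
and comparing with $\eta = a\, r_{1}\, a\, r_{2}\, a\, r_3 \,\cdots$ forces the recursion $r_{2k-1} = x$ and $r_{2k} = \tau(r_{k})$. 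Translating this into a rule for the pair $(\eta_{2k},\eta_{2k+1})$ in terms of $\eta_{k}$: if $k$ is even then $\eta_{k}=a$ and $\eta_{2k+1} = r_{k+1} = x$; if $k$ is odd then $\eta_{k}\in\{x,y,z\}$ and $\eta_{2k+1} = r_{k+1} = \tau(r_{(k+1)/2}) = \tau(\eta_{k})$. Setting
$$
\psi(a) = ax,\quad \psi(x) = a\tau(x) = ay,\quad \psi(y) = a\tau(y) = az,\quad \psi(z) = a\tau(z) = ax,
$$
the above says $\psi(\eta_{k}) = \eta_{2k}\eta_{2k+1}$ for every $k\in\NN\cup\{0\}$; hence $\psi(\eta)=\eta$, and $\eta$ is the unique fixed point of $\psi$ beginning with $a$.

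With $\psi$ in hand, a direct inspection of Figure \ref{automaton} (under the identification $q_{0}\leftrightarrow a$, $q_{1}\leftrightarrow x$, $q_{2}\leftrightarrow y$, $q_{3}\leftrightarrow z$) shows that the transitions of $\mathcal{A}$ are exactly $s\xrightarrow{d}\psi(s)_{d}$ (the $d$-th letter of $\psi(s)$, $0$-indexed) for $s\in\{a,x,y,z\}$ and $d\in\{0,1\}$, and that the state labels coincide with the states themselves. By the classical theorem on automata generated by uniform substitutions (see e.g.\ \cite{AS}), the sequence produced by reading the base-$2$ expansion of $n$ (most significant digit first) through this automaton from the initial state $q_{0}$ is precisely the fixed point of $\psi$ beginning with $a$, namely $\eta$; this gives $\eta(n) = \mathcal{A}_{q_{0}}(n)$ for every $n\in\NN\cup\{0\}$. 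The main obstacle of the argument is the derivation of $\psi$ itself: because $\tau$ is not uniform there is no automaton directly attached to it, and the essential trick is to exploit the fact (established via the $0$-decomposition) that every other letter of $\eta$ is an $a$, which allows one to group the letters of $\eta$ in pairs and thereby obtain a genuinely $2$-uniform coding. Once $\psi$ is in place, the rest of the argument is entirely routine.
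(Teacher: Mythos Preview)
Your proof is correct but takes a genuinely different route from the paper's. The paper deduces the theorem from the Proposition that follows it, namely $f^{(n+1)}(q_i)=p^{(n)}\tau^{n+i}(x)$ for all $i\in\{0,1,2,3\}$, which is proved by a direct induction: one reads off from the automaton that $f^{(n+2)}(q_i)=f^{(n+1)}(q_0)\,f^{(n+1)}(q_{i+1})$ (indices mod $3$ on $\{1,2,3\}$) and then applies the recursion formula (RF) for the $p^{(n)}$. No appeal to a general theorem is made.

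Your argument instead constructs the $2$-uniform substitution $\psi$ with $\psi(\eta)=\eta$ and then invokes the standard uniform-substitution/automaton correspondence from \cite{AS}. It is worth noting that your $\psi$ is precisely the substitution $\zeta$ that the paper introduces later, in Section~\ref{section-recoding} (attributed to Durand), for an entirely different purpose --- to realise $(\varOmega_\tau,T)$ as the subshift of a \emph{primitive} substitution. You have in effect anticipated that section and put $\zeta$ to work here. Your approach is more conceptual and makes the $2$-automaticity of $\eta$ transparent; the paper's approach is more self-contained (no black-box theorem is cited) and has the side benefit of giving the explicit closed form $f^{(n+1)}(q_i)=p^{(n)}\tau^{n+i}(x)$ for all four initial states, not just $q_0$.
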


This theorem is an immediate consequence of the next proposition. To
state the proposition we will need some further pieces of notation.
For each $n\in \NN\cup \{0\}$ and each state $q$ of the automaton we
define $f^{(n)} (q)$ to be  the   word over $\{a,x,y,z\}$ of length
$2^{n}$ obtained  in the following way: Let $v_1,\ldots, v_{2^n}$ be
the list of  all words of length $n$  over $\{0,1\}$ in
lexicographic order (where $0< 1$). Consider now for each $k =
1,\ldots, 2^n$ the path  in the automaton starting at $q$ and
following the word $v_k$. Then, the $k$-th letter of $f^{(n)}(q)$ is
defined to be the label of the state where this path ends.

\begin{Proposition} We have $ f^{(n+1)} (q_i) = p^{(n)} \tau^{n+i}
(x)$ for each $n\in\NN$ and $i\in \{0,1,2,3\}$.
\end{Proposition}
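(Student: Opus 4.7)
The plan is to prove this by induction on $n \in \mathbb{N}$, driven by a single recursive identity for the words $f^{(n+1)}$ combined with the recursion formula (RF) for $p^{(n)}$. The overall structure is straightforward; the only delicate point is bookkeeping the indices modulo the period of $\tau$ on $\{x,y,z\}$.

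The first step is to observe the fundamental splitting: when the $2^{n+1}$ binary words of length $n+1$ are listed in lexicographic order, the first $2^n$ start with $0$ and the last $2^n$ start with $1$. Following $0u$ (resp.\ $1u$) from a state $q$ amounts to moving along the $0$-edge (resp.\ $1$-edge) from $q$ and then following $u$. Denoting by $q\cdot b$ the state reached from $q$ via the edge labeled $b$, this yields the recursion
$$f^{(n+1)}(q) \;=\; f^{(n)}(q\cdot 0)\cdot f^{(n)}(q\cdot 1).$$
From the figure, the $0$-edges all point to $q_0$ (so $q_i\cdot 0=q_0$ for every $i$), while the $1$-edges satisfy $q_0\cdot 1 = q_1$, $q_1\cdot 1 = q_2$, $q_2\cdot 1 = q_3$, and $q_3\cdot 1 = q_1$; equivalently, the label of $q_i\cdot 1$ is $\tau^i(x)$, using $\tau^0(x)=x$, $\tau(x)=y$, $\tau^2(x)=z$, and $\tau^3(x)=x$. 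Writing $\sigma(0)=1$, $\sigma(1)=2$, $\sigma(2)=3$, $\sigma(3)=1$, this says $q_i\cdot 1 = q_{\sigma(i)}$.

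The base case $n=1$ is a direct check: $f^{(1)}(q_j)$ is the pair of labels of $q_0$ and $q_{\sigma(j)}$, so $f^{(1)}(q_j) = a\,\tau^{j}(x)$, and using the recursion one computes $f^{(2)}(q_i) = f^{(1)}(q_0)\cdot f^{(1)}(q_{\sigma(i)}) = ax\cdot a\,\tau^{\sigma(i)}(x) = p^{(1)}\,\tau^{\sigma(i)}(x)$. Since $\sigma(i)\equiv i+1\pmod 3$ for every $i\in\{0,1,2,3\}$ and $\tau^k(x)$ depends only on $k\bmod 3$, this equals $p^{(1)}\tau^{1+i}(x)$, as required.

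For the induction step, assume $f^{(n)}(q_j) = p^{(n-1)}\tau^{n-1+j}(x)$ for every $j\in\{0,1,2,3\}$. Applying the recursion and the hypothesis to both factors gives
$$f^{(n+1)}(q_i) \;=\; p^{(n-1)}\tau^{n-1}(x)\cdot p^{(n-1)}\tau^{n-1+\sigma(i)}(x).$$
The recursion formula (RF) on page~\pageref{recursion} reads $p^{(n)} = p^{(n-1)}\tau^{n-1}(x)\,p^{(n-1)}$, so the first three factors collapse to $p^{(n)}$, yielding $f^{(n+1)}(q_i) = p^{(n)}\tau^{n-1+\sigma(i)}(x)$. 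It remains to verify $\tau^{n-1+\sigma(i)}(x) = \tau^{n+i}(x)$, i.e.\ $\sigma(i)\equiv i+1\pmod 3$. This is immediate for $i\in\{0,1,2\}$ and, for $i=3$, follows from $\sigma(3)-1=0\equiv 3\pmod 3$. The only conceptual subtlety is precisely this wrap-around at $q_3$: the $1$-cycle $q_1\to q_2\to q_3\to q_1$ in the automaton is matched exactly to the $3$-cycle of $\tau$ on $\{x,y,z\}$, which is what makes the four-state automaton faithfully encode the substitution.
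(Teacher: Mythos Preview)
Your proof is correct and follows essentially the same approach as the paper: both proceed by induction, using the lexicographic splitting of binary words by first letter to obtain the recursion $f^{(n+1)}(q_i) = f^{(n)}(q_0)\,f^{(n)}(q_{\sigma(i)})$ (the paper writes this as $f^{(n+2)}(q_i)=f^{(n+1)}(q_0)f^{(n+1)}(q_{i+1})$ with $q_4=q_1$), and then apply the induction hypothesis together with $(RF)$. Your treatment is somewhat more explicit about the $\sigma(i)\equiv i+1\pmod 3$ bookkeeping, but the argument is the same.
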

\begin{proof} This is proven by induction (see \cite{GLN-survey} for further details as well).  The case  $n=1$ follows by inspection.
Assume now that the statement is true for some $n\geq 1$ and
consider $n+1$. The lexicographic ordering of the words of length
$n+2$ over $\{0,1\}$ is given by $ 0 v_1,\ldots 0 v_{2^{n+1}}$, $1
v_1,\ldots 1, v_{2^{n+1}}$,  where $v_1,\ldots, v_{2^{n+1}}$ is the
lexicographic ordering of the words of length $n+1$ over $\{0,1\}$.
From the rules of the automaton we  then obtain   $f^{(n+2)} (q_i) =
f^{(n+1)} (q_0) f^{(n+1)} (q_{i+ 1})$ for each $i = 0,1,2,3$,  where
we set $q_{4} = q_1$. The assumption for $n$ and the recursion now
imply the desired statement.
\end{proof}

\subsection{Replacing  $\tau$ by a primitive substitution}
\label{section-recoding}  The substitution $\tau$ arises naturally
in the study of Grigorchuk groups $G$ and its Schreier graphs (see
below). From the point of view of subshifts it has the  disadvantage
of not being primitive. It turns out that it is possible to find a
primitive substitution $\xi$ with the same fixed point - and hence
the same subshift - as $\tau$. The material presented here was
pointed out to us by Fabien Durand \cite{Dur2}.

Consider the substitution $\zeta$ on the alphabet $\{a,x,y,z\}$ with
$$\zeta (a) = ax, \zeta(y) = a y,\zeta (y) = a z, \zeta (z) = ax (= \zeta
(a)).$$ It is not hard to see that this is a primitive substitution.
\begin{Proposition} For any natural number $n$  the equality
$\zeta^n (a) = \tau^{n-1} (a) \tau^{n-1} (x)$ holds. In particular,
the fixed point $\eta$ of $\tau$ agrees with the fixed point of
$\zeta$ and the subshift $(\varOmega_\zeta,T)$ generated by  $\zeta$
agrees with $(\varOmega_\tau,T)$.
\end{Proposition}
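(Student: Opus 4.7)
My plan is to prove the displayed identity $\zeta^n(a)=\tau^{n-1}(a)\tau^{n-1}(x)$ first, and then read off the statements about the fixed point and the subshift from it. Both $\tau$ and $\zeta$ are monoid morphisms on $\mathcal{A}^\ast$, so any identity verified on the four letters extends to all words by multiplicativity. This suggests short-circuiting the induction by first establishing the single morphism identity
$$\zeta^2 \;=\; \tau\circ\zeta,$$
from which the statement for all $n$ will follow immediately.

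To prove $\zeta^2=\tau\circ\zeta$ I compute both sides on each letter of $\mathcal{A}=\{a,x,y,z\}$; for instance $\zeta^2(a)=\zeta(ax)=ax\cdot ay=axay$ and $\tau(\zeta(a))=\tau(ax)=axa\cdot y=axay$, and the three remaining checks are equally short. Granted this, a one-line induction gives $\zeta^{n+1}=\tau^n\circ\zeta$ for all $n\geq 1$: if $\zeta^n=\tau^{n-1}\circ\zeta$ then
$$\zeta^{n+1}=\zeta^n\circ\zeta=\tau^{n-1}\circ\zeta^2=\tau^{n-1}\circ\tau\circ\zeta=\tau^n\circ\zeta.$$
Evaluating at $a$ yields $\zeta^n(a)=\tau^{n-1}(\zeta(a))=\tau^{n-1}(ax)=\tau^{n-1}(a)\tau^{n-1}(x)$, which is the desired formula.

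For the equality of fixed points I would exploit this formula in both directions: the word $\tau^{n-1}(a)$ is trivially a prefix of $\zeta^n(a)=\tau^{n-1}(a)\tau^{n-1}(x)$, and by the recursion formula (RF) the word $\zeta^n(a)=\tau^{n-1}(a)\tau^{n-1}(x)$ is itself a prefix of $\tau^n(a)=\tau^{n-1}(a)\tau^{n-1}(x)\tau^{n-1}(a)$. The two sequences $\bigl(\tau^n(a)\bigr)_n$ and $\bigl(\zeta^n(a)\bigr)_n$ of nested prefixes therefore interleave, so they have a common one-sided limit, and this limit must simultaneously be the fixed point of $\tau$ and the fixed point of $\zeta$. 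Equality of the subshifts then follows from minimality: $(\varOmega_\tau,T)$ is minimal by Theorem~\ref{Theorem-tau-linear-repetitive} and $(\varOmega_\zeta,T)$ is minimal because $\zeta$ is primitive, and both contain the common point $\eta$, so both coincide with the orbit closure of $\eta$.

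I do not foresee a real obstacle. The only point demanding any genuine insight is the morphism identity $\zeta^2=\tau\circ\zeta$ itself, which is where the coincidence is really hiding; once it is noticed, a more cumbersome simultaneous induction on the four parallel statements for $\zeta^n(a),\zeta^n(x),\zeta^n(y),\zeta^n(z)$ is avoided and everything else reduces to prefix bookkeeping and standard minimality arguments.
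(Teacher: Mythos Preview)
Your proposal is correct and in spirit matches the paper's proof, which is nothing more than the single sentence ``This follows rather directly by induction.'' Your morphism identity $\zeta^2=\tau\circ\zeta$ is a clean way to package the induction that avoids tracking four parallel statements for $\zeta^n(a),\zeta^n(x),\zeta^n(y),\zeta^n(z)$; the interleaving-of-prefixes argument for the fixed point and the deduction of $\varOmega_\tau=\varOmega_\zeta$ from the common language $\mbox{Sub}(\eta)$ (equivalently, from minimality) are exactly what one wants. One tiny cosmetic point: since $\eta$ is one-sided while $\varOmega_\tau\subset\mathcal{A}^\ZZ$, the phrase ``both contain $\eta$'' should be read as ``both have language $\mbox{Sub}(\eta)$,'' which is what your argument actually establishes.
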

\begin{proof} This follows rather directly by  induction. A discussion including further
details can also  be found in \cite{GLN-survey}.
\end{proof}

Subshifts associated to primitive substitutions are linearly
repetitive \cite{Dur,DZ}. Thus,  one can base an  alternative proof
of Theorem \ref{Theorem-tau-linear-repetitive} on the preceding
result. Also, as $\zeta$ has constant length (i.e. the length of
$\zeta (t)$ is the same for any $t$) and $\zeta(t)$ starts
with $a$ for any  $t$, a result of Dekking \cite{Dek} implies
purely discrete spectrum as well as that the  so-called maximal
equicontinuous factor is the binary odometer, see \cite{GLN-survey}
for a more detailed exposition.

\section{Connecting the dynamical system $(X,G)$     with
the subshift $(\varOmega_\tau,T)$}\label{Connecting} In this section
we will link the Schreier graphs of {Grigorchuk's} group $G$ and the
subshift $\varOmega_\tau$ in a precise way.
Throughout this section we will use the alphabet $\mathcal{A} =
\{a,x,y,z\}$ and the alphabet $\mathcal{B} = \{a,b,c,d\}$. Moreover,
we will denote the metric  space of isomorphism classes of  rooted
connected  graphs with labels in $\mathcal{B}$ by $\mathcal G_*
(\mathcal{B})$ (see Section \ref{Schreier-finite} as well).

We will approximate infinite graphs by finite graphs and describe
this approximation via finite words approximating infinite words. To
phrase this conveniently, it  will be useful for us to equip the set
of all words (finite and infinite) over $\mathcal{A}$ with a
topology. To do  so we will extend elements of $\mathcal{A}^\ast$
and $\mathcal{A}^\NN$ to functions on $\ZZ$ which take an additional
value $\star$ at those places where they are not originally defined.
Specifically, choose an element $\star$ which does not belong to
$\mathcal{A}$ and consider the new alphabet
$\mathcal{A}\cup\{\star\}$ and equip it with the discrete topology.
Then, $(\mathcal{A}\cup\{\star\})^\ZZ$ is a compact set in the
product topology.  For a function  $ \omega: \ZZ \longrightarrow
\mathcal{A}\cup\{\star\}$ we define its \textit{support},
$\mbox{supp}(\omega)$, via
$$\mbox{supp}(\omega):=\omega^{-1}(\mathcal{A}).$$
We then call the  elements of
$$\word(\mathcal{A}) :=\{\omega \in (\mathcal{A}\cup\{\star\})^\ZZ :
\mbox{supp}(\omega) \mbox{ is an interval}\}$$ the   \textit{words
associated to $\mathcal{A}$}.  Here, a subset $I$ of $\ZZ$ is called
an  \textit{interval} if with $a,b\in I$ also all $c\in \ZZ$ with
$a\leq c \leq b$ belong to $I$.  Clearly $\word(\mathcal{A})$ is a
closed   subset of $(\mathcal{A}\cup\{\star\})^{\ZZ}$ which is
invariant under the shift $T$. Hence, $\word(\mathcal{A})$ is
compact and $(\word(\mathcal{A}),T)$ is a subshift (over the
alphabet $\mathcal{A}\cup\{\star\}$).  The elements of
$\mathcal{A}^\ast$ and $\mathcal{A}^\NN$ can be canonically
 identified with elements of $\word(\mathcal{A})$ by extension by
$\star$. More specifically, we will identify $w \in
\mathcal{A}^\ast$ with the function $\omega_w : \ZZ \longrightarrow
(\mathcal{A}\cup\{\star\})$ defined by $\omega (w) (n) = w_n$ for
$n\in \{1,\ldots, |w|\}$ and $\omega (w) (n) = \star$ otherwise.
Similarly, we will identify  $ \xi \in \mathcal{A}^\NN$ with the
function $\omega_\xi : \ZZ \longrightarrow
(\mathcal{A}\cup\{\star\})$ defined by $\omega_\xi (n) = \xi(n)$ for
$n\in \NN$ and $\omega_\xi (n) = \star$ otherwise. These
identifications will be tacitly assumed in the sequel.


\subsection{The substitution $\Theta$}
In this section we present the graph version of $\tau$.

In Section \ref{Section-Schreier} we have seen that the action of
$G$ on the $n$-th level of the
 binary tree
gives rise to the  $n$-th level Schreier graphs $\varGamma_n$, $n\in
\NN$ (see Figure \ref{Schreier-finite} as well).  As discussed in
\cite{BG},  the substitutional rules given in Figure
\ref{picture-substitution} describe how to construct recursively the
graph $\varGamma_{n+1}$ from $\varGamma_n$, starting from the
Schreier graph of the first level $\varGamma_1$. Specifically, the
construction consists in replacing the labeled subgraphs of
$\Gamma_{n}$ on the top of Figure \ref{picture-substitution}  by the
new labeled graphs given on the bottom of Figure
\ref{picture-substitution}:

\begin{figure}[h!]
\begin{center}
\begin{picture}(400,110)
\letvertex A=(65,100)\letvertex B=(105,100)\letvertex C=(145,100)
\letvertex D=(185,100)\letvertex E=(225,100)\letvertex F=(265,100)\letvertex G=(305,100)
\letvertex H=(345,100)\letvertex I=(45,20)\letvertex L=(65,20)\letvertex M=(105,20)\letvertex N=(125,20)
\letvertex c=(145,20)
\letvertex d=(185,20)\letvertex e=(225,20)\letvertex f=(265,20)\letvertex g=(305,20)
\letvertex h=(345,20)

\put(82,60){$\Downarrow$}\put(162,60){$\Downarrow$}\put(242,60){$\Downarrow$}\put(322,60){$\Downarrow$}

\put(62,92){$u$} \put(102,92){$v$}\put(142,92){$u$}
\put(182,92){$v$}\put(222,92){$u$} \put(262,92){$v$}
\put(302,92){$u$}\put(342,92){$v$}

\put(40,10){$1u$} \put(62,10){$0u$}\put(102,10){$0v$}
\put(122,10){$1v$}

\put(141,10){$1u$} \put(181,10){$1v$}\put(221,10){$1u$}
\put(261,10){$1v$} \put(301,10){$1u$}\put(341,10){$1v$}

\drawvertex(A){$\bullet$}\drawvertex(B){$\bullet$}
\drawvertex(C){$\bullet$}\drawvertex(D){$\bullet$}
\drawvertex(E){$\bullet$}\drawvertex(F){$\bullet$}
\drawvertex(G){$\bullet$}\drawvertex(H){$\bullet$}
\drawvertex(I){$\bullet$}\drawvertex(L){$\bullet$}
\drawvertex(M){$\bullet$}\drawvertex(N){$\bullet$}
\drawvertex(c){$\bullet$}\drawvertex(d){$\bullet$}
\drawvertex(e){$\bullet$}\drawvertex(f){$\bullet$}
\drawvertex(g){$\bullet$}\drawvertex(h){$\bullet$}

\drawundirectedloop(L){$d$}\drawundirectedloop(M){$d$}

\drawundirectededge(A,B){$a$}\drawundirectededge(C,D){$b$}
\drawundirectededge(E,F){$c$} \drawundirectededge(G,H){$d$}
\drawundirectededge(c,d){$d$}\drawundirectededge(e,f){$b$}
\drawundirectededge(g,h){$c$}
\drawundirectededge[r](L,I){$a$}\drawundirectededge(M,N){$a$}
\drawundirectedcurvededge(L,M){$b$}\drawundirectedcurvededge[b](L,M){$c$}

\end{picture}
\end{center}
\caption{The substitution $\varTheta$ \label{picture-substitution}.}
\end{figure}
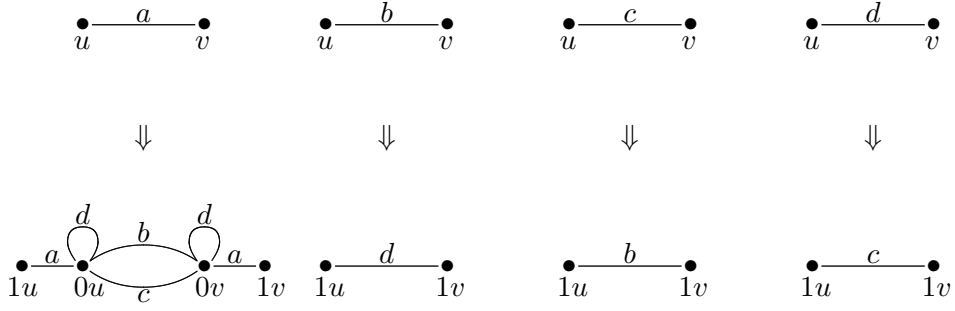



The  substitution rules and an easy induction directly give
 that for every natural number $n$  the graph
$\varGamma_n$ has 'linear structure' with rightmost vertex  given by
$1^n$ and this vertex 'becomes' the rightmost vertex $1^{n+1}$ under
the substitution. The leftmost vertex is given by $1^{n-1} 0$.

These rules, allowing to proceed from $\varGamma_n$ to
$\varGamma_{n+1}$, suggest to study the substitution $\Theta$ acting
in the following way on the set $\mathcal G_* (\mathcal{B})$ (see
Figure \ref{picture-substitution} disregarding the notation under
the vertices):

\begin{itemize}

\item it keeps the root;

\item it  replaces the edges labeled by $b$ with edges labeled by
$d$, edges labeled by $c$ with edges labeled by $b$ and edges
labeled by $d$ with edges labeled by $c$;

\item it  inserts between  two vertices $v$ and $w$ connected
by an edge of label $a$ two additional vertices $v_1, v_2$ as well
as the following edges:   edges with label $a$ from  $v$ to $v_1$
and from $w$ to $v_2$,   edges with label $b$ and $c$ respectively
between $v_1$ and $v_2$,  edges with label $d$ from $v_1$ to itself
and from $v_2$ to itself.
\end{itemize}

By the very definition of this substitution and the preceding
discussion  we have the following result for the finite Schreier
graphs.


\begin{Proposition}\label{Proposition-Gamma-n}
For every $n\in\NN$, we have $\Theta((\varGamma_{1^n},1^n)) =
(\varGamma_{1^{n+1}},1^{n+1})$.
\end{Proposition}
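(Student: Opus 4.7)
The plan is to give a direct verification rather than an induction, exploiting the self-similar/recursive definition of the generators $a,b,c,d$ given in Section \ref{Group-G}. The key observation is that the prescribed substitution rules in Figure \ref{picture-substitution} are nothing but a graph-theoretic picture of the recursive formulas defining $a,b,c,d$; once this is unpacked, matching $\Theta(\varGamma_n)$ to $\varGamma_{n+1}$ is bookkeeping.

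First, I would set up the correspondence between vertex sets. Since $V(\varGamma_{n+1}) = \{0,1\}^{n+1}$ partitions as the disjoint union $0\cdot \{0,1\}^n \sqcup 1\cdot \{0,1\}^n$, prepending gives two natural injections $\iota_0,\iota_1 : V(\varGamma_n)\hookrightarrow V(\varGamma_{n+1})$, matching the labels $0u,1u$ used in the bottom row of Figure \ref{picture-substitution}. The root of $\varGamma_n$ is $1^n$, and $\iota_1(1^n)=1^{n+1}$ is the root of $\varGamma_{n+1}$, so the root is preserved as required.

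Next I would run through the four generators, using the recursive formulas $a(0w)=1w$, $b(0w)=0\cdot a(w)$, $b(1w)=1\cdot c(w)$, and the analogous formulas for $c$ and $d$, to read off the edges of $\varGamma_{n+1}$ in terms of the edges of $\varGamma_n$. Concretely: $a$-edges in $\varGamma_{n+1}$ are exactly the pairs $(0w,1w)$; on the $0$-side, $b$- and $c$-edges both follow $a$-edges of $\varGamma_n$ (since $b$ and $c$ act as $a$ on words starting with $0$) and $d$ acts trivially, producing $d$-loops; on the $1$-side, a $b$-edge of $\varGamma_{n+1}$ follows a $c$-edge of $\varGamma_n$, a $c$-edge follows a $d$-edge, and a $d$-edge follows a $b$-edge (the cyclic permutation $b\leftrightarrow c\leftrightarrow d$ dictated by the formulas for $b(1w),c(1w),d(1w)$).

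Finally, I would check that this local data matches $\Theta$ edge by edge. Each $a$-edge $(u,v)$ in $\varGamma_n$ (well-defined since $a$ is a fixed-point-free involution on each level) contributes, in $\varGamma_{n+1}$, the four vertices $0u,1u,0v,1v$ together with $a$-edges $(0u,1u)$, $(0v,1v)$, a $b$- and $c$-edge joining $0u$ and $0v$, and $d$-loops at $0u,0v$; this is exactly the substitution pattern for an $a$-edge. A $b$-edge $(u,v)$ of $\varGamma_n$ contributes the single $d$-edge $(1u,1v)$; $c$-edges become $b$-edges on the $1$-side; $d$-edges become $c$-edges on the $1$-side. Each of these triples matches one line of Figure \ref{picture-substitution}. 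Since every edge of $\varGamma_{n+1}$ is accounted for exactly once and every vertex arises either as $\iota_0(u)$ or $\iota_1(u)$, we obtain $\Theta(\varGamma_n,1^n) = (\varGamma_{n+1},1^{n+1})$. The only real care needed is to see that the $d$-loops at $0$-vertices and the $a$-edges crossing between the sides are produced by $a$-edges of $\varGamma_n$ and not double-counted; this is where the fact that every level-$n$ vertex lies on a unique $a$-edge is used.
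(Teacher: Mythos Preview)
Your proof is correct and follows essentially the same approach as the paper: the paper simply asserts the proposition as an immediate consequence of the definition of $\Theta$ together with the substitutional rules from \cite{BG} (Figure~\ref{picture-substitution}), while you spell out the verification explicitly by unpacking the self-similar recursions for $a,b,c,d$ and matching them to the four edge-replacement rules. The only cosmetic point is that your phrase ``cyclic permutation $b\leftrightarrow c\leftrightarrow d$'' is slightly ambiguous notation for the $3$-cycle $b\mapsto d$, $c\mapsto b$, $d\mapsto c$ that the substitution actually performs; otherwise everything checks out.
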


It is not hard to see that $\Theta$ is compatible with graph
isomorphisms. Thus, $\Theta$ induces a map on the set $\mathcal G_*
(\mathcal{B})$.  We will denote this map also by $\Theta$.

\subsection{The mapping $\Graph$ from words  to  graphs}
Here we will be interested in a special subset  of
$\word(\mathcal{A})$ introduced at the beginning of this section.
This subset $\word^{'} (\mathcal{A})$ consists of all $\omega \in
\word(\mathcal{A})$ satisfying the following two properties:

\begin{itemize}

\item $1$ belongs to the support of $\omega$ (i.e. $\omega (1) \in
\mathcal{A}$).

\item Whenever $\omega_n \omega_{n+1}$ is not equal to $\star \; \star $ then,
exactly one of the two letters $\omega_n$ and $\omega_{n+1}$ is
equal to $a$.
\end{itemize}

Note that the first condition is automatically satisfied for all
elements of $\word(\mathcal{A})$ coming from $\mathcal{A}^\ast$,
$\mathcal{A}^\NN$ and $\mathcal{A}^\ZZ$. The second condition means
that every other letter  is an $a$. Moreover, it ensures that the words in question
start and finish with the letter  $a$.

We will now construct a map from $\word^{'} (\mathcal{A})$ to the
topological  space  $\mathcal G_* (\mathcal{B})$. To picture this
map, it will be useful to think of the letters $x,y,z$ as encoding
the pairs
$$\left(\begin{matrix} b \\ c \end{matrix}\right),  \ \  \left(\begin{matrix} b \\ d
\end{matrix}\right),  \  \left(\begin{matrix} c \\ d \end{matrix}\right), $$
respectively. Roughly speaking the map will replace a letter (at
position $n$)  in the word by  graphs with two vertices ($n$ and
$n+1$) connected by labeled edges according to the specific letter.
In particular, in the case of finite words, the number of vertices
of the graphs will exceed the number of letters of the word by one.
Here are the details.

To  $\omega \in\word^{'} (\mathcal{A})$ we associate
a labeled rooted  graph $\graph(\omega)$ in the following way.

\textit{Vertices.} The
set of vertices is a subset of $\ZZ$  given by the support
$\mbox{support}(\omega)$ of $\omega$ together with $m+1\in \ZZ$ if $\mbox{support}(\omega)$
possesses a maximal element $m$.

\textit{Root.} By the definition of $\word^{'}(\mathcal{A})$
the number $1$ is always a vertex and this vertex is chosen as the
root.

\textit{Edges.} There are edges
between vertices $n,k$ if an only if $|n-k|\leq 1$. Specifically, edges
are assigned between $n$ and $n+1$ and
from $n$ to itself and from $n+1$ to  itself  in the following way:
\begin{itemize}
\item If $\omega (n) = a$, then there is an edge between $n$ and
$n+1$ labeled by $a$.

\item If $\omega (n) = x$, then there are two edges between $n$ and
$n+1$; one is labeled by  $b$ and the other is labeled by
$c$. Moreover, there is an additional edge from $n$ to itself
labeled by $d$ and an additional edge from $n+1$ to itself labeled by  $d$.

\item If $\omega (n) =y$, then there are two edges between $n$ and
$n+1$; one is labeled by $b$ and the other is labeled by
$d$. Moreover, there is an additional edge from $n$ to itself
labeled with $c$ and an additional edge from $n+1$ to itself labeled with  $c$.

\item If $\omega (n) =z$, then there are two edges between $n$ and
$n+1$; one is labeled by $c$ and the other is labeled  by
$d$. Moreover, there is an additional edge from $n$ to itself
labeled with $b$ and an additional edge from $n+1$ to itself labeled with  $b$.

\item If $n$ is the minimal
element of the support of $\omega$ then there are additional three edges labeled with
$b,c,d$ from $n$ to itself. If $n$ is the maximal element of the support of $\omega$
then there are additional three edges labeled with $b,c,d$ from $n$
to itself.
\end{itemize}

The map $\graph$ gives rise to a map $\Graph$ from words to $\mathcal
G_* (\mathcal{B}) $ by taking isomorphism classes via
$$\Graph : \word^{'}(\mathcal{A}) \longrightarrow \mathcal G_* (\mathcal{B})  , \omega \mapsto [\graph (\omega)],$$
where $[ \cdot]$ denotes the isomorphism class.

\begin{Proposition}\label{Graph-continuous} The map $\Graph$ is continuous.
\end{Proposition}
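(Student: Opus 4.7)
The target space $\mathcal{G}_*(\mathcal{B})$ carries the topology of local convergence, in which rooted labeled graphs are close when their rooted balls of any fixed radius around the root coincide up to label-preserving rooted isomorphism. The source $\word^{\prime}(\mathcal{A})$ inherits the product topology from $(\mathcal{A} \cup \{\star\})^{\ZZ}$, so that a basic open neighborhood of $\omega$ consists of those words that coincide with $\omega$ on a prescribed finite window of positions. The plan is thus to fix $\omega \in \word^{\prime}(\mathcal{A})$ and a radius $R \in \NN$, and to exhibit a finite window $W \subset \ZZ$ containing the root position $1$ such that whenever $\omega^{\prime} \in \word^{\prime}(\mathcal{A})$ agrees with $\omega$ on $W$, the rooted balls of radius $R$ in $\graph(\omega)$ and $\graph(\omega^{\prime})$ coincide as rooted labeled graphs, forcing $\Graph(\omega^{\prime})$ to be arbitrarily close to $\Graph(\omega)$.

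The key observation is that the recipe for $\graph(\omega)$ is strictly local. Whether an integer $n$ is a vertex is determined by $\omega(n)$ together with $\omega(n-1)$ (for the extra vertex $m+1$ rule at the right end of the support); the edges between $n$ and $n+1$ as well as the loops at $n$ and $n+1$ are determined by $\omega(n)$ together with the information of whether $n$ is a boundary point of the support, which itself is read off from $\omega(n-1)$ and $\omega(n+1)$. Moreover, since every non-loop edge of $\graph(\omega)$ connects positions differing by exactly one, the combinatorial distance from the root $1$ to any vertex $n$ equals $|n-1|$; in particular the rooted ball of radius $R$ sits on the positions in $[1-R,\,1+R]$.

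With this in hand, take $W := [1 - R - 1,\, 1 + R + 1]$; the one extra position on either side supplies precisely the information needed to decide whether the endpoints of the support (if any) lie inside the $R$-ball, and hence whether the corresponding boundary loops should be placed. If $\omega^{\prime}$ coincides with $\omega$ on $W$, the local recipe then produces literally the same labeled graph on $[1-R,\,1+R]$ rooted at $1$, so the rooted $R$-balls agree. The main obstacle is purely bookkeeping of the boundary-loop rule and of the added vertex $m+1$: one must check, in particular, that when the maximum of the support falls exactly at $1+R-1$, the adjoined vertex $m+1 = 1+R$ and its loops are placed identically in both graphs. These checks are all covered by the two-position padding, and continuity of $\Graph$ follows.
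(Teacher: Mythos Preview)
Your proof is correct and follows exactly the same approach as the paper's, which simply observes that $\graph$ depends only on local data, so agreement on a large interval around $1$ forces agreement on a large ball around the root. You have spelled out carefully what the paper leaves implicit --- the linear structure of $\graph(\omega)$ giving $d(1,n)=|n-1|$, and the one-position padding on each side to handle the boundary loops and the adjoined vertex $m+1$ --- but there is no substantive difference in method.
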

\begin{proof} Obviously, only local information enters the definition of
$\graph$, i.e., two elements of  $\word^{'} (\mathcal{A})$ which
agree on a large interval $J\subset \ZZ$ around $1$ will give rise
to graphs which agree on a large neighborhood  around the root. Now,
agreement on large balls around the root is exactly how the topology
on $\mathcal G_* (\mathcal{B})$ is defined and the continuity
statement easily follows.
\end{proof}

\subsection{The connection: $(X,G)$ as a factor of $(\varOmega_\tau,G)$}
In this section we are going to  connect $\tau$, $\Theta$ and
$\Graph$. More specifically, we will show that $\varOmega_\tau$
admits a natural  action of $G$ by homeomorphisms which is orbit
equivalent to the shift, and that the dynamical system $(X,G)$
defined in Section \ref{Section-Schreier}  is a factor of
$(\varOmega_\tau,G)$, i.e. there exists a continuous surjective map,
the \textit{factor map},   $\psi: \varOmega_\tau\longrightarrow X$
which intertwines the respective actions of $G$. This factor map is
nothing but the restriction of $\Graph$ to $\varOmega_\tau$.

The  following immediate consequence of the definitions shows that
the map $\Graph$ intertwines the actions of the substitutions $\tau$
and $\Theta$.

\begin{Proposition}\label{Intertwining} The substitution $\tau$ maps
$\mathcal{A}^\ast \cap\word^{'}(\mathcal{A})$ into itself and  the equality $\Graph
\circ \tau = \Theta \circ \Graph$ holds on
$\mathcal{A}^\ast \cap\word^{'}(\mathcal{A})$.
\end{Proposition}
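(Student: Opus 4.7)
The plan is to handle the two assertions separately. The first is a combinatorial check on the alternation pattern defining $\word^{'}(\mathcal{A})$, and the second is an explicit comparison of two graph constructions that happens to be \emph{local} in the word.

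For the first assertion, I would parse any $w \in \mathcal{A}^\ast \cap \word^{'}(\mathcal{A})$ as $w = a s_1 a s_2 a \cdots s_{k-1} a$ with $s_i \in \{x,y,z\}$ (the case $w = a$ being trivial), using that finite words in $\word^{'}(\mathcal{A})$ must start and end with $a$ and alternate the letter $a$ with a non-$a$ letter. Applying $\tau$ letter by letter, and using $\tau(a) = axa$ together with $\tau(\{x,y,z\}) \subseteq \{x,y,z\}$, yields
$$\tau(w) \;=\; (axa)\,\tau(s_1)\,(axa)\,\tau(s_2)\,\cdots\,\tau(s_{k-1})\,(axa),$$
which continues to alternate the letter $a$ with elements of $\{x,y,z\}$ and starts and ends with $a$. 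Hence $\tau(w) \in \word^{'}(\mathcal{A})$. This step is essentially automatic.

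For the identity $\Graph \circ \tau = \Theta \circ \Graph$, I would exhibit an explicit rooted-labeled-graph isomorphism between $\graph(\tau(w))$ and $\Theta(\graph(w))$. Both are linear graphs rooted at the vertex of index $1$, and both constructions are local: they operate independently on each letter-block of the word, with gluings at shared vertices. The verification therefore reduces to a case check on a single letter, together with a compatibility check at the gluings. On the letter $a$, the lone $a$-edge of $\graph(w)$ is turned by $\Theta$ into a pair of inserted vertices joined to the old endpoints by $a$-edges, joined to each other by a $b$- and a $c$-edge, and carrying $d$-loops --- this is exactly the local structure of $\graph(axa) = \graph(\tau(a))$. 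On a letter $s \in \{x,y,z\}$, the corresponding double-edge-plus-loops block is merely relabeled by $\Theta$ via $b \mapsto d,\; c \mapsto b,\; d \mapsto c$, which sends the $x$-block to the $y$-block, the $y$-block to the $z$-block and the $z$-block to the $x$-block --- exactly matching the action of $\tau$ on $\{x,y,z\}$.

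It then remains to check that the gluings and boundaries are consistent. Both $\graph(\tau(w))$ and $\Theta(\graph(w))$ retain the original vertices of $\graph(w)$ in the same relative positions (new vertices are inserted only inside the expanded $a$-edges), so consecutive letter-blocks glue along matching shared vertices. Moreover, the three boundary loops $\{b,c,d\}$ at the two extreme vertices of $\graph(w)$ are permuted as a set by $\Theta$'s relabeling, reproducing the boundary loops at the extreme vertices of $\graph(\tau(w))$. The main obstacle is notational rather than conceptual: one needs a clean bookkeeping of the vertex bijection in order to make the case check readable. A short induction on $|w|$, adding one letter-block at a time and extending the partial isomorphism, provides the cleanest framework; alternatively, since $|w| = 2k-1$ forces $|\tau(w)| = 4k-1$, one can write the vertex bijection in closed form and verify edge labels by inspection.
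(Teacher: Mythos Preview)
Your proposal is correct and follows exactly the route the paper intends: the paper states the proposition as ``an immediate consequence of the definitions'' and gives no further argument, and your letter-by-letter verification (checking that $\tau(a)=axa$ reproduces $\Theta$'s expansion of an $a$-edge, that the relabeling $b\mapsto d,\,c\mapsto b,\,d\mapsto c$ matches $\tau$ on $\{x,y,z\}$, and that boundary loops and the root behave compatibly) is precisely the unpacking of that claim.
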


We can now state and prove the main lemma
connecting  the substitution $\tau$ and the (finite) Schreier graphs
of Grigorchuk's group $G$. Recall that  $\mathcal F$ maps a finite
or infinite word $w$ to the isomorphism class of the rooted graph
$(\varGamma_w,w)$ (compare Section \ref{Section-Schreier}).

\begin{Lemma}[Connecting $\varGamma$ and $\tau$]
\label{finite-connection}  For all $n\in \NN$ we have
$$\Graph
(\tau^{n-1} (a) ) = [(\varGamma_{1^n}, 1^n)] = \mathcal F ( 1^n)
\;\:\mbox{and}\;\: \Graph (\eta) =[( \varGamma_{1^\infty},1^\infty)]
= \mathcal{F} (1^\infty) .$$
\end{Lemma}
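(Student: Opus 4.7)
The plan is to prove the finite statement by induction on $n$ and then obtain the infinite statement by a continuity/limit argument, combining Propositions \ref{Proposition-Gamma-n}, \ref{Graph-continuous} and \ref{Intertwining} with Lemma \ref{F-continuous}.

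For the base case $n=1$, I would unwind the definition of $\graph$ applied to the single-letter word $a$ (viewed as an element of $\word'(\mathcal A)$ supported on $\{1\}$, so that $2$ is added as the "$m+1$" vertex). This produces two vertices joined by an $a$-edge, with loops labeled $b,c,d$ at each vertex (because both $1$ and $2$ are at the boundary of the support). A direct comparison with $\varGamma_1$ (the Schreier graph of the first level, two vertices with one $a$-edge and $b$-, $c$-, $d$-loops at each) gives $\Graph(a)=[(\varGamma_{1},1)] = \mathcal F(1)$, where the last equality is just the definition of $\mathcal F$.

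For the inductive step, assume $\Graph(\tau^{n-1}(a)) = [(\varGamma_{1^n},1^n)]$. First I would check that $\tau^{n-1}(a) = p^{(n-1)}$ lies in $\mathcal{A}^\ast \cap \word'(\mathcal A)$, so that Proposition \ref{Intertwining} applies. This is immediate because every other letter of $p^{(n-1)}$ is an $a$ and the word starts and ends with $a$ (which is clear from the recursion formula $(RF)$ on page~\pageref{recursion}). Then
\[
\Graph(\tau^n(a))=\Graph(\tau(\tau^{n-1}(a)))=\Theta(\Graph(\tau^{n-1}(a)))=\Theta([(\varGamma_{1^n},1^n)])=[(\varGamma_{1^{n+1}},1^{n+1})],
\]
where the first equality uses $\tau^n = \tau\circ\tau^{n-1}$, the second is Proposition \ref{Intertwining}, the third is the induction hypothesis, and the fourth is Proposition \ref{Proposition-Gamma-n}. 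The identification with $\mathcal F(1^{n+1})$ is again just the definition of $\mathcal F$.

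For the infinite statement, I would use that the finite words $p^{(n-1)} = \tau^{n-1}(a)$ (viewed as elements of $\word(\mathcal A)$ via extension by $\star$) converge in $\word(\mathcal A)$ to $\eta$ (extended by $\star$ on $\ZZ\setminus \NN$) as $n\to\infty$. Indeed, $p^{(n-1)}$ is a prefix of $\eta$ of length $|p^{(n-1)}|\to\infty$, so the two words agree on the interval $\{1,\dots,|p^{(n-1)}|\}$ and both take the value $\star$ on all non-positive integers. Note that $\eta$ itself is in $\word'(\mathcal A)$ because every other letter of $\eta$ is an $a$. Applying Proposition \ref{Graph-continuous} gives $\Graph(p^{(n-1)})\to \Graph(\eta)$ in $\mathcal G_\ast(\mathcal B)$. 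On the other hand, by Lemma \ref{F-continuous}, $\mathcal F(1^n)\to \mathcal F(1^\infty)=[(\varGamma_{1^\infty},1^\infty)]$. Comparing the two limits via the already established finite case yields $\Graph(\eta) = [(\varGamma_{1^\infty},1^\infty)] = \mathcal F(1^\infty)$.

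The main obstacle I expect is the bookkeeping for the base case: one has to match the local data defining $\graph(a)$ (vertices $1,2$, $a$-edge, and the boundary rule producing loops labeled $b,c,d$ at each endpoint) with the Schreier graph $\varGamma_1$ so that the inductive machinery can start. Everything beyond that is a formal application of the intertwining relation $\Graph\circ\tau=\Theta\circ\Graph$ together with the continuity of $\Graph$ and $\mathcal F$.
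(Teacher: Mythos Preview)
Your proposal is correct and follows exactly the paper's own proof: induction on $n$ with base case $n=1$ checked by unwinding the definition of $\graph(a)$, the inductive step via the intertwining relation of Proposition \ref{Intertwining} combined with Proposition \ref{Proposition-Gamma-n}, and the infinite statement deduced from the continuity of $\Graph$ (Proposition \ref{Graph-continuous}) and of $\mathcal F$ (Lemma \ref{F-continuous}). Your write-up is in fact slightly more careful than the paper's, since you explicitly verify that $p^{(n-1)}\in\word'(\mathcal A)$ before applying Proposition \ref{Intertwining} and spell out the convergence $p^{(n-1)}\to\eta$ in $\word(\mathcal A)$.
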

\begin{proof}
The first pair of  relations  follows by an easy induction (compare
Figure \ref{picture-substitution}):

\textit{$n=1$:} We have $\tau^{n-1} (a) = a$. This
translates into a graph $\graph(a)$  with two vertices $1$ and $2$
and one edge between them with label $a$ as well as loops on both
vertices with labels $b,c,d$. This graph is clearly isomorphic to
$\varGamma_1$.

\textit{$n\Longrightarrow n+1$:} We can calculate
\begin{eqnarray*}
\Graph(\tau^{n+1} (a)) &=& \Graph (\tau (\tau^n (a) ) )\\
(\mbox{Prop. \ref{Intertwining}}) \;\: &=& \Theta \circ  \Graph
(\tau^n (a) ) \\
(\mbox{statement for $n$})\;\:\; &=& \Theta (\mathcal F ({1^{n+1}} ) )\\
(\mbox{Prop. \ref{Proposition-Gamma-n} }  ) \:\; &=&
\mathcal F ({1^{n+2}}).
\end{eqnarray*}
This shows the first pair of relations. The second pair
of relations can then be deduced from the continuity of
$\Graph$ shown in Lemma \ref{Graph-continuous} and the continuity of
$\mathcal F$ shown in Lemma \ref{F-continuous}.
\end{proof}

Define the maps $A,B,C,D$ from $\varOmega_\tau$ into itself by
\begin{itemize}
\item  $A (\omega) = ...\omega_0 \omega_1 | \omega_2 ...$ if $\omega_1 =a$ and $ A (\omega) = ....\omega_{-1}| \omega_0 \omega_1...$ if $\omega_0 =a$.
\item $ B (\omega) = ...\omega_0 \omega_1 |\omega_2...$ if $\omega_1 \in\{x,y\}$, $B  (\omega) = ....\omega_{-1} |\omega_0 \omega_1...$ if $\omega_0 \in \{x,y\}$ and $B (\omega) = \omega$ in all other cases.
\item $C(\omega)  =... \omega_0 \omega_1 |\omega_2...$ if $\omega_1 \in\{x,z\}$, $C (\omega) = ....\omega_{-1} |\omega_0 \omega_1...$ if $\omega_0 \in \{x,z\}$ and $C(\omega) = \omega$ in all other cases.
\item $D  (\omega) = ...\omega_0 \omega_1 |\omega_2...$ if $\omega_1 \in\{y,z\}$, $D (\omega) = ....\omega_{-1} |\omega_0 \omega_1...$ if $\omega_0 \in \{y,z\}$ and $D (\omega) = \omega$ in all other cases.
\end{itemize}
Clearly, $A,B,C,D$ are homeomorphisms and involutions. Denote by $H$ the group generated by
$A,B,C,D$ within  the group of homeomorphisms of $\varOmega_\tau$.

\begin{Theorem}[Factor theorem]
\label{Main-connection} The following statements hold:

(a) The group $G$ is isomorphic to the group $H$ via  $\varrho :
G\longrightarrow H$ with  $\varrho(a) =A$, $\varrho(b) = B$,
$\varrho(c) =C$ and $\varrho (d) =  D$.   In particular, there is a
well defined action $\alpha$ of $G$ on $\varOmega_\tau$ given by
$\alpha_g (\omega) :=\varrho (g) (\omega)$ for $g\in G$ and $\omega
\in \varOmega_\tau$ and via this action we obtain a dynamical system
$(\varOmega_\tau,G)$.

(b) The dynamical system $(X,G)$ is a factor of the dynamical system
$(\varOmega_\tau,G)$   via the  map
$$\psi : \varOmega_\tau \longrightarrow X, \omega \mapsto \Graph
(\omega),$$ which is two-to-one.

(c) For every $\omega \in \varOmega_\tau$ the orbits $\{T^n \omega :
n\in \ZZ\}$ and $\{\alpha_g (\omega) : g\in G\}$ coincide.

(d) The dynamical system $(\varOmega_\tau,G)$ is uniquely ergodic
and  the unique $T$-invariant probability measure on
$\varOmega_\tau$ coincides with the unique  $G$-invariant
probability measure on $\varOmega_\tau$.
\end{Theorem}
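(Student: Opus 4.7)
The plan is to verify (a), (c), (b), and (d) in that order, since (a) sets up the $G$-action on $\varOmega_\tau$, (c) identifies $T$- and $G$-orbits, and then (b) and (d) follow from tools already developed.

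For (a), the key observation is that by construction each of $A, B, C, D$ is a self-inverse map which at every point of $\varOmega_\tau$ acts as $T$, as $T^{-1}$, or as the identity. First I would show via $\Graph$ that these four involutions implement the graph-theoretic action of $a, b, c, d$. A direct case analysis, using the definitions of $\graph$ and of $A, B, C, D$ on the letters $a, x, y, z$ at the positions $\omega_0, \omega_1$, shows that for every $\omega \in \word^{'}(\mathcal{A})$ and every $s \in \{a, b, c, d\}$ the graph $\graph(\varrho(s)\omega)$ is $\graph(\omega)$ with its root moved along the $s$-edge at the root (or fixed when that $s$-edge is a loop). Since each $\graph(\omega)$ is a Schreier graph of $G$ (as a limit of the finite Schreier graphs of Lemma \ref{finite-connection} by continuity of $\Graph$), every relation in $G$ is satisfied by $A, B, C, D$, and $\varrho$ extends to a well-defined homomorphism $G \to H$; surjectivity is automatic from the definition of $H$. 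For injectivity, if $\varrho(g) = \mathrm{id}_{\varOmega_\tau}$ then the equivariance of $\Graph$ forces $g$ to fix every point of $X = \psi(\varOmega_\tau)$, hence (via the map $\phi$ of Section \ref{Section-Schreier}) every point of $\partial \mathcal T$; faithfulness of the $G$-action on $\partial \mathcal T$ then yields $g = e$.

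Part (c) follows from the casework in (a): for every $\omega \in \varOmega_\tau$ there is at least one generator $s(\omega) \in \{a, b, c, d\}$ with $T(\omega) = \varrho(s(\omega))\omega$ (two generators work and either choice suffices when $\omega_1 \in \{x, y, z\}$), so $T$-orbits are contained in $G$-orbits; the reverse inclusion is immediate since each $\varrho(t)$ acts pointwise as $T^{\pm 1}$ or the identity. For (b), continuity of $\psi := \Graph|_{\varOmega_\tau}$ is Proposition \ref{Graph-continuous} and $G$-equivariance $\psi \circ \varrho(g) = g \cdot \psi$ is the content of (a). Surjectivity onto $X$ then follows from minimality: $\psi(\varOmega_\tau)$ is a compact, nonempty, $G$-invariant subset of $X$, hence equals $X$ by minimality of $(X, G)$. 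For the two-to-one property, suppose $\psi(\omega) = \psi(\omega')$; then there is a label-preserving, root-preserving graph isomorphism between $\graph(\omega)$ and $\graph(\omega')$, both of which have vertex set $\ZZ$ with root $1$ and edges only between consecutive integers. Any self-map of $\ZZ$ fixing $1$ and preserving adjacency is either the identity, yielding $\omega = \omega'$, or the reflection $n \mapsto 2 - n$, yielding $\omega' = \widetilde{\omega}$ in the notation of Section \ref{section-reflection}; since by Proposition \ref{Inversion} the map $\omega \mapsto \widetilde{\omega}$ has no fixed point, every fiber has exactly two elements.

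Finally, for (d) the plan is to show that the classes of $T$-invariant and $G$-invariant Borel probability measures on $\varOmega_\tau$ coincide; unique ergodicity then transfers from $(\varOmega_\tau, T)$, which is uniquely ergodic by Theorem \ref{Theorem-tau-linear-repetitive} and the standard fact that linearly repetitive subshifts are uniquely ergodic. The partition $\varOmega_\tau = \bigsqcup_t E_t$ induced by the choice function $s$ from (c), together with injectivity of $T$, produces a second partition $\varOmega_\tau = T(\bigsqcup_t E_t) = \bigsqcup_t \varrho(t)(E_t)$. For a $G$-invariant $\mu$, applying $\varrho(t)$-invariance inside each piece and resumming gives $\mu(T^{-1}U) = \sum_t \mu(\varrho(t)^{-1}(U) \cap E_t) = \sum_t \mu(U \cap \varrho(t)(E_t)) = \mu(U)$, so $\mu$ is $T$-invariant. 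Conversely, $T$-invariance implies $\varrho(t)$-invariance for each generator, by splitting $\varOmega_\tau$ into the two disjoint cylinders on which $\varrho(t)$ acts as $T$ and as $T^{-1}$ and applying $T$-invariance on each piece. The main obstacle I anticipate is the casework in (a): carefully matching each of $A, B, C, D$ against the encoding rules for $a, x, y, z$ at both positions $\omega_0$ and $\omega_1$, including loop cases and the multi-edge cases for $x, y, z$; this is routine but must be carried out exhaustively.
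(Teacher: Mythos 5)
Your overall architecture is sound, and several of your sub-arguments are genuinely different from the paper's and would work once completed: you obtain well-definedness of $\varrho$ by tracking the root inside the concrete graph $\graph(\omega)$ on $\ZZ$ and noting that relator-paths close up (the paper instead uses the two-to-one property of $\Graph$ together with Corollary \ref{Orbital-uniqueness} to show $\delta_v\omega=\omega$ on a dense set); your two-to-one argument and your measure-transfer argument for (d) are essentially the paper's.

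There is, however, one genuine gap: you never establish the inclusion $\psi(\varOmega_\tau)\subseteq X$, and you use it twice. Your surjectivity argument ("$\psi(\varOmega_\tau)$ is a compact, nonempty, $G$-invariant subset of $X$, hence equals $X$ by minimality") presupposes it, and your injectivity argument in (a) also needs it, since you pass through the factor map $\phi:X\to\partial\mathcal T$, which is only defined on $X$, and you need $\phi(\psi(\varOmega_\tau))$ to exhaust (or at least be dense in) $\partial\mathcal T$ before faithfulness applies. Recall that $X$ is defined as $\overline{\mathcal F(\partial\mathcal T)}$ with its isolated points removed; your observation that each $\graph(\omega)$ is a local limit of \emph{finite} Schreier graphs only places $\Graph(\omega)$ in $\overline{\mathcal F(V(\mathcal T))}$, and identifying the relevant part of that closure with $X$ is not free. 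This inclusion is precisely where the paper invests most of its effort (Claims 1 and 2 of its proof): one first shows $\Graph(\eta)=\mathcal F(1^\infty)$ via Lemma \ref{finite-connection}, deduces $\Graph(\{T^n\eta:n\ge 0\})\subseteq\mathcal F(G\cdot 1^\infty)\subseteq\mathcal F(\partial\mathcal T)$ from the intertwining relation, then uses minimality of $(\varOmega_\tau,T)$ and continuity of $\Graph$ to get $\Graph(\varOmega_\tau)\subseteq\overline{\mathcal F(\partial\mathcal T)}$, and finally checks that no point of $\Graph(\varOmega_\tau)$ is isolated. You need to supply an argument of this kind (and, having done so, you should reorder so that it precedes the injectivity step of (a), which otherwise forward-references (b)). A second, smaller point: your claim that "every relation in $G$ is satisfied" rests on the fact that the closedness of relator-paths is a local condition preserved under limits of labeled rooted graphs; this is true and standard, but since $G$ is not finitely presented it involves infinitely many such conditions and deserves an explicit sentence.
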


\begin{Remark} \label{Connection-Bon}
The proof of the theorem shows that   $G$  embeds  into the
topological full group  $[[T]]$ of $(\Omega_\tau,T)$,  as  the
action of the  generators $A,B,C,D$ on $\Omega_\tau$ can  be
represented locally as the action by $T^{\pm 1}$ and $T^0 = id$. In
this context we also mention a   recent  article  of Matte Bon
\cite{Bo}  showing that the group $G$  (and  other groups of
intermediate  growth introduced by the first  author  in
\cite{Gri80})  embed into the topological full group $[[\phi]]$ of a
minimal   subshift $\phi$ over a finite alphabet. While his approach
is different from ours it leads to the same subshift for the group
$G$.
\end{Remark}

\begin{proof} We will show (a) and (b) together.
Let us define
$$\varOmega_1:=\{ T^k \tau^n (a) : n\in \NN\cup \{0\},
0\leq k \leq 2^{n} -1\}\;\:\mbox{and}\;\:
\varOmega_2:= \{T^n \eta : n\in \NN\cup \{0\}\}.$$

Recall also the reflection $\widetilde{}$ on the set
$\varOmega_\tau$ from  Section \ref{section-reflection}, which maps
$\omega = ... \omega_{-1} \omega_0 |\omega_1 \omega_2...$ to
$\widetilde{\omega} = ... \omega_2 \omega_1 | \omega_0 \omega_{-1}
..$.
In the proof, we will use the  reflected version of the fixed point $\eta$ of the substitution $\tau$ given by   $\widetilde{\eta} : \{....,
-2,-1,0,1\}\longrightarrow \mathcal{A}, \widetilde{\eta} (n) = \eta
(2-n)$. Note that
$\widetilde{\eta}$ is a reflected version of $\eta$ which is
additionally shifted so that  $1$ belongs to its support. This
latter  property is needed as we can only associate rooted graphs to
words having $1$ in their support.

\textit{Claim 1.} The closures $ \overline{\varOmega_1}$ and
$\overline{\varOmega_2}$ are compact and the following equalities hold:
$$\overline{\varOmega_1} = \varOmega_\tau \sqcup \{T^n \eta : n\geq 0\}\sqcup
\{T^{-n} \widetilde{\eta} : n \geq 0\} \sqcup \varOmega_1 \:\; \mbox{and}\;\: \overline{\varOmega_2}= \varOmega_\tau \sqcup \varOmega_2.$$

\textit{Proof of the claim.} The sets in question are compact as
they are closed subsets of the compact $\word(\mathcal{A})$. It is
clear that the unions are disjoint. The equalities follow easily
from the minimality of $(\varOmega_\tau,T)$ (as it implies that every
word of $\mbox{Sub}_\tau$ appears in  $p^{(n)}$ for $n$ large enough
 and in  $\eta$).

\textit{Claim 2.} The following equalities hold:
$$\overline{\Graph (\varOmega_1)} = \Graph (\varOmega_\tau) \sqcup \Graph
(\varOmega_1) \sqcup \Graph (\varOmega_2) \;\: \mbox{and}\;\:
\overline{\Graph (\varOmega_2)} = \Graph (\varOmega_\tau) \sqcup
\Graph (\varOmega_2).$$

\textit{Proof of claim.} We only show the statement for
$\varOmega_1$, the statement for $\varOmega_2$ being even easier. As $\Graph$ is continuous and
$\overline{\varOmega_1}$ is compact, we have
$$\overline{\Graph (\varOmega_1)} = \Graph
(\overline{\varOmega_1}).$$ Now, the desired equality follows from
Claim 1 and the fact that the graphs associated to $\eta$ and to
$\widetilde{\eta}$ coincide. Disjointness of the sets in question is
clear and Claim 2 is proven.

Let us now  show that the range of the restriction of $\Graph$
to $\varOmega_\tau$ is $X$ and that the map
$$\psi : \varOmega_\tau \longrightarrow X, \omega \mapsto \Graph
(\omega),$$ is two-to-one.
Recall that $X$ denotes the closure of $\mathcal{F} (\mathcal{T})$ in $\mathcal G_* (\mathcal{B})$  without its isolated points (see Section \ref{Section-Schreier}).
By continuity of the map $\mathcal{F}$ given in Lemma \ref{F-continuous}  we clearly have
$$\mathcal{F} (\partial \mathcal T) \subset\overline{ \{ \mathcal F (x) : x \in \{0,1\}^\ast \} }.$$
Moreover, Lemma \ref{finite-connection}  implies that for
$x\in \{0,1\}^\ast$ with $|x|\geq 1$ there exists a $k\in\{0,\ldots,
2^{|x|-1} -1\}$ with
$$\mathcal F (x)  = \Graph ( T^k \tau^{|x|-1} (a) )$$
(as the graph underlying $\mathcal F (x)$ is exactly $\varGamma_{1^{|x|}}$ and the only choice left is the root).
Putting this together we infer
$$\mathcal{F} (\partial \mathcal T)
 \subset
\overline{\Graph (\varOmega_1)}.$$ By Claim 2, this implies
$\overline{\mathcal{F}  (\partial \mathcal T)} \subset \Graph
(\varOmega_\tau) \sqcup \Graph (\varOmega_1) \sqcup \Graph
(\varOmega_2).$ Clearly, the elements of $\Graph (\varOmega_1)
\sqcup \Graph (\varOmega_2)$ are isolated points and thus
$X\subset\Graph (\varOmega_\tau).$ Conversely, by the previous lemma
we obviously  have $\Graph (\eta) =
\mathcal{F}(1^\infty)\in\mathcal{F}  (\partial \mathcal T).$  The
$G$-invariance of $\partial \mathcal T$ then gives
$$ \Graph (\varOmega_2) = \Graph (\{T^n \eta : n\geq 0\}) \subset
\mathcal F (G \cdot 1^\infty) \subset \mathcal F (\partial \mathcal
T).$$ By  Claim 2 this implies $ \Graph (\varOmega_\tau) \subset
\overline{\Graph(\varOmega_2)} \subset \overline{\mathcal{F}
(\partial \mathcal T)}.$ The points in $\Graph (\varOmega_\tau)$ are
not isolated, as $(\varOmega,T)$ is minimal
This implies $\Graph (\varOmega_\tau) \subset X.$ Put together,
these considerations  give
$$ X = \Graph (\varOmega_\tau).$$

We next  show that the map $\psi$  is two-to-one. 
As we have just shown, for any $x\in X$  there exists an $\omega =
... \omega_{-1} \omega_0 |\omega_1 \omega_2... \in\varOmega_\tau$
with $\Graph (\omega) =x.$ Then,  $\widetilde{\omega}
\in\{a,x,y,z\}^\ZZ$ with
$$\widetilde{\omega} = ... \omega_2 \omega_1 | \omega_0 \omega_{-1} ...$$
belongs to $\varOmega_\tau$  by Proposition \ref{Inversion}, and
clearly satisfies $\Graph (\widetilde{\omega}) = x$ as well, by the
definition of $\Graph$. As there is exactly one $a$ among $\omega_0
\omega_1$, the two sequences $\omega$ and $\widetilde{\omega}$ are
different. This shows that any $x\in X$ has at least two inverse
images under $\Graph$.  Conversely, the Schreier graph $\Graph
(\omega)$  clearly determines the sequence $\omega$ up to one
overall reflection given by $\widetilde{}$ and the statement
follows.

We now show that the group $H$ generated by $A,B,C,D$ is isomorphic to $G$.
Consider the surjective map
$$\delta : \{a,b,c,d\}^\ast \longrightarrow H$$
defined by  $\delta_a := A, \delta_b := B, \delta_c := C, \delta_d
:=D$ and $\delta_v := \delta_{v_1}\circ ...\circ \delta_{v_n}$ for
$v_1\ldots v_n \in \{a,b,c,d\}^n$ with $n\in \NN$. We will show that
$\delta$ induces a group isomorphism from $G$ to $H$.

By definition of $A,B,C,D$,for every
$s\in\{a,b,c,d\}$ the equality
$$ (*)\;\:\; \Graph (\delta_s \omega) = s \;  \Graph (\omega).$$
holds. This implies
$$\Graph  (\delta_v\omega) = v_1\cdot ...\cdot  v_n \;  \Graph (\omega)$$
whenever  $v_1\ldots v_n \in \{a,b,c,d\}^n$ for some $n\in
\NN\cup\{0\}$.

\textit{Claim 3.} For any $\omega \in \varOmega_\tau$ with $T^p
\omega \neq \omega^{(s)}$ for all $s\in\{x,y,z\}$ and $p\in \ZZ$ we
have
$$\Graph^{-1} ( v \; \Graph (\omega)) \cap \{T^k \omega : k\in \ZZ\} =\{\delta_v \omega\}$$
for all $v = v_1\ldots v_n \in \{a,b,c,d\}^n$.

\textit{Proof of the claim.} We already know that the two inverse
images of $\Graph$ of an element  of $X$ differ by a reflection
$\widetilde{}$. The claim then follows  from Corollary
\ref{Orbital-uniqueness}.

\textit{Claim 4.} For any  $v = v_1\ldots v_n \in \{a,b,c,d\}^n$,
 $\delta_v = id$ if and only if $v_1\cdot \ldots \cdot v_n = e\in G$.

\textit{Proof of the claim.} $\Longrightarrow$: We have $\delta_v =
id$. By $\Graph (\delta_v \omega) = v \; \Graph (\omega)$  we can
write
$$ \Graph(\omega) = \Graph (\delta_v \omega) = v \;  \Graph (\omega)$$
for any $\omega\in\varOmega_\tau$. This shows that $v$ acts as the
identity on $X$. Invoking  the factor map $\phi : X\longrightarrow
\partial \mathcal T$ we infer that the  action of $v$ on $\partial \mathcal
T$ is the identity as well. As  $G$ acts faithfully on $\partial
\mathcal T$, we conclude   $v = e\in G$.

$\Longleftarrow$: Assume $v =v_1\cdot \ldots \cdot v_n = e\in G$.
By Claim 3 we have
$$ \{\delta_v \omega\} = \Graph^{-1} ( v \Graph (\omega)) \cap \{ T^n \omega : n\in \ZZ\} = \{\omega\} $$
for all $\omega\in \varOmega_\tau$ with $T^p \omega \neq
\omega^{(s)}$ for all $s\in\{x,y,z\}$ and $p\in \ZZ$. This shows
$\delta_v = id$ on a dense set in $\varOmega_\tau$ and $\delta_v =
id$ follows.

From Claim 4 and the definition of $\delta$ we directly obtain that
there is a group isomorphism $\varrho : G\longrightarrow H$ with
$\varrho(a) =A$, $\varrho(b) = B$, $\varrho(c) =C$ and $\varrho (d)
=  D$.

This completes the proof of statements (a) and (b).
The statement (c) is a direct consequence of the definition of
$A,B,C,D$.
Finally, the statement (d) follows easily by considering cylinder sets around the origin of the form
$$\{\omega \in
\varOmega :  \omega (m)\ldots \omega (m+ |v| -1) = v\}$$ for $v\in
\mathcal W_\tau$ and $m\in \ZZ$ with $m< 0$ and $m + |v|-1 >0$, and
noting that the generators of $G$ act on such sets either as
identity or as $T$ or as $T^{-1}$.
\end{proof}

\section{Application to spectral theory of the Laplacians associated to the Schreier graphs}\label{Main-results}
In this section we will bring the results of all the previous
sections together in order to treat the spectral theory of the
operators $M_x (t,u,v,w)$, $x\in X$. We will  use Theorem
\ref{Main-connection} to show that each of these  operators  is
unitarily equivalent to  an  operator from a  family  $H_\omega$,
${\omega\in\varOmega_\tau}$, of Schroedinger operator with aperiodic
order. Spectral properties of Schroedinger operators with aperiodic
order in turn were  heavily studied in the past twenty five years or
so (see the introduction for further details and references). We
will then use the combinatorial results  of Section
\ref{The-substitution} and the known theory  of such Schroedinger
operators to describe  spectral properties of the $H_\omega$,
$\omega\in\varOmega_\tau$  and hence the $M_x (t,u,v,w)$, $x\in X$.



Let four parameters  $t,u,v,w\in \RR$ be given. Set $D := u + v + w$
and define $ f: \varOmega_\tau\longrightarrow \RR$ and $g
:\varOmega_\tau \longrightarrow \RR$ by
$$ f(\omega) := \left\{
\begin{array}{ccc} t &:&
\omega_0 = a \\   D - w &:& \omega_0 = x\\ D - v &:& \omega_0 = y\\
D- u & : &  \omega_0 = z\end{array} \right.\;\:\;\mbox{and}\;\:
g(\omega) := \left\{
\begin{array}{c}  w : \omega_{-1} \omega_0 \in\{ ax, xa\} \\ v :
\omega_{-1}  \omega_0 \in \{ay,ya\}\\
u : \omega_{-1} \omega_0 \in  \{az,za\}
\end{array} \right..$$
For us the actual definition of these functions is not that
important. What matters is that theses functions are \textit{locally
constant} (i.e. their values at $\omega$ depond only on $\omega
(-N)\ldots \omega (N)$ for some fixed natural number $N$). Moreover,
we will use  that these functions are not periodic if $u = v = w$
does not hold (as follows directly from  Proposition
\ref{Proposition-Shift-non-periodic}). We will need the set
$$\mathcal{P}:=\{(t,u,v,w) \in \RR^4 : t\neq 0, u+ v \neq 0, u + w
\neq 0, v + w \neq 0\}$$ as $f$ does not vanish anywhere if and only
if the parameters $(t,u,v,w)$ belong to $\mathcal{P}$.

Define for each $\omega\in\varOmega_\tau$ the selfadjoint bounded
operator $H_\omega :\ell^2 (\ZZ)\longrightarrow \ell^2 (\ZZ)$ via
$$(H_\omega \varphi) (n) = f(T^{n-1}\omega) \varphi (n-1) + f (T^n
\omega) \varphi (n+1) + g(T^n \omega) \varphi (n)$$ for all $\varphi
\in \ell^2 (\ZZ)$.

\begin{Proposition}\label{unitary-equivalence}
Let $(t,u,v,w) \in \RR^4$ be given. Consider  $x \in X$. Then, there
exists an  $\omega\in \varOmega_\tau$  such that  $H_\omega$ is
unitarily equivalent to $M_x (t,u,v,w)$.
\end{Proposition}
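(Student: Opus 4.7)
The plan is to exhibit the unitary equivalence via the factor map $\psi$ from Theorem \ref{Main-connection}, by showing that $H_\omega$ is literally the Laplacian of $\graph(\omega)$ after the obvious identification of vertices with $\ZZ$. Given $x \in X$, use surjectivity of $\psi:\varOmega_\tau \to X$ to choose $\omega \in \psi^{-1}(x)$, so that $\Graph(\omega) = [\graph(\omega)] = x$. Since every $\omega \in \varOmega_\tau$ is a two-sided infinite word with $\mbox{supp}(\omega)=\ZZ$, the vertex set of $\graph(\omega)$ is exactly $\ZZ$, and $\graph(\omega)$ is a labeled rooted graph isomorphic to any chosen representative $\widehat{x}$ of $x$. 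Any such isomorphism induces a unitary $U:\ell^2(V(\widehat{x}))\to\ell^2(\ZZ)$ intertwining $M_x(t,u,v,w)$ with $M_{\graph(\omega)}(t,u,v,w)$, so it suffices to prove that
\[
M_{\graph(\omega)}(t,u,v,w) = H_\omega
\]
as operators on $\ell^2(\ZZ)$.

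The core step is therefore a direct, case-by-case verification from the definition of $\graph$ in Section 3.2. Evaluate $(M_{\graph(\omega)}(t,u,v,w)\varphi)(n)$ by summing over the edges emanating from $n$. The edge(s) between $n$ and $n+1$ depend only on $\omega(n)$: if $\omega(n)=a$ there is a single $a$-edge of weight $t$; if $\omega(n)=x$ there are two edges labeled $b,c$ of combined weight $u+v$; and analogously $u+w$ for $y$ and $v+w$ for $z$. These are exactly the values of $f(T^n\omega)$, giving the $\varphi(n+1)$-coefficient, and the same analysis applied to the edges between $n-1$ and $n$ (which are determined by $\omega(n-1)$) gives the $\varphi(n-1)$-coefficient $f(T^{n-1}\omega)$. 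For the diagonal: since in $\varOmega_\tau$ exactly one of $\omega(n-1),\omega(n)$ is $a$, exactly one loop is attached at vertex $n$ by the construction, and inspection of the three cases $\omega(n-1)\omega(n) \in \{ax,xa\}$, $\{ay,ya\}$, $\{az,za\}$ shows its label is $d$, $c$, $b$ with weights $w$, $v$, $u$ respectively; this matches $g(T^n\omega)$ exactly.

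There is no boundary contribution to worry about because the bi-infinite word $\omega$ has no minimal or maximal element in its support, so the extra $b,c,d$-loops attached at endpoints in the definition of $\graph$ never arise. Combining the three contributions, we obtain
\[
(M_{\graph(\omega)}(t,u,v,w)\varphi)(n)=f(T^{n-1}\omega)\varphi(n-1)+f(T^n\omega)\varphi(n+1)+g(T^n\omega)\varphi(n),
\]
which is precisely $(H_\omega\varphi)(n)$. Composing with the unitary $U$ coming from the graph isomorphism between $\widehat{x}$ and $\graph(\omega)$ yields $U M_x(t,u,v,w) U^* = H_\omega$, completing the proof.

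The main obstacle is nothing deep but rather the bookkeeping in the case analysis: one has to be careful that each edge (and in particular each loop) of $\graph(\omega)$ is counted exactly once in the Laplacian sum, and that the correspondence between letters $x,y,z$ and the pairs $\binom{b}{c},\binom{b}{d},\binom{c}{d}$ lines up consistently with the definitions of $f$ and $g$. Once this bookkeeping is done cleanly the identification $M_{\graph(\omega)}=H_\omega$ is tautological.
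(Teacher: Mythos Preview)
Your proof is correct and follows essentially the same approach as the paper: both invoke the surjectivity of $\psi$ from Theorem~\ref{Main-connection} to produce $\omega$ with $\Graph(\omega)=x$, identify the vertex set of $\graph(\omega)$ with $\ZZ$ via the obvious bijection, and then verify by direct computation that the Laplacian on $\graph(\omega)$ coincides with $H_\omega$. Your write-up is in fact more explicit than the paper's, which leaves the case analysis as ``a direct calculation''; your added remark that no boundary loops arise because $\omega$ has full support is a nice clarification.
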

\begin{proof}  By the surjectivity of $\psi$ in Theorem \ref{Main-connection},
there exists an $\omega\in\varOmega_\tau$ with $\psi (\omega) = x$.
By the definition of $\psi$, the graph $\graph (\omega)$ is then a
representative of $x$ (recall that $x$ is an isomorphism class of
graphs). Denote its set of vertices by $V$. By the definition of
$\graph$ there exists a map $j:  V \longrightarrow \ZZ$ such that
the edges emanating from $p\in V $ are described by
$\omega_{j(p)-1}\omega_{j(p)}$. The map $j$ gives rise to  the
unitary map
$$U : \ell^2 (\ZZ)\longrightarrow \ell^2 (V), \varphi \mapsto \varphi
\circ j.$$  Via this unitary map the operator $M_x (t,u,v,w)$ is
converted to the operator $$\widetilde{H}_x :=U^{-1} M_x (t,u,v,w) U
$$ on $\ell^2 (\ZZ).$
 For $\varphi \in \ell^2 (V)$,  the value of
$M_x (t,u,v,w) \varphi (p)$ at a $p\in V$ is determined by the
(weights of the) edges emanating from $p$ (and by $\varphi$).  As
these edges are encoded by $\omega_{j(p)-1} \omega_{j(p)}$ the value
of $(\widetilde{H}_x U^{-1} \varphi) (j(p))$ is then encoded by the
values of $\omega_{j(p)-1} \omega_{j(p)}$ as well. The exact
dependance on $\omega_{j(p) -1} \omega_{j(p)}$  follows from an
direct calculation and yields $\widetilde{H}_x = H_\omega$.
\end{proof}

Combining the previous proposition  with known  results on Cantor
spectrum of Lebesgue measure zero, we obtain the following result.

\begin{Theorem}[Cantor spectrum] \label{Main-Laplacian-Cantor}
 Let  $(t,u,v,w)\in\mathcal{P}$ such that  $u = v = w$ does not hold. Then,
  there exists a Cantor set  $\Sigma$ of
Lebesgue measure zero with $\Sigma = \sigma (M_x)$ for all $x\in X$.
\end{Theorem}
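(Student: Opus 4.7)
The plan is to reduce the spectral statement for $M_x(t,u,v,w)$ to the corresponding statement for the Schroedinger family $(H_\omega)_{\omega\in\varOmega_\tau}$ via Proposition \ref{unitary-equivalence}, and then invoke the by-now standard machinery for one-dimensional Jacobi operators associated to minimal aperiodic subshifts. Since every $x\in X$ has at least one $\omega\in\varOmega_\tau$ with $\psi(\omega)=x$, and since Proposition \ref{unitary-equivalence} gives $\sigma(M_x(t,u,v,w))=\sigma(H_\omega)$, it is enough to produce a single Cantor set $\Sigma$ of Lebesgue measure zero which equals $\sigma(H_\omega)$ for every $\omega\in\varOmega_\tau$.

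First I would establish that $\sigma(H_\omega)$ is in fact independent of $\omega$. The functions $f$ and $g$ are locally constant, so $\omega\mapsto H_\omega$ is strongly continuous, and the usual argument (equivariance of the family under the shift, combined with strong-operator continuity and minimality of $(\varOmega_\tau,T)$, which is a consequence of Theorem \ref{Theorem-tau-linear-repetitive}) yields a common spectrum $\Sigma$. The hypothesis $(t,u,v,w)\in\mathcal{P}$ is used precisely here to guarantee that $f$ never vanishes, so that $H_\omega$ is a genuine nondegenerate Jacobi operator to which the standard theory applies.

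Next I would show $|\Sigma|=0$ by invoking the general results on Schroedinger/Jacobi operators with aperiodic order surveyed in \cite{Dam,DEG}. The two required combinatorial inputs are already in hand: linear repetitivity of $(\varOmega_\tau,T)$ from Theorem \ref{Theorem-tau-linear-repetitive}, and aperiodicity of the coded sequences $(f(T^n\omega),g(T^n\omega))_{n\in\ZZ}$. Aperiodicity is exactly where the hypothesis \emph{$u=v=w$ does not hold} enters: under this assumption the coding map $\mathcal{A}\to\RR^2$ identifying the pairs $(f(T^n\omega),g(T^n\omega))$ does not collapse $\{x,y,z\}$ to a single value, so Proposition \ref{Proposition-Shift-non-periodic} guarantees the coded subshift remains aperiodic and linearly repetitive. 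Under these two hypotheses, the combination of Kotani's theorem (ruling out absolutely continuous spectrum for ergodic families whose potentials take finitely many values and are aperiodic) with the general zero-measure-spectrum result for linearly repetitive subshifts yields $|\Sigma|=0$.

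Finally I would conclude that $\Sigma$ is a Cantor set. Being closed and of zero Lebesgue measure, $\Sigma$ is automatically nowhere dense. Perfectness follows because on a minimal aperiodic subshift the spectrum cannot contain isolated points: an isolated point would be an eigenvalue of infinite multiplicity via the equivariant structure, contradicting $\dim\ell^2(\ZZ)=\aleph_0$ together with minimality. Thus $\Sigma$ is a perfect, closed, nowhere dense subset of $\RR$, i.e., a Cantor set. The main obstacle is the proper bookkeeping of the existing Schroedinger-with-aperiodic-order theorems in our Jacobi setting where both the diagonal $g$ and off-diagonal $f$ are sampled from the same subshift; once the translation is made, every combinatorial ingredient has already been supplied by Section \ref{The-substitution} and Theorem \ref{Main-connection}.
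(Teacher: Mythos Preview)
Your overall strategy coincides with the paper's: reduce $M_x$ to $H_\omega$ via Proposition \ref{unitary-equivalence}, then feed the two combinatorial facts about $(\varOmega_\tau,T)$ (linear repetitivity from Theorem \ref{Theorem-tau-linear-repetitive}, aperiodicity of the coded sequences from Proposition \ref{Proposition-Shift-non-periodic} under $u=v=w$ failing) together with $f\neq 0$ into the existing Schroedinger/Jacobi machinery. The paper does exactly this, but in one step: it invokes \cite{BP} (extending \cite{Lenz} to the Jacobi case), which delivers constancy of the spectrum, Lebesgue measure zero, \emph{and} the Cantor property simultaneously for locally constant aperiodic $f,g$ over a linearly repetitive subshift with $f$ nowhere vanishing.

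The one place where your write-up has a genuine gap is the separate argument for perfectness. Your claim that an isolated spectral point would force an eigenvalue of \emph{infinite multiplicity} is not correct: for a one-dimensional Jacobi operator with nonvanishing off-diagonal, every eigenvalue has multiplicity at most one, and the covariance relation $U H_\omega U^{-1}=H_{T\omega}$ relates eigenfunctions of \emph{different} operators, not of the same $H_\omega$. The absence of isolated points in this setting is instead obtained (as in \cite{Lenz,BP}) through the description of the spectrum via the set where the Lyapunov exponent vanishes, combined with the uniform convergence of transfer-matrix norms that linear repetitivity provides. So either cite \cite{BP} for the full Cantor-spectrum-of-measure-zero statement, as the paper does, or replace your multiplicity argument by the Lyapunov-exponent route; the rest of your proof is fine and matches the paper's.
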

\begin{proof} By Corollary 6.6 of  \cite{BP} (see Corollary 2.3 in   \cite{Lenz}
for the case $f\equiv 1$ as well)   a family of  Schroedinger
operators associated to a linearly repetitive  subshift has Cantor
spectrum of Lebesgue measure zero   provided the underlying
functions $f$ and $g$ are locally constant and not periodic and $f$
does not vanish. (Corollary 6.6 even allows for a weakening of
linear repetitivity.) Now, the subshift $(\varOmega_\tau,T)$ is
linearly repetitive by Theorem \ref{Theorem-tau-linear-repetitive}
and clearly $f$ does not vanish if the parameters belong to the set
$\mathcal{P}$. Moreover, as discussed already at the beginning of
this section  $f$ and $g$ are locally constant and
 they are not periodic if $u = v = w$ does not hold. So, from
\cite{BP} we obtain that there exists a Cantor set $\Sigma$ with
$\sigma (H_\omega) = \Sigma$ for all $\omega \in \varOmega_\tau$.
From Proposition \ref{unitary-equivalence}, we then obtain the
desired result.
\end{proof}

\begin{Remark}  The case $u = v = w$ is  treated in \cite{BG} and an
explicit description of the spectrum (in terms of the values of $u$
and $t$) can be found there.  From \cite{BG} it is also already
known that the spectrum is independent of $x\in X$ (where  arbitrary
values of $t,u,v,w$ are fixed).

\end{Remark}


We can also use the so-called Gordon Lemma  (after \cite{Gor}) from
the theory of Schroedinger operators with aperiodic order  to
exclude eigenvalues. The Gordon Lemma does not seem to be in the
literature in the generality needed here. So we  briefly discuss
some  details next.

\begin{Lemma}[Gordon Lemma]\label{l:Gordon} Let $(f_n)$  and
$(g_n)$ be two bounded sequences in $\RR$ with $f_n \neq 0$ for all
$n\in\ZZ$. Consider the self-adjoint operator $H$ acting on $\ell^2
(\ZZ)$ via
$$(H \varphi) (n) = f_n \varphi (n+1) + f_{n-1} \varphi (n-1) + g_n
\varphi (n).$$ If there exists a sequence of natural numbers $L_k\to
\infty$ such that for every $k$
$$(f_n,g_n) = (f_{n-L_k}, g_{n-L_k}) = (f_{n + L_k}, g_{n +
L_k})$$ with $n=1,\ldots, L_k$, then the operator $H$ does not have
any eigenvalues.
\end{Lemma}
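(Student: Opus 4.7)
The plan is to argue by contradiction, using the classical transfer matrix / Cayley--Hamilton approach. Suppose $\varphi \in \ell^2(\ZZ)$ is a nontrivial eigenfunction of $H$ with eigenvalue $E$. The three-term recurrence $H\varphi = E\varphi$ reads
$$f_n \varphi(n+1) = (E - g_n)\varphi(n) - f_{n-1}\varphi(n-1),$$
which (assuming $f_n \neq 0$, as is the case in the intended application) is equivalent to $\Phi(n+1) = A_n(E)\Phi(n)$ where $\Phi(n) := (\varphi(n),\varphi(n-1))^T$ and
$$A_n(E) := \begin{pmatrix} (E - g_n)/f_n & -f_{n-1}/f_n \\ 1 & 0 \end{pmatrix}, \qquad \det A_n(E) = f_{n-1}/f_n.$$
I would then fix $k$ and set $M_k := A_{L_k}(E) A_{L_k - 1}(E)\cdots A_1(E)$, the transfer matrix over the middle block $[1, L_k]$.

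The next step is to extract everything from the three-block periodicity. The identity $(f_n,g_n) = (f_{n-L_k},g_{n-L_k}) = (f_{n+L_k},g_{n+L_k})$ for $n = 1,\ldots,L_k$ says that the coefficients on the left block $[-L_k+1, 0]$ and on the right block $[L_k+1, 2L_k]$ coincide with those on the middle block. Hence $M_k$ is simultaneously the transfer matrix over all three blocks, giving, with $u := \Phi(1)$,
$$\Phi(-L_k + 1) = M_k^{-1} u, \qquad \Phi(L_k + 1) = M_k u, \qquad \Phi(2L_k + 1) = M_k^2 u.$$
Telescoping $\det M_k = \prod_{n=1}^{L_k} f_{n-1}/f_n = f_0/f_{L_k}$, and using the boundary case $n = L_k$ of the periodicity to conclude $f_0 = f_{L_k}$, one obtains $M_k \in SL_2(\RR)$. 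This unimodularity is the real payoff of the \emph{three-block} structure (as opposed to two blocks), since Cayley--Hamilton in $SL_2$ gives the clean relations $M_k^2 + I = \tau_k M_k$ and $M_k + M_k^{-1} = \tau_k I$ with $\tau_k := \operatorname{tr} M_k$.

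Applied to $u$, these two relations read
$$\Phi(L_k + 1) + \Phi(-L_k + 1) = \tau_k\, u, \qquad u + \Phi(2L_k + 1) = \tau_k\, \Phi(L_k + 1).$$
Since $\varphi \in \ell^2$ implies $\|\Phi(n)\| \to 0$ as $|n| \to \infty$, the left-hand sides of both identities tend to zero as $k \to \infty$. The first identity then forces $|\tau_k|\,\|u\| \to 0$; substituting into the second, $\|u\| \leq |\tau_k|\,\|\Phi(L_k+1)\| + \|\Phi(2L_k+1)\| \to 0$, so $u = \Phi(1) = 0$. Iterating the recurrence both forwards and backwards (using $f_n \neq 0$) propagates this to $\varphi \equiv 0$, contradicting nontriviality. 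The main technical point I expect to be careful about is the boundary computation $f_0 = f_{L_k}$ giving $M_k \in SL_2$, since without it the Cayley--Hamilton identities acquire a $\det M_k$ that spoils the norm estimates; once this is secured, the argument is the standard Gordon three-block cancellation.
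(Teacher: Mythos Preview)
Your approach is the same three-block transfer-matrix/Cayley--Hamilton route the paper takes, but there is a subtle boundary gap coming from your choice of transfer vector. Your one-step matrix $A_n(E)$ depends on $(g_n,f_n,f_{n-1})$, so the left-block product $A_0 A_{-1}\cdots A_{-L_k+1}$ involves $f_{-L_k}$ (through the factor $A_{-L_k+1}$). The hypothesis $(f_n,g_n)=(f_{n-L_k},g_{n-L_k})$ for $n=1,\ldots,L_k$ pins down $f_m$ only for $m=-L_k+1,\ldots,0$ on the left; it says nothing about $f_{-L_k}$. Thus you cannot conclude $A_{-L_k+1}=A_1$ (that would need $f_{-L_k}=f_0$), and your claimed identity $\Phi(-L_k+1)=M_k^{-1}u$ does not follow. (The right block is fine: the extra endpoint there is $f_{L_k}=f_0$, which you correctly extract from the case $n=L_k$ of the first equality.)

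The paper avoids this leak by passing to the modified vector $\widetilde{u}_n:=(u(n+1),\,f_n u(n))^T$. Its one-step matrix
\[
\tilde A_n=\begin{pmatrix}(E-g_n)/f_n & -1/f_n\\ f_n & 0\end{pmatrix}
\]
depends only on $(f_n,g_n)$ at the \emph{same} index $n$ and has determinant $1$ for every $n$. With this normalization the block-periodicity hypothesis matches the three block transfers exactly, no boundary term escapes, and your Cayley--Hamilton step then goes through verbatim (the paper phrases the endgame as the $\tfrac14$-inequality $\max\{\|\widetilde u_{2L_k}\|,\|\widetilde u_{L_k}\|,\|\widetilde u_{-L_k}\|\}\geq\tfrac14\|\widetilde u_0\|$ rather than via your two displayed identities, but these are equivalent). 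Your telescoping computation $\det M_k=f_0/f_{L_k}=1$ is correct but becomes unnecessary once the better normalization is in place.
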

\begin{proof}
In the case $f\equiv 1$ this is well-known, see e.g. the review
\cite{Dam}. The proof presented there can be adapted to our more
general case:  Choose $E\in\RR$ arbitrarily and consider  functions
$u : \ZZ\longrightarrow \CC$ with $ f_n u (n+1) + f_{n-1} u (n-1) +
(g_n - E ) u (n) =0 $ for all $n\in\ZZ$. Define for such a function
and each $n\in \ZZ$
$$\widetilde{u}_n :=\left(\begin{matrix} u(n+1) \\ f_n u(n)
\end{matrix}\right)\in\CC^2.$$ Let  the \textit{transfer matrix} $\tilde{M}_n$ be the
linear map which sends $\widetilde{u}_0 $ to $\widetilde{u}_n$ (for
any such $u$).

We have to show that  such a function  $u$ does not belong to
$\ell^2 (\ZZ)$ except if $u$ vanishes everywhere.

Note that   all $\tilde{M}_n$ have determinant equal to one (see
e.g. \cite{BP}). Given this, we can now continue as in the proof of
the usual Gordon lemma to conclude (for all $k\in\ZZ$)
$$\max\{ \|\widetilde{u}_{2 L_k}\|, \|\widetilde{u}_{L_k}\|,
\|\widetilde{u}_{-L_k}\|\} \geq \frac{1}{4} \|\widetilde{u}_0\|,$$
where $\|\cdot\|$ denotes the Euclidean norm on $\CC^2$. As $(f_n)$
is bounded and non-zero  and $L_k\rightarrow\infty$, the function
$u$ can not belong to $\ell^2$ (except if it vanishes everywhere).
\end{proof}

\begin{Corollary}\label{Absence-eigenvalues-abstract} Let $(\varOmega,T)$ be a subshift over a finite alphabet, $\omega\in\varOmega$.
Let $f,g :\varOmega\longrightarrow \RR$ be locally constant  such
that $f$ is nowhere zero. 
Assume that there exists a sequence of finite words $w_n$ with
$|w_n|\to\infty$ as well as non-empty prefixes $v_n$ of $w_n$ with
$|v_n|\to\infty$ such that
$$ \omega = ... w_n w_n  | w_n v_n...,$$
where $|$ denotes the position of the origin. Then, $H_\omega$ does
not have any eigenvalues.
\end{Corollary}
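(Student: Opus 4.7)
The plan is to apply (a variant of) Gordon's Lemma~\ref{l:Gordon} to $H_\omega$. First, using local constancy, I fix $N \in \NN$ such that $f(\omega')$ and $g(\omega')$ depend only on $\omega'(-N), \ldots, \omega'(N)$. Writing $L_n := |w_n|$ and $\ell_n := |v_n|$, the hypothesis $\omega = \ldots w_n w_n | w_n v_n \ldots$ together with the fact that $v_n$ is a prefix of $w_n$ gives $\omega(j) = \omega(j+L_n)$ for all $j \in [-2L_n+1, \ell_n]$; equivalently, $\omega$ is periodic of period $L_n$ on the contiguous interval $[-2L_n+1, L_n + \ell_n]$. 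Passing through the radius-$N$ windows, one deduces that the coefficients $f_j := f(T^j\omega)$ and $g_j := g(T^j\omega)$ satisfy $(f_j, g_j) = (f_{j+L_n}, g_{j+L_n})$ for every $j \in [N - 2L_n + 1, \ell_n - N]$.

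A direct application of Lemma~\ref{l:Gordon} with $L_k = L_n$ would require $[1, L_n]$ to lie inside this last interval, i.e.\ $L_n \leq \ell_n - N$, which fails in general since $\ell_n \leq L_n$. Instead I would re-run the Cayley--Hamilton step of its proof at the fixed position $N$. For $n$ large enough that $\ell_n \geq 2N$, the three disjoint consecutive blocks $B_1 := [N - 2L_n + 1, N - L_n]$, $B_2 := [N - L_n + 1, N]$, $B_3 := [N + 1, N + L_n]$ all lie in the interval $[N - 2L_n + 1, \ell_n - N]$ where $(f_j, g_j) = (f_{j+L_n}, g_{j+L_n})$, so the transfer matrix over each of $B_1, B_2, B_3$ is the same unimodular matrix $M = M_n$. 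Defining $\widetilde u_j := (u(j+1), f_j u(j))$ for a formal solution $u$ of $H_\omega u = E u$, one then has $\widetilde u_{N-L_n} = M \widetilde u_{N-2L_n}$, $\widetilde u_N = M \widetilde u_{N-L_n}$, and $\widetilde u_{N+L_n} = M \widetilde u_N$.

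The Cayley--Hamilton identity $M + M^{-1} = \operatorname{tr}(M) \, I$, applied to $\widetilde u_{N-L_n}$ and to $\widetilde u_N$, together with the standard split on whether $|\operatorname{tr}(M)| \geq 1$ or $|\operatorname{tr}(M)| < 1$ (exactly as in the proof of Lemma~\ref{l:Gordon}), yields the bound
\[
\|\widetilde u_N\| \leq 2 \max\bigl\{\|\widetilde u_{N-2L_n}\|,\ \|\widetilde u_{N-L_n}\|,\ \|\widetilde u_{N+L_n}\|\bigr\}.
\]
If $u \in \ell^2(\ZZ)$ were an eigenfunction, the right-hand side would tend to $0$ as $n \to \infty$, forcing $\widetilde u_N = 0$; since $f_N \neq 0$ this gives $u(N) = u(N+1) = 0$, and the reversibility of the three-term recurrence (guaranteed by $f$ never vanishing) propagates this to $u \equiv 0$.

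The one point I expect to require some care is ensuring that the three transfer matrices coincide exactly, not just asymptotically: this is precisely what the radius-$N$ buffer built into the interval $[N - 2L_n + 1, \ell_n - N]$ achieves, by forcing every coefficient that appears in $B_1$, $B_2$, or $B_3$ to depend on $\omega$ only through the periodic pattern on $[-2L_n + 1, L_n + \ell_n]$, not through the unknown continuation outside it.
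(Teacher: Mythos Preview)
Your strategy coincides with the paper's---reduce to a Gordon-type three-block argument after absorbing the locality radius $N$ of $f$ and $g$---and you are in fact more explicit than the paper, which (after a shift making $f,g$ depend only on positions $0,\dots,N$) simply asserts that the hypotheses of Lemma~\ref{l:Gordon} hold once $|v_n|\ge N$. There is, however, one incorrect sentence in your write-up: you claim that all three blocks $B_1,B_2,B_3$ lie in the coefficient-periodicity interval $[N-2L_n+1,\ell_n-N]$, but $B_3=[N+1,N+L_n]$ does \emph{not}, since its right endpoint $N+L_n$ exceeds $\ell_n-N$ whenever $\ell_n\le L_n$, which is always the case.

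This slip does not damage the argument. To conclude that the transfer matrices over $B_1$, $B_2$, $B_3$ coincide you only need $(f_j,g_j)=(f_{j+L_n},g_{j+L_n})$ for $j\in B_1$ (so that the matrix over $B_2=B_1+L_n$ equals that over $B_1$) and for $j\in B_2$ (so that the matrix over $B_3=B_2+L_n$ equals that over $B_2$). Hence it suffices that $B_1\cup B_2\subset[N-2L_n+1,\ell_n-N]$, and the rightmost point of $B_1\cup B_2$ is $N$, so this is exactly your stated condition $\ell_n\ge 2N$. With that correction the Cayley--Hamilton step and the conclusion go through as written.
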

\begin{proof} 
By assumption, $f,g$ are locally constant. After applying a suitable
shift we can then assume without loss of generality that there
exists an $N\in\NN$ such that the values of $f$ and $g$ only depend
on the positions $0,\ldots, N$. For sufficiently large values of $n$
we will have $|v_n|\geq N$. Thus, the sequences $k\mapsto
f(T^k\omega)$ and $k\mapsto  g(T^k \omega)$ will satisfy the
assumptions of the previous lemma. The lemma then gives the desired
statement.
\end{proof}

\begin{Theorem}[Absence of eigenvalues] \label{Main-Laplacian-Absence-eigenvalues} Assume $(t,u,v,w)\in\mathcal{P}$ and $u = v = w$ does not hold.

(a) For almost every $x\in X$ the operator $M_x (t,u,v,w)$ does not
have eigenvalues. In particular, for $\mu$-almost every $\xi \in
\partial \mathcal T$ the operator $M_\xi$ does not have eigenvalues.

(b) For every $x\in X_1$ the operator $M_x$ does not have
eigenvalues.
\end{Theorem}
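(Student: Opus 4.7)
The plan is to convert each statement, via Proposition~\ref{unitary-equivalence}, into an absence-of-eigenvalues statement for operators $H_\omega$ on $\ell^2(\mathbb Z)$, and then apply the Gordon-type Corollary~\ref{Absence-eigenvalues-abstract}. The hypothesis $(t,u,v,w)\in\mathcal{P}$ guarantees that the coupling $f$ is nowhere zero, while $u=v=w$ failing guarantees (by Proposition~\ref{Proposition-Shift-non-periodic}) that $f,g$ are locally constant and non-periodic in a genuine sense; both features are exactly what the Gordon machinery demands, so the entire task reduces to producing the three-block decomposition $\omega=\ldots w_n w_n\,|\,w_n v_n\ldots$ required by the corollary.

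For part (a), I would first apply Lemma~\ref{Three-blocks-in-almost-every-omega}: for $\lambda$-almost every $\omega\in\varOmega_\tau$ there exist words $w_n$ with $|w_n|\to\infty$ and prefixes $v_n$ of $w_n$ with $|v_n|/|w_n|\to 1$ such that $\omega=\ldots w_n w_n\,|\,w_n v_n\ldots$ Here $\lambda$ is the unique $T$-invariant probability measure on $\varOmega_\tau$, which by Theorem~\ref{Main-connection}(d) coincides with the unique $G$-invariant measure. Corollary~\ref{Absence-eigenvalues-abstract} then yields that $H_\omega$ has no eigenvalues for $\lambda$-a.e.\ $\omega$. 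Pushing forward this full-measure set through the factor map $\psi:\varOmega_\tau\to X$, and invoking Proposition~\ref{unitary-equivalence} to identify $H_\omega$ with $M_{\psi(\omega)}(t,u,v,w)$, gives absence of eigenvalues for $\nu$-a.e.\ $x\in X$. Finally, I would push forward through the three-to-one factor $\phi:X\to\partial\mathcal{T}$ from Section~\ref{Section-Schreier}: since $\phi$ is one-to-one off the countable set $X_1$ (which has $\nu$-measure zero), the pullback of any $\mu$-null set is $\nu$-null, so the a.e.\ statement transfers to $\mu$-almost every $\xi\in\partial\mathcal T$.

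For part (b), the key observation is that $X_1$ is exactly the image under $\psi$ of the three $T$-orbits $\{T^p\omega^{(s)}:p\in\ZZ\}$, $s\in\{x,y,z\}$. This should follow from the description in Section~\ref{Section-Schreier}: the graphs in $X_1$ are obtained by gluing two copies of $\Gamma_{1^\infty}$ along the pair of edges $(b,c)$, $(b,d)$ or $(c,d)$—which under the recoding correspond to the letters $x,y,z$—with an arbitrary choice of root, matching the shift action on each orbit. So it suffices to verify the three-block decomposition for every shift $T^p\omega^{(s)}$. Lemma~\ref{Three-blocks-in-omega-s} provides it for $\omega^{(s)}$ itself in the explicit form $\omega^{(s)}=\ldots p^{(3n+k)}s\,p^{(3n+k)}s\,|\,p^{(3n+k)}s\,p^{(3n+k)}\ldots$; taking $w_n=p^{(3n+k)}s$ and $v_n=p^{(3n+k)}$ gives the required data. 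For a shifted version $T^p\omega^{(s)}$, the same decomposition centered at the origin of $\omega^{(s)}$ lies at position $-p$ of $T^p\omega^{(s)}$; as soon as $|w_n|>|p|$ the three-block pattern straddles the new origin, and after replacing $w_n,v_n$ by the trivially adjusted pair we recover the hypothesis of Corollary~\ref{Absence-eigenvalues-abstract}. Combined with Proposition~\ref{unitary-equivalence}, this gives absence of eigenvalues for every $x\in X_1$.

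The main technical nuisance—not really an obstacle—is the bookkeeping in the shift adjustment for part (b), together with the verification that $\psi^{-1}(X_1)$ is genuinely the union of those three orbits (so that one has an $\omega\in\varOmega_\tau$ of the desired form for \emph{every} $x\in X_1$, not just for one representative). Both points follow once one matches the description of $X_1$ in Section~\ref{Section-Schreier} with the construction of $\graph$ on $\omega^{(s)}$, which by Lemma~\ref{omega-x-y-z} and the definition of $\Graph$ produces precisely the graph obtained by gluing two copies of $\Gamma_{1^\infty}$ at the letter $s\in\{x,y,z\}$.
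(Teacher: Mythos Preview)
Your proposal is correct and follows the same route as the paper: reduce via Proposition~\ref{unitary-equivalence} to the operators $H_\omega$ and then feed the three-block structure into Corollary~\ref{Absence-eigenvalues-abstract}, using Lemma~\ref{Three-blocks-in-almost-every-omega} for part~(a) and Lemma~\ref{Three-blocks-in-omega-s} for part~(b). (The paper's printed proof in fact cites these two lemmas in the reversed order, which is evidently a slip; your pairing is the correct one, and the shift-adjustment you sketch in~(b) can be bypassed by observing that $H_{T^p\omega}$ is unitarily equivalent to $H_\omega$ via the translation on $\ell^2(\ZZ)$.)
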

\begin{proof}
The first statement of (a) with $H_\omega$ instead of $M_x
(t,u,v,w)$ can be shown by combining Corollary
\ref{Absence-eigenvalues-abstract} with Lemma
\ref{Three-blocks-in-omega-s}. The actual statement for the $M_x
(t,u,v,w)$ then follows from  Proposition \ref{unitary-equivalence}.
The last statement of (a) then follows from the discussion in
Section \ref{Section-Schreier}, which provided an almost everywhere
one-to-one map  $\phi : X\longrightarrow \partial \mathcal{T}$.

Part  (b) follows by combining Corollary
\ref{Absence-eigenvalues-abstract} with Lemma
\ref{Three-blocks-in-almost-every-omega} and Proposition
\ref{unitary-equivalence}.
\end{proof}

\begin{Remark}
 In this section we were concerned with $(t,u,v,w)\in \mathcal{P}$.
For  every $(t,u,v,w)\notin\mathcal{P}$ the operator in question can
be decomposed as a sum of finitely many
 finite dimensional operators each appearing with infinite
multiplicity. Thus, the  spectrum is pure point with finitely many
eigenvalues each with infinite multiplicity.
\end{Remark}

\section{Outlook}\label{Outlook}
Besides generalization of our results and further study of groups
with linear Schreier graphs, as described in the introduction, we
would like to discuss here various further alleys of research. We
plan to work on these issues in the future.

\textit{Absence of eigenvalues.} Our results show absence of
eigenvalues for 'most' of the operators $M_x (t,u,v,w)$, $x\in X$,
as well as for a few particularly interesting special cases.  It is
an open question whether this absence of eigenvalues actually holds
for all values of the parameters $x\in X$.

\textit{Other self-similar groups.} The group $G$ studied in this
paper belongs to the class of self-similar groups. In many cases,
self-similarity of a group action on a regular rooted tree leads to
a finite collection  of rules that allow one to construct
inductively the Schreier graphs $\{\Gamma_n\}_{n\geq 1}$ for the
action on the levels of the tree. Linearity of Schreier graphs was important in our approach, however, it is possible that
similar considerations can be carried out also for other
self-similar groups with Schreier graphs of higher complexity . This is especially interesting in relation with
the  spectral problem of Schreier graphs of other 'important'
self-similar groups like the Basilica group or the Hanoi Tower group
where only partial results exist so far.

\textit{Finer spectral properties.}  Various tools are available
nowadays to study finer Hausdorff properties of spectra of
Schroedinger operators associated with aperiodic order (see e.g.
\cite{DEG} for a recent survey). This machinery is (mostly) based on
showing that the spectrum is dynamically generated by some maps.
These maps arise via traces of periodic approximants and are called
'trace maps'. It is tempting to think that these ideas can be
applied to our example. In fact, this may tie in well with the way
how the infinite Schreier graphs are generated via approximation by
$\varGamma_n$'s.

\textit{Exploration of spectra via the map $F$.} Our approach to the
spectral properties of the Laplacian is very different from the
one in \cite{BG}. Their approach works via a map $F$
encoding the self-similar structure of the spectrum, and the
spectrum is then seen as a
the  intersection  of a line  and a certain $F$-invariant set
$\Delta$  in Euclidean space that has to be identified among all
$F$-invariant subsets. A similar method was also applied in
\cite{DDN} for spectral computations of an oriented Laplacian
related to the dimers model on the Schreier graphs of self-similar
groups.  It will be very interesting to see whether the knowledge of
spectral properties gained in the present paper can help to further
explore the approach to the Laplacian spectrum via the map $F$.
Also, it seems not unreasonable that  there is a connection between
the map $F$ and the approach to spectral properties via trace maps
mentioned in the previous point.

\end{document}